\numberwithin{equation}{section}
\theoremstyle{plain}
\newtheorem{thm}{\protect\theoremname}[section]
\theoremstyle{definition}
\newtheorem{defn}[thm]{\protect\definitionname}
\theoremstyle{remark}
\newtheorem{notation}[thm]{\protect\notationname}
\theoremstyle{remark}
\newtheorem{rem}[thm]{\protect\remarkname}
\theoremstyle{plain}
\newtheorem{prop}[thm]{\protect\propositionname}
\theoremstyle{plain}
\newtheorem{question}[thm]{\protect\questionname}
\theoremstyle{definition}
\newtheorem{problem}[thm]{\protect\problemname}
\theoremstyle{plain}
\newtheorem{cor}[thm]{\protect\corollaryname}
\theoremstyle{remark}
\newtheorem{claim}[thm]{\protect\claimname}
\theoremstyle{plain}
\newtheorem{lem}[thm]{\protect\lemmaname}
\theoremstyle{definition}
\newtheorem{example}[thm]{\protect\examplename}
\providecommand{\claimname}{Claim}
\providecommand{\corollaryname}{Corollary}
\providecommand{\definitionname}{Definition}
\providecommand{\examplename}{Example}
\providecommand{\lemmaname}{Lemma}
\providecommand{\notationname}{Notation}
\providecommand{\problemname}{Problem}
\providecommand{\propositionname}{Proposition}
\providecommand{\questionname}{Question}
\providecommand{\remarkname}{Remark}
\providecommand{\theoremname}{Theorem}
\begin{document}
\title{Deformations of Reproducing Kernel Hilbert Spaces on Homogeneous Subvarieties
of the Unit Ball}
\author{Yasin Watted}
\email{Yasin.watted@technion.ac.il}
\curraddr{Faculty of Mathematics, Technion--Israel Institute of Technology,
Haifa, Israel.}
\date{07/18/26}
\begin{abstract}
We study the relationships between a subvariety of the open unit ball
in the complex $d$-dimensional space $\mathbb{C}^{d}$, the reproducing
kernel Hilbert space (RKHS) obtained by restricting the Drury-Arveson
space to the variety, and its multiplier algebra.

We show that if two such RKHSs are almost isometrically isomorphic
as RKHSs, their multiplier algebras are likewise almost completely
isometrically isomorphic as multiplier algebras. In such cases, the
underlying varieties are almost automorphically equivalent. For tractable
homogeneous varieties, we further show that if the corresponding multiplier
algebras are almost completely isometrically isomorphic as multiplier
algebras or one variety is almost the image of the other under a unitary
transformation, then the associated RKHSs are almost isometrically
isomorphic as RKHSs.
\end{abstract}

\maketitle
\tableofcontents{}

\section{Introduction}

We study the relationships between a subvariety $V$ of the open unit
ball $B_{d}$ in the $d$-dimensional complex vector space $\mathbb{C}^{d}$,
the reproducing kernel Hilbert space (RKHS) $H_{V}:=\mathcal{H}_{d}^{2}\mid_{V}$
obtained by restricting the Drury-Arveson space $\mathcal{H}_{d}^{2}$
to the variety $V$, and its multiplier algebra $\mathcal{M_{\mathit{V}}\mathit{:=}\textrm{Mult\ensuremath{\left(H_{V}\right)}}}$.
Here and throughout this paper, $d$ denotes an arbitrary positive
integer and $\mathbb{C}^{d}$ is equipped with the standard Hermetian
inner product.

Davidson, Ramsey and Shalit, obtained a result stating that given
two subvarieties $V$ and $W$ of $B_{d}$, $W$ is the image of $V$
under an automorphism of the ball if and only if their multiplier
algebras $\mathcal{M_{\mathit{V}}}$ and $\mathcal{M_{\mathit{W}}}$
are completely isometrically isomorphic \cite[Theorem 4.5]{davidson2015operator}.
\begin{thm}
\label{thm:(isometric Davidson,-Ramsey-and Shalit)}(Davidson, Ramsey
and Shalit, (See \cite[Theorem 4.5 and Theorem 5.9]{davidson2015operator}
and \cite[Section 9]{davidson2011isomorphism})). Let $V$ and $W$
be two varieties in $B_{d}$, with $d<\infty$. Then, the following
are equivalent:
\end{thm}

\begin{enumerate}
\item $H_{V}$ and $H_{W}$ are isometrically isomorphic as RKHSs.
\item $\mathcal{M_{\mathit{V}}}$ and $\mathcal{M_{\mathit{W}}}$ are completely
isometrically isomorphic as multiplier algebras.
\item $W$ is the image of $V$ under an automorphism of the ball.
\end{enumerate}
In this paper, the main problem addressed is inspired by the work
\cite{ofek2021distance} of Ofek, Pandey, and Shalit. They showed,
roughly, that given a finite subset $X$, then a small deformation
of $X$, in the sense of the automorphism invariant Hausdorff distance
(See \cite[Definition 3.6]{ofek2021distance}), leads to a small deformation
of the corresponding reproducing kernel Hilbert spaces (RKHS) $H_{X}$
via a RKHS isomorphism, and vice versa, \cite[Section 5]{ofek2021distance}.
Moreover, they showed that a small deformation of $H_{X}$ via a RKHS
isomorphism leads to a small deformation via a multiplier algebra
isomorphism of its multiplier algebra $\mathcal{M_{\mathit{X}}}$,
and vice versa, \cite[Section 4]{ofek2021distance}. 

We show a similar result for tractable homogeneous varieties (See
Definitions \ref{Definition 1.4} and \ref{Definition 1.5} below).
In other words, for tractable homogeneous varieties we show that Theorem
\ref{thm:(isometric Davidson,-Ramsey-and Shalit)} remains stable
under small deformations. However, while we use the same tools in
\cite{ofek2021distance} to obtain deformations of RKHSs and multiplier
algebras, we use invertible linear transformations which act isometrically
on a fixed homogeneous variety (See \cite[Theorem 7.4 and Lemma 7.5]{davidson2011isomorphism}).
For convenience, we will use the following definition throughout the
paper:
\begin{defn}
We say that a linear map $A:\mathbb{C^{\mathit{d}}}\longrightarrow\mathbb{C^{\mathit{d}}}$
acts isometrically on a subset $X\subseteq\mathbb{C^{\mathit{d}}}$
(or is isometric on $X$) if for every $x\in X$, $\left\Vert Ax\right\Vert =\left\Vert x\right\Vert $.
\end{defn}

We, roughly, state our result for tractable homogeneous varieties
as follows:
\begin{thm}
Let $d\in\mathbb{N}$. Let $V$ be a tractable homogeneous variety
in $B_{d}$, with $\textrm{span}\left(V\right)=\mathbb{C}^{d}$. Let
$A:\mathbb{C}^{d}\longrightarrow\mathbb{C}^{d}$ be an invertible
linear map that acts isometrically on $V$. Set $W:=AV$. Then, the
following are equivalent: 
\end{thm}

\begin{enumerate}
\item $H_{W}$ is obtained by a small deformation of $H_{V}$ via a RKHS
isomorphism. 
\item $\mathcal{M_{\mathit{W}}}$ is obtained by a small deformation of
$\mathcal{M_{\mathit{V}}}$ via a multiplier algebra isomorphism.
\item $W$ is obtained by a small deformation of $V$ via a linear map that
acts isometrically on $V$.
\end{enumerate}
The class of tractable homogeneous varieties is a proper subclass
of the class of homogeneous varieties. But, it contains all the homogeneous
varieties in $B_{d}$ with $d\leq3$, all irreducible homogeneous
varieties, all unions of two irreducible homogeneous varieties, and
more (see Subsection \ref{subsec:Examples-of-tractable} for examples
and details):
\begin{defn}
\label{Definition 1.4} (See \cite[Section 7]{davidson2011isomorphism}
and Notation \ref{Notation 1.6}). Let $d,k\in\mathbb{N}$ and let
$M=\cup_{i=1}^{k}M_{i}$ be a union of linear subspaces of $\mathbb{C}^{d}$
s.t. $\sum_{i=1}^{k}M_{i}=\mathbb{C}^{d}$ . We say that $M$ is a
\textbf{tractable union of linear subspaces of $\mathbb{C}^{d}$},
if one of the following holds: 
\end{defn}

\begin{enumerate}
\item $M=\mathbb{C}^{d}$.
\item There exist $m\geq2$ ($m\in\mathbb{N}$) and a finite union $N=\cup_{i=1}^{m}\cup_{r=1}^{k_{i}}N_{i,r}$
of finite unions of linear subspaces of $\mathbb{C}^{d}$ s.t. the
following hold:
\begin{enumerate}
\item $M\subseteq N$.
\item For every $i\in\left\{ 1,\ldots,m\right\} $, $N_{i}:=\cup_{r=1}^{k_{i}}N_{i,r}$
is a tractable union of linear subspaces of $\textrm{span}N_{i}\cdot$
\item For every $i,j\in\left\{ 1,\ldots,m\right\} $, if $i\neq j$ then
$\left(\textrm{span}N_{i}\right)\cap\left(\textrm{span}N_{j}\right)=\left\{ 0\right\} \cdot$
\item For every $i\in\left\{ 1,\ldots,m\right\} $, $\dim\left(\textrm{span}N_{i}\right)\leq d-1$. 
\item For every invertible linear  transformation $A:\mathbb{C}^{d}\longrightarrow\mathbb{C}^{d}$,
if $A$ is isometric on $M$ then $A$ is isometric on $N$.
\end{enumerate}
\item $M\subseteq E\oplus N$, where $N\subseteq\mathbb{C}^{d}$ is a tractable
union of linear subspaces of $\textrm{span}N$ and $\left\{ 0\right\} \neq E\subseteq\mathbb{C}^{d}$
is a subspace, which is orthogonal to $\textrm{span}N$ s.t. for every
invertible linear  transformation $A:\mathbb{C}^{d}\longrightarrow\mathbb{C}^{d}$,
if $A$ is isometric on $M$ then $A$ is isometric on $E\oplus N$.
\end{enumerate}
\begin{defn}
(See \cite[Section 7]{davidson2011isomorphism}). \label{Definition 1.5}Let
$d\in\mathbb{N}$ and $V\subseteq B_{d}$ be a homogeneous variety
s.t. $\textrm{span}\left(V\right)=\mathbb{C}^{d}$. Suppose that $V=V_{1}\cup\cdot\cdot\cdot\cup V_{k}$,
for some $k\in\mathbb{N}$, is the decomposition of $V$ into irreducible
components. Set $M:=\cup_{i=1}^{k}\textrm{span}\left(V_{i}\right)$\textbf{.}
We say that $V$ is a \textbf{tractable homogeneous variety in} $B_{d}$,
if $M$ is equal to a tractable union of linear subspaces of\textbf{
$\mathbb{C}^{d}$.}
\end{defn}

\begin{notation}
\label{Notation 1.6}Throughout this paper, by a given (finite) union
of (linear) subspaces $M=\cup_{i=1}^{k}M_{i}$ of $\mathbb{C}^{d}$,
we mean that $k\in\mathbb{N}$ and each $M_{i}$ is a linear subspace
of $\mathbb{C}^{d}$ s.t. $M_{i}\nsubseteq M_{j}$ whenever $i\neq j$.
Furthure, if $M=\cup_{i=1}^{k}M_{i}$ is a union of subspaces of $\mathbb{C}^{d}$
and $E\subseteq\mathbb{C}^{d}$ is a subspace, which is orthogonal
to $\textrm{span}M$, then $E\oplus M:=\cup_{i=1}^{k}E\oplus M_{i}$.
\end{notation}

\begin{rem}
The notion of constructing tractable homogeneous varieties was introduced
in \cite[Section 7]{davidson2011isomorphism}. Definition \ref{Definition 1.4}
draws heavily on this notion of construction. In fact, the only fundamental
difference is that the present definition allows any homogeneous variety
that has an extension to a tractable homogeneous variety, in the following
sense, to be also a tractable homogeneous variety: Let $V$ be a homogeneous
variety in $B_{d}$. We say that $V$ has an extension to a tractable
homogeneous variety, if there is a tractable homogeneous variety $W$
that contains $V$ s.t. for every invertible linear  transformation
$A:\mathbb{C}^{d}\longrightarrow\mathbb{C}^{d}$, if $A$ is isometric
on $V$ then $A$ is isometric on $W$. This allows our main results
for homogeneous varieties to cover as many varieties as possible.
\end{rem}

For the general setting of varieties, we show that given a fixed variety
$V$ that contains at least two points, then a small deformation of
$H_{V}$ via a RKHS isomorphism leads to a small deformation of its
multiplier algebra $\mathcal{M_{\mathit{V}}\mathit{:=}\textrm{Mult\ensuremath{\left(H_{V}\right)}}}$
via a multiplier algebra isomorphism. Each one of these cases leads
to a small deformation of $V$ via a multiplier biholomorphism. This
continues a long standing research program on the isomorphism problem
for irreducible complete Pick algebras \cite{agler2000complete}.

In Subsection \ref{subsec:The-Main-Problem}, we formally state the
main problem of this research for the general setting of varieties.
In Subsection \ref{subsec:Main-results}, we present the main results
in the paper. In Subsection \ref{subsec:Background-on-isomorphism problem},
we introduce a short background for the isomorphism problem. For more
background on the isomorphism problem, see the works \cite{davidson2011isomorphism}
and \cite{davidson2015operator} by Davidson, Ramsey and Shalit. Also,
see a survey \cite{salomon2016isomorphism} by Salomon and Shalit
on the isomorphism problem for complete Pick algebras . Also a work
\cite{hartz2017isomorphism} by Hartz on the isomorphism problem for
multiplier algebras of Nevanlinna-Pick spaces. Also, see \cite{deb2024deformations}
and \cite{ahmed2025multiplier}.

In Subsection \ref{subsec:Preleminaries} below, we introduce a short
background on the Drury-Arveson space $\mathcal{H}_{d}^{2}$ and its
multiplier algebra $\mathcal{M}_{d}$. Also, we present the main definitions
and notations for the isomorphism problem. For more background on
the Drury-Arveson space and its multiplier algebra, see for example
\cite{arveson1998subalgebras}, \cite{hartz2023invitation} and \cite{hartz2025operator}.
For more background on the theory of reproducing kernel Hilbert spaces,
see for example \cite{aronszajn1950theory}.

\subsection{Preliminaries \label{subsec:Preleminaries}}

We say that a Hilbert space $H$ is a reproducing kernel Hilbert space
(RKHS) on a set $X$ if the following holds.
\begin{enumerate}
\item $H$ is a linear subspace of $\mathbb{C}^{X}$.
\item For every $x\in X$, the evaluation linear functional $\rho_{x}:H\rightarrow\mathbb{C}$
at $x$, which is given by 
\[
\rho_{x}\left(f\right)=f\left(x\right),\quad f\in H
\]
 is a bounded linear functional on the Hilbert space $H$.
\end{enumerate}
By Riesz theorem it follows that for every $x\in X$ there exists
a unique $k_{x}^{H}\in H$ such that for every $f\in H,$ 
\[
f\left(x\right)=\left\langle f,k_{x}^{H}\right\rangle \cdot
\]
Therefore, 
\[
H=\overline{\textrm{span}\left\{ k_{x}^{H}:x\in X\right\} }\cdot
\]
In particular, for every $y\in X$ , we have 
\[
k_{y}^{H}\left(x\right)=\left\langle k_{y}^{H},k_{x}^{H}\right\rangle ,\quad x\in X\cdot
\]
 For every $y\in X$, $k_{y}^{H}$ is called the \textbf{kernel funcion}
of $H$ at $y$. The function 
\[
k^{H}:X\times X\rightarrow\mathbb{C},
\]
 which is defined by 
\[
k^{H}\left(x,y\right)=\left\langle k_{y}^{H},k_{x}^{H}\right\rangle ,\quad\textrm{for all }x,y\in X
\]
 is called the \textbf{reproducing kernel} of $H$ on $X$.

Let $d\in\mathbb{N}$ and let $B_{d}$ be the open unit ball in $\mathbb{C}^{d}$.
Let
\[
k:B_{d}\times B_{d}\rightarrow\mathbb{C}
\]
 be the positive definite kernel given by
\[
k\left(x,y\right)=\frac{1}{1-\langle x,y\rangle},\quad\textrm{for all }x,y\in B_{d}\cdot
\]
 For every $x\in B_{d}$, let $k_{x}:B_{d}\rightarrow\mathbb{C}$
be the kernel function at $x$, which is given by 
\[
k_{x}\left(y\right)=k\left(y,x\right),\quad\textrm{for all }y\in B_{d}\cdot
\]
 The Drury-Arveson space $\mathcal{H}_{d}^{2}$ is the RKHS \cite{aronszajn1950theory}
of analytic funcions on the open unit ball $B_{d}$ of $\mathbb{C}^{d}$,
which is determined by $k$ and given by
\[
\mathcal{H}_{d}^{2}=\overline{\textrm{span}\left\{ k_{x}:x\in B_{d}\right\} }\cdot
\]

\begin{notation}
Let $n\in\mathbb{N}_{0}$.
\end{notation}

\begin{itemize}
\item For every $x\in\mathbb{C}^{d}$, set 
\[
x^{n}:=\begin{cases}
x^{\otimes n} & n\geq2\\
x & n=1\cdot\\
1 & n=0
\end{cases}
\]
\item For every non-empty subset $X\subseteq\mathbb{C}^{d}$, set 
\[
X^{n}:=\begin{cases}
\text{span\ensuremath{\mathit{\left\{ x^{n}:x\in X\right\} }}} & n\geq2\\
\textrm{span}\left(X\right) & n=1\\
\mathbb{C} & n=0
\end{cases}\cdot
\]
\end{itemize}
\begin{notation}
For every subspace $E\subseteq\mathbb{C}^{d}$, $\mathcal{F}_{s}\left(E\right)$
will be used to denote the infinite direct sum $\mathit{\oplus_{n=0}^{\infty}E^{n}}$;
i.e., $\mathcal{F}_{s}\left(E\right):=\oplus_{n=0}^{\infty}\mathit{E}^{n}$
is the \textbf{symmetric Fock space }over\textbf{ $E$}.
\end{notation}

Note that for every $y\in B_{d}$,
\[
k_{y}\left(x\right)=\frac{1}{1-\left\langle x,y\right\rangle }=\sum_{n=0}^{\infty}\left\langle x,y\right\rangle ^{n}=\sum_{n=0}^{\infty}\left\langle x^{n},y^{n}\right\rangle ,\quad x\in B_{d}.
\]
In fact, every $f\in\mathcal{H}_{d}^{2}$ has a power series of the
form 
\[
f\left(x\right)=\sum_{n=0}^{\infty}\left\langle x^{n},\xi_{n}\right\rangle ,\:x\in B_{d}.
\]
 where $\oplus_{n=0}^{\infty}\xi_{n}\in\mathcal{F}_{s}\left(\mathbb{C}^{d}\right)$.
The map 
\[
J:\mathcal{H}_{d}^{2}\longrightarrow\mathcal{F}_{s}\left(\mathbb{C}^{d}\right)
\]
 that is given by 
\begin{equation}
J\left(\sum_{n=0}^{\infty}\left\langle \cdot,\xi_{n}\right\rangle \right)=\left(\xi_{0},\xi_{1},\ldots\right),\;\sum_{n=0}^{\infty}\left\langle \cdot,\xi_{n}\right\rangle \in\mathcal{H}_{d}^{2},\label{Equation 2.1.1}
\end{equation}
where for every $n\in\mathbb{N}_{0}$ and $z\in B_{d}$, $\left\langle \cdot,\xi_{n}\right\rangle \left(z\right)=\left\langle z^{n},\xi_{n}\right\rangle _{\left(\mathbb{C}^{d}\right)^{n}}$,
is an anti-unitary that maps $\mathcal{H}_{d}^{2}$ onto the symmetric
Fock space $\mathcal{F}_{s}\left(\mathbb{C}^{d}\right)$ \cite{arveson1998subalgebras}.

If $f=\sum_{n=0}^{\infty}\left\langle \cdot,\xi_{n}\right\rangle \in\mathcal{H}_{d}^{2}$
is given, then for every $n\in\mathbb{N}_{0}$, $f_{n}$ will mostly
be used to denote the $n$-th homogeneous component of $f$; i.e.,
$f_{n}:=\left\langle \cdot,\xi_{n}\right\rangle $. Note that, since
$J$ is anti-unitary, we have 
\[
\left\Vert f\right\Vert _{\mathcal{H}_{d}^{2}}^{2}=\sum_{n=0}^{\infty}\left\Vert f_{n}\right\Vert _{\mathcal{H}_{d}^{2}}^{2}
\]
 and for every $n\in\mathbb{N}_{0}$, $\left\Vert f_{n}\right\Vert _{\mathcal{H}_{d}^{2}}=\left\Vert \xi_{n}\right\Vert _{\left(\mathbb{C}^{d}\right)^{n}}\cdot$
\begin{defn}
We say that a subset $V\subseteq B_{d}$ is a\textbf{ }$\textrm{\textbf{subvariety}}$
of $B_{d}$ (or a \textbf{variety} in $B_{d}$), if there exists a
non-empty subset $\mathit{F}\subseteq\mathcal{H}_{d}^{2}$ such that
\[
V=Z\left(F\right):=\left\{ x\in B_{d}:\forall f\in F,\quad f\left(x\right)=0\right\} \cdot
\]
\end{defn}

For any subvariety $V$ of $B_{d}$, set
\[
H_{V}:=\overline{\textrm{span}\left\{ k_{x}:x\in V\right\} }\cdot
\]
 Then, $H_{V}$ is a RKHS subspace of $\mathcal{H}_{d}^{2}$. 
\begin{rem}
To be precise, for every $f,g\in\mathcal{H}_{d}^{2}$, define 
\[
\left\langle f\mid_{V},g\mid_{V}\right\rangle _{\mathcal{H}_{d}^{2}\mid_{V}}:=\left\langle P_{H_{V}}\left(f\right),P_{H_{V}}\left(g\right)\right\rangle _{\mathcal{H}_{d}^{2}}\cdot
\]
 Then, $H_{V}$ is isometrically isomorphic to the RKHS $\mathcal{H}_{d}^{2}\mid_{V}$
on $V$, which is determined by the kernel $k\mid_{V\times V}$. In
fact, 
\[
\mathcal{H}_{d}^{2}\mid_{V}=H_{V}\mid_{V}=\overline{\textrm{span}\left\{ k_{x}\mid_{V}:x\in V\right\} }\cdot
\]
For the above reason, despite of the fact that by definition $H_{V}$
is a Hilbert subspace of $\mathcal{H}_{d}^{2}$, we will use $H_{V}$
to denote both the Hilbert subspace $\overline{\textrm{span}\left\{ k_{x}:x\in V\right\} }$
of $\mathcal{H}_{d}^{2}$ and the RKHS $\mathcal{H}_{d}^{2}\mid_{V}$
on $V$.
\end{rem}

By a \textbf{multiplier} of $\mathcal{H}_{d}^{2}$ we mean a complex-valued
$\textrm{function}$ $f:B_{d}\longrightarrow\mathbb{C}$ with the
property $f\cdot\mathcal{H}_{d}^{2}\subseteq\mathcal{H}_{d}^{2}$.
Let $\mathcal{M}_{d}$ denote the set of all multipliers of $\mathcal{H}_{d}^{2}$,
i.e. 
\[
\mathcal{M_{\mathit{d}}\mathit{:=}\textrm{Mult\ensuremath{\left(\mathcal{H}_{d}^{2}\right)}}}=\left\{ f:B_{d}\rightarrow\mathbb{C}:fg\in\mathcal{H}_{d}^{2}\quad\text{for all \ensuremath{\mathit{g\in\mathcal{H}_{d}^{2}}}}\right\} \cdot
\]
 Since $\mathcal{H}_{d}^{2}$ contains the constant function ${\bf 1}$,
it follows that $\mathcal{M}_{d}\subseteq\mathcal{H}_{d}^{2}$ (as
sets). Every $f\in\mathcal{M}_{d}$ defines a unique bounded multiplication
operator $M_{f}$ on $\mathcal{H}_{d}^{2}$, 
\[
M_{f}\left(g\right)=fg,\,g\in\mathcal{H}_{d}^{2}.
\]
 The multiplier norm of $f\in\mathcal{M}_{d}$ is defined by 
\[
\left\Vert f\right\Vert {}_{\mathcal{M}_{d}}=\left\Vert M_{f}\right\Vert \cdot
\]
This gives $\mathcal{M}_{d}$ a structure of an operator algebra identified
with $\left\{ M_{f}:f\in\mathcal{M}_{d}\right\} $ via the isometric
isomorphism 
\[
\mathcal{M}_{d}\rightarrow\left\{ M_{f}:f\in\mathcal{M}_{d}\right\} \subseteq B\left(\mathcal{H}_{d}^{2}\right),\hfill f\rightarrow M_{f}\cdot
\]
Since $1\in\mathcal{M}_{d}$, it follows that for every $f\in\mathcal{M}_{d}$,
\[
\left\Vert M_{f}\right\Vert \geq\left\Vert M_{f}\left(1\right)\right\Vert =\left\Vert f\right\Vert _{\mathcal{H}_{d}^{2}}\cdot
\]
 By \cite[Section 2]{arveson1998subalgebras}, we have
\[
M_{f}^{*}\left(k_{x}\right)=\overline{f\left(x\right)}k_{x},x\in B_{d}\cdot
\]
It follows that $\left\Vert f\right\Vert {}_{\infty}\leq\left\Vert M_{f}\right\Vert $.
Hence, $\mathcal{M}_{d}\subseteq H^{\infty}\left(B_{d}\right)$, where
$H^{\infty}\left(B_{d}\right)$ denotes the bounded analytic functions
on $B_{d}$. Moreover, $\left\{ M_{f},f\in\mathcal{M}_{d}\right\} $
is a WOT-closed operator subalgebra of $B\left(\mathcal{H}_{d}^{2}\right)$.
In particular, $\left\{ M_{f},f\in\mathcal{M}_{d}\right\} $ is a
norm-closed operator subalgebra of $B\left(\mathcal{H}_{d}^{2}\right)$.
Thus, $\mathcal{M}_{d}$ is a unital commutative Banach algebra of
complex valued and bounded analytic functions defined on $B_{d}$.
\begin{notation}
Let $Z_{1},\ldots,Z_{d}$ be the coordinate functions on $B_{d}$;
i.e., for every $i,1\leq i\leq d$,
\[
Z_{i}\left(x_{1},\ldots,x_{d}\right)=x_{i},\quad\left(x_{1},\ldots,x_{d}\right)\in B_{d}\cdot
\]
 The $d$-tuple of operators 
\[
M_{Z}:=\left(M_{Z_{1}},\ldots,M_{Z_{d}}\right)
\]
 is called the d-shift.
\end{notation}

By the proof of \cite[Proposition 2.6]{arveson1998subalgebras}, each
$Z_{i}$ is a multiplier of $\mathcal{H}_{d}^{2}$. This yields the
fact that every polynomial $p:B_{d}\longrightarrow\mathbb{C}$ is
a multiplier of $\mathcal{H}_{d}^{2}$. That is, $\mathbb{C}\left[Z\right]\subseteq\mathcal{M}_{d}$.
The component operators of $M_{Z}$ are mutually commuting contractions
in $B\left(\mathcal{H}_{d}^{2}\right)$, which satisfy 
\[
\parallel\sum_{i=1}^{d}M_{Z_{i}}f_{i}\parallel^{2}\leq\sum_{i=1}^{d}\parallel f_{i}\parallel^{2}
\]
 for all $f_{1},\ldots f_{d}\in\mathcal{H}_{d}^{2}$. That is, 
\[
M_{Z}:\oplus_{i=1}^{d}\mathcal{H}_{d}^{2}\longrightarrow\mathcal{H}_{d}^{2}
\]
 is a d-contraction \cite{arveson1998subalgebras}. Denote by $\overline{alg\left(M_{Z}\right)}^{WOT}$
the unital WOT-closed operator subalgebra of $B\left(\mathcal{H}_{d}^{2}\right)$
generated by the d-shift. Then 
\[
\overline{alg\left(M_{Z}\right)}^{WOT}=\left\{ M_{f}:f\in\mathcal{M}_{d}\right\} .
\]
Hence, we have the identification $\mathcal{M}_{d}\cong\overline{alg\left(M_{Z}\right)}^{WOT}$
(See \cite[section 3]{hartz2025operator}). The following result from
\cite{chu2022gleason} says that every kernel function of the Drury-Arveson
space is a multiplier:
\begin{prop}
(See \cite[Proposition 2.1]{chu2022gleason}). For every $x\in B_{d}$,
$k_{x}\in\mathcal{M}_{d}$. In particular, $\mathcal{M}_{d}$ is dense
in $\mathcal{H}_{d}^{2}$.
\end{prop}

As any RKHS, $H_{V}$ comes with its multiplier algebra $\mathcal{M_{\mathit{V}}\mathit{:=}\textrm{Mult\ensuremath{\left(H_{V}\right)}}}$.
This is the set of the restrictions of the multipliers of the Drury-Arveson
space to $V$, i.e., 
\[
\mathcal{M_{\mathrm{\mathit{V}}}}=\left\{ f\mid_{V}:f\in\mathcal{M}_{d}\right\} ,
\]
see Proposition \ref{Proposition 1.14} below. Moreover, $\mathcal{M}_{V}$
is a unital semisimple commutative Banach algebra. Therefore, for
any two varieties $V,W\subseteq B_{d}$, if $\varphi:\mathcal{M_{\mathit{V}}}\rightarrow\mathcal{M_{\mathit{W}}}$
is an algebra isomorphism then, as any homomorphism from a commutative
Banach algebra into a semisimple commutative Banach algebra, $\varphi$
is bounded. Therefore, for every $n\in\mathbb{N}$, $\varphi$ induces
a bounded isomorphism
\[
\varphi^{\left(n\right)}:M_{n}\left(\mathcal{M}_{V}\right)=M_{V}\otimes M_{n}\longrightarrow M_{n}\left(\mathcal{M}_{W}\right)=M_{W}\otimes M_{n}
\]
defined by 
\[
\varphi^{\left(n\right)}\left[f_{i,j}\right]_{i,j=1}^{n}=\left[\varphi\left(f_{i,j}\right)\right]_{i,j=1}^{n},
\]
Where $M_{n}$ is the $n$-square complex matrix. Set 
\[
\left\Vert \varphi\right\Vert _{cb}:=\sup_{n\in\mathbb{N}}\left\Vert \varphi^{\left(n\right)}\right\Vert \cdot
\]
 Then, we say that
\begin{itemize}
\item $\varphi$ is completely bounded if $\left\Vert \varphi\right\Vert _{cb}<\infty$.
\item $\varphi$ is completely contractive if $\left\Vert \varphi\right\Vert _{cb}\leq1$.
\item $\varphi$ is completely isometric if for every $n\in\mathbb{N}$,
$A\in M_{n}\left(\mathcal{M}_{V}\right)$ implies 
\[
\left\Vert \varphi^{\left(n\right)}\left(A\right)\right\Vert _{M_{n}\left(\mathcal{M}_{W}\right)}=\left\Vert A\right\Vert _{M_{n}\left(\mathcal{M}_{V}\right)}\cdot
\]
\end{itemize}
\begin{defn}
We say that a function $F=\left(f_{1},\ldots,f_{d}\right)$ is a \textbf{vector
valued multiplier} \textbf{on $V$}, if $f_{i}\in\mathcal{M}_{V}$
for all $i\in\left\{ 1,\ldots,d\right\} $. The multiplier norm of
$F$ is defined to be 
\[
\parallel F\parallel_{Mult\left(V\right)}=\parallel M_{F}\parallel,
\]
where $M_{F}$ is the row operator 
\[
M_{F}:=\left(M_{f_{1}},\ldots,M_{f_{d}}\right):\oplus_{i=1}^{d}H_{V}\longrightarrow H_{V}
\]
 which is given by 
\[
M_{F}\left(g_{1},\ldots,g_{d}\right)=\sum_{i=1}^{d}f_{i}g_{i},\quad\left(g_{1},\ldots,g_{d}\right)\in\oplus_{i=1}^{d}H_{V}\ldotp
\]
\end{defn}

\begin{rem}
\label{ Remark 1.2}By the following proposition, it follows that
any vector valued multiplier $F$ on a variety $V$ has an extension
to a vector valued multiplier $\overset{\sim}{F}$ on $B_{d}$, $\overset{\sim}{F}\mid_{V}=F$,
with 
\[
\parallel\overset{\sim}{F}\parallel_{Mult\left(B_{d}\right)}=\parallel F\parallel_{Mult\left(V\right)}\cdot
\]
\end{rem}

\begin{prop}
\label{Proposition 1.14}(\cite[Proposition 2.6]{davidson2015operator}).
Let $V\subseteq B_{d}$ be a variety. Then 
\[
\mathcal{M_{\mathrm{\mathit{V}}}}=\left\{ f\mid_{V}:f\in\mathcal{M_{\mathit{d}}}\right\} .
\]
Moreover, the mapping $\varphi:\mathcal{M_{\mathit{d}}}\rightarrow\mathcal{M_{\textrm{\ensuremath{\mathit{V}} }}}$,which
is given by $\varphi\left(f\right)=f\mid_{V}$ induces a completely
isometric isomorphism and a WOT-homeomorphism of $\nicefrac{\mathcal{M}_{d}}{\ker\varphi}$
onto $\mathcal{M_{\mathit{V}}}$. For any $g\in\mathcal{M_{\mathit{V}}}$
and any $f\in\mathcal{M_{\mathit{d}}}$ such that $f|_{V}=g$, we
have $M_{g}=P_{H_{V}}M_{f}|_{\mathcal{H}_{V}}$. Given any $F\in M_{k}\left(\mathcal{M}_{V}\right)$,
one can choose $\stackrel{\sim}{F}\in M_{k}\left(\mathcal{M}_{d}\right)$
so that $\overset{\sim}{F}\mid_{V}=F$ and $\parallel\overset{\sim}{F}\parallel=\parallel F\parallel$.
\end{prop}

\begin{defn}
Let $V$ and $W$ be varieties in $B_{d}$. Then, 
\end{defn}

\begin{itemize}
\item We say that $V$ and $W$ are\textbf{ biholomorphic}, if there exist
holomorphic maps $F:B_{d}\rightarrow\mathtt{\mathbb{C}}^{d}$ and
$G:B_{d}\rightarrow\mathbb{C}^{d}$ such that $G\circ F|_{V}={\bf id}_{V}$
and $F\circ G|_{W}={\bf id}_{W}$. Such $F$ is called a \textbf{biholomorphism
of $V$ onto W.}
\item We say that $V$ and $W$ are \textbf{multiplier biholomorphic,} if
there are vector valued multipliers $F$ and $G$, respectively, on
$V$ and $W$, such that $G\circ F={\bf id}_{V}$ and $F\circ G={\bf id}_{W}$.
Such $F$ is called a \textbf{multiplier biholomorphism of $V$onto
$W$.} (Note that, by Remark \ref{ Remark 1.2} and the definition
of a biholomorphism, every multiplier biholomorphic varieties are
biholomorphic).
\item We say that $V$ and $W$are \textbf{equivalent,} if they are biholomorphic
via an automorphism, $F:B_{d}\rightarrow B_{d}$, of the open unit
ball $B_{d}$.
\end{itemize}
\begin{notation}
Let $V$ and $W$ be two varieties in $B_{d}$ and let $\varphi:\mathcal{M_{\mathit{V}}}\rightarrow\mathcal{M_{\mathit{W}}}$
be an algebra isomorphism. In such case, set 
\[
G_{\varphi}:=\left(\varphi\left(Z_{1}\mid_{V}\right),\ldots,\varphi\left(Z_{d}\mid_{V}\right)\right)
\]
 and 
\[
G_{\varphi^{-1}}:=\left(\varphi^{-1}\left(Z_{1}\mid_{W}\right),\ldots,\varphi^{-1}\left(Z_{d}\mid_{W}\right)\right)\cdot
\]
\end{notation}

\begin{rem}
\label{Remark 1.1200}Let $V$ and $W$ be two varieties in $B_{d}$,
and let $\varphi:\mathcal{M_{\mathit{V}}}\rightarrow\mathcal{M_{\mathit{W}}}$
be an algebra isomorphism. Note that by definition $G_{\varphi}$
is a vector valued multiplier on $W$.
\end{rem}

The authors of \cite{ofek2021distance} introduce the notion multiplier
algebra isomorphism as follows.
\begin{defn}
\label{Definition 1.18}(\cite{ofek2021distance}). Let $V$ and $W$
be two varieties in $B_{d}$. Let $\varphi:\mathcal{M_{\mathit{V}}}\rightarrow\mathcal{M_{\mathit{W}}}$
be a completely bounded algebra isomorphism with a completely bounded
inverse. Then, $\varphi$ is called a \textbf{multiplier algebra isomorphism}
if $\varphi$ is given by 
\[
\varphi\left(f\right)=f\circ G,\quad f\in\mathcal{M_{\mathit{V}}},
\]
where $G:W\rightarrow V$ is a bijection.
\end{defn}

\begin{rem}
\label{Remark 01.19}Note that if $\varphi:\mathcal{M_{\mathit{V}}}\rightarrow\mathcal{M_{\mathit{W}}}$
is an algebra isomorphism, which is given by
\[
\varphi\left(f\right)=f\circ G,\quad f\in\mathcal{M_{\mathit{V}}},
\]
where $G:W\rightarrow V$ is a bijection, then $\varphi^{-1}:\mathcal{M_{\mathit{V}}}\rightarrow\mathcal{M_{\mathit{W}}}$
is an algebra isomorphism that is given by 
\[
\varphi^{-1}\left(f\right)=f\circ G^{-1},\quad f\in\mathcal{M_{\mathit{W}}}\cdot
\]
 Moreover, by definition we have $G=G_{\varphi}$ and $G^{-1}=G_{\varphi^{-1}}\cdot$
Therefore, by Remark \ref{Remark 1.1200}, $G$ is a multiplier biholomorphism
of $W$onto $V$.
\end{rem}

\begin{defn}
\label{Definition 1.20}(\cite{ofek2021distance}). Let $V$ and $W$
be two varieties in $B_{d}$. A bounded invertible linear map $T:\mathcal{\mathit{H}}_{V}\rightarrow\mathcal{\mathit{H}}_{W}$
is said to be a \textbf{RKHS isomorphism}, if there exists a bijection
$F:V\longrightarrow W$, and a nowhere-vanishing function $\lambda:V\rightarrow\mathbb{C}$
such that 
\[
T\left(k_{x}\right)=\lambda\left(x\right)k_{F(x)},\quad x\in V.
\]
If $T$ is a unitary, then we say that $T$ is an \textbf{isometric
RKHS isomorphim}. 
\end{defn}

\begin{rem}
Note that if a RKHS isomorphism $T:H_{V}\longrightarrow H_{W}$ given
by
\[
T\left(k_{x}\right)=\lambda\left(x\right)k_{F(x)},\quad x\in V,
\]
then 
\[
T^{-1}:\mathcal{\mathit{H}}_{W}\rightarrow\mathcal{\mathit{H}}_{V}
\]
 is a RKHS isomrphism such that
\[
T^{-1}\left(k_{y}\right)=\frac{1}{\lambda\circ F^{-1}\left(y\right)}k_{F^{-1}\left(y\right)},\quad y\in W.
\]
Moreover, by Propostion \ref{Proposition 1.29} below and Remark \ref{Remark 01.19}
, the bijection $F:V\to W$ is a multiplier biholomorphism of $V$
onto $W$.
\end{rem}

\begin{notation}
${\bf Aut\left(\mathit{B_{d}}\right)}$ will denote the group of the
automorphisms of $B_{d}$. For every $a\in B_{d}$, $\Phi_{a}$ denotes
the elementary automorphism of $B_{d}$, that interchanges $a\text{\  and \  0}$,
which is given by
\begin{align*}
\Phi_{a}\left(x\right) & =\frac{a-P_{a}x-s_{a}\left({\bf Id}-P_{a}\right)x}{1-\langle x,a\rangle}\quad x\in B_{d},
\end{align*}
 where $P_{a}x=\begin{cases}
\frac{\langle x,a\rangle}{\langle a,a\rangle}a & a\neq0\\
0 & a=0
\end{cases}$ and $s_{a}=\left(1-\parallel a\parallel^{2}\right)^{\frac{1}{2}}$
(see, \cite[Section 2.2]{rudin2008function}). 
\end{notation}

\subsection{Statement of the Main Problem\label{subsec:The-Main-Problem}}

Inspired by the work \cite{ofek2021distance}, we state our initial
research question, roughly, as follows (See \cite[Theorem 5.4]{ofek2021distance}):
\begin{question}
\label{Question 1.23}Let $V$ and $W$ be two varieties in $B_{d}$.
Are the following equivalent in some sense?
\end{question}

\begin{enumerate}
\item $V$ and $W$ are almost the image of the other under an automorphism
of the ball.
\item $\mathcal{M}_{V}$ and $\mathcal{M}_{W}$ are almost isometrically
isomorphic as multiplier algebras.
\item $H_{V}$ and $H_{W}$ are almost isometrically isomorphic as RKHS.
\end{enumerate}
To treat Question \ref{Question 1.23}, we consider the following
two facts:
\begin{itemize}
\item If $T:N_{1}\longrightarrow N_{2}$ is an invertible linear operator
between two normed spaces, $N_{1}$ and $N_{2}$, then $T$ is an
isometric isomorphism if and only if 
\[
\left\Vert T\right\Vert =\left\Vert T^{-1}\right\Vert =1\cdot
\]
\item If $V$ and $W$ are two varieties in $B_{d}$, with $d<\infty$,
each contains at least two pints, and $F:V\longrightarrow W$ is a
multiplier biholomorphism of $V$ onto $W$, then $F$ is a restriction
of an automorphism of $B_{d}$ to $V$ if and only if $\left\Vert F\right\Vert _{Mult\left(V\right)}=1$
and $\left\Vert F^{-1}\right\Vert _{Mult\left(W\right)}=1$ (See Theorem
\ref{Theorem 7.5}).
\end{itemize}
Now, we state the main problem treated in this paper, in the general
setting of varieties, as follows:
\begin{problem}
\label{Problem 1.24}Let $V\subseteq B_{d}$ be a variety that contains
at least two points, and let $\left(W_{n}\right)_{n\in\mathbb{N}}$
be a sequence of varieties in $B_{d}$. Are the following equivalent?
\end{problem}

\begin{enumerate}
\item There exists a sequence of RKHS isomorphisms $T_{n}:H_{V}\longrightarrow H_{W_{n}}$
such that 
\[
\left\Vert T_{n}\right\Vert \left\Vert T_{n}^{-1}\right\Vert \stackrel{n\rightarrow\infty}{\longrightarrow}1\cdot
\]
\item There exists a sequence of multiplier algebra isomorphisms $\varphi_{n}:\mathcal{M_{\mathit{V}}}\rightarrow\mathcal{M_{\mathit{W_{n}}}}$
such that 
\[
\left\Vert \varphi_{n}\right\Vert _{cb}\left\Vert \varphi_{n}^{-1}\right\Vert _{cb}\stackrel{n\rightarrow\infty}{\longrightarrow}1\cdot
\]
\item There exists a sequence of multiplier biholomorphisms $F_{n}:V\longrightarrow W_{n}$
such that 
\[
\mathit{\parallel\mathit{F_{n}}\parallel_{Mult\left(V\right)},\parallel F_{n}^{-1}\parallel_{Mult\left(W_{n}\right)}\overset{n\rightarrow\infty}{\longrightarrow}1\cdot}
\]
\end{enumerate}

\subsection{Background on the Isomorphism Problem \label{subsec:Background-on-isomorphism problem}}
\begin{defn}
Let $V\subseteq B_{d}$ be a\textbf{ }subvariety of $B_{d}$.
\end{defn}

\begin{itemize}
\item We say that $V$ is an\textbf{ irreducible variety }if it does not
decompose into two non-trivial subvarieties of $V$.
\item We say that $V$ is a \textbf{discrete variety} if it has no accumulation
points in $B_{d}$.
\item If $V$ is the common vanishing locus of homogeneous polynomials in
$B_{d}$, then we say that $V$ is a \textbf{homogeneous variety}. 
\end{itemize}
\begin{rem}
It is well-known that if $V$ is a homogeneous variety, then it decomposes
into a finite union of irreducible components.
\end{rem}

\begin{thm}
\label{Theorem 1.27}(\cite{davidson2015operator}). Let $d\in\mathbb{N}$,
and let $V$ and $W$ be two varieties in $B_{d}$, each is a finite
union of irreducible varieties and a discrete variety, then an algebra
isomorphism $\varphi:\mathcal{M_{\mathit{V}}}\rightarrow\mathcal{M_{\mathit{W}}}$
between their multiplier algebras determines a\textbf{ }multiplier
biholomorphism of $W$ onto \textbf{$V$, }$G_{\varphi}:W\rightarrow V$,
and the isomorphism $\varphi$ arises as a composition with $G_{\varphi}$:
\[
\varphi\left(f\right)=f\circ G_{\varphi},\quad f\in\mathcal{M}_{V}\cdot
\]
\end{thm}

Examples by Davidson, Ramsey and Shalit, show that the converse direction
of Theorem \ref{Theorem 1.27} does not hold in the general setting
of two varieties. In fact, they showed that, in general, the existence
of a multplier biholomorphism between two varieties, namely $V$ and
$W$, does not necessarily imply the existence of an algebra isomorphism
between $\mathcal{M_{\mathit{V}}}$ and $\mathcal{M_{\mathit{W}}}$
(See \cite[Example 6.2]{davidson2015operator}). However, two significant
examples for which the converse of Theorem \ref{Theorem 1.27} does
hold are finite subsets of $B_{d}$ and homogeneous varieties.

Further instances in which the converse direction of Theorem \ref{Theorem 1.27}
holds have been studied. In \cite{alpay2003hilbert}, Alpay, Putinar
and Vinnikov treated the case of embedded discs. This was extended
to finite planar domains \cite{arcozzi2008carleson} by Arcozzi, Rochberg
and Sawyer, and to finite Riemann surfaces \cite{kerr2013isomorphism}
by Kerr, McCarthy and Shalit. In \cite{ROCHBERG20191622}, Rochberg
treated finite sets of points. See also \cite{davidson2015multipliers}
by Davidson, Hartz and Shalit, and \cite{mironov2024isomorphism}
by Mironov.

In \cite[Theorem 5.9]{davidson2015operator}, it was shown that for
$d<\infty$ and two varieties $V$ and $W$ in $B_{d}$, every isometric
isomorphism $\varphi:\mathcal{M_{\mathit{V}}}\rightarrow\mathcal{M_{\mathit{W}}}$
is a completely isometric isomorphism. Thus, in this case, $\varphi$
arises as a composition with a restriction of an automorphism $B_{d}$.
However, unlike the case of finite subsets of $B_{d}$, while every
isomorphism $\varphi:\mathcal{M_{\mathit{V}}}\rightarrow\mathcal{M_{\mathit{W}}}$
is bounded, it is not neccesary a completely bounded isomorphism (See
\cite[Example 6.1]{davidson2015operator}). Therefore, the following
problem has arisen:
\begin{problem}
\label{Problem 1.28}Let $d\in\mathbb{N}$. Let $V$ and $W$ be two
varieties in $B_{d}$. Assume that $\varphi:\mathcal{M_{\mathit{V}}}\rightarrow\mathcal{M_{\mathit{W}}}$
is an algebra isomorphism arises as a composition with a multiplier
biholomorphism of $W$ onto $V$,
\[
\varphi\left(f\right)=f\circ G,\quad f\in\mathcal{M_{\mathit{V}}}\cdot
\]
What are sufficient conditions on $\varphi$ for being a multiplier
algebra isomorphism?
\end{problem}

From the proof of \cite[Proposition 4.4]{ofek2021distance}, we obtain
the following proposition:
\begin{prop}
\label{Proposition 1.29}(See \cite[Proposition 4.4]{ofek2021distance}).
Let $d\in\mathbb{N}$. Let $V$ and $W$ be two varieties in $B_{d}$.
Suppose that there exists a RKHS isomorphism $T:H_{V}\longrightarrow H_{W}$
given by
\[
T\left(k_{x}\right)=\lambda\left(x\right)k_{F(x)},\quad x\in V.
\]
Then, $T$ induces a multiplier algebra isomorphism $\varphi:\mathcal{M_{\mathit{V}}}\rightarrow\mathcal{M_{\mathit{W}}}$,
which is given by 
\[
\varphi\left(f\right)=f\circ F^{-1},\quad f\in\mathcal{M}_{V}\cdot
\]
Moreover, For every $f\in\mathcal{M_{\mathit{V}}}$, $M_{\varphi\left(f\right)}=\left(T^{*}\right)^{-1}M_{f}T^{*}.$
In this case we have 
\[
\left\Vert \varphi\right\Vert _{cb}\leq\left\Vert T\right\Vert \left\Vert T^{-1}\right\Vert \cdot
\]
\end{prop}

\begin{rem}
From Proposition \ref{Proposition 1.29} and \cite[Example 6.2]{davidson2015operator},
we deduce that, in the general setting of two varieties, namely $V$
and $W$ , the existence of a multiplier biholomorphism $F:V\to W$
between the varieties does not imply the existence of a RKHS isomorphism
between the corresponding RKHSs. In particular, $F$ does not necessary
induce a RKHS isomorphism $T:H_{V}\longrightarrow H_{W}$. However,
one can verify that if $X$ and $Y$ are each a set of $n$ points
in $B_{d}$, then a multiplier biholomorphism $F:X\to Y$ does induce
a RKHS isomorphism $T_{F}:H_{X}\longrightarrow H_{Y}$: Assume $X:=\left\{ x_{1},\ldots,x_{n}\right\} $
for some $n\in\mathbb{N}$, and let $T_{F}:H_{X}\longrightarrow H_{Y}$
be the RKHS isomorphism determined by 
\[
T_{F}\left(k_{x_{i}}\right)=k_{F\left(x_{i}\right)},\quad i\in\left\{ 1,\ldots,n\right\} \cdot
\]
Also, it is worth to remark that if, for example, $x_{1}=0$ and $F:X\to Y$
is a restriction of an automorphism of the ball with $F\left(0\right)\neq0$.
Then, $T_{F}\left(k_{x_{i}}\right)=k_{F\left(x_{i}\right)}$ is not
an isometric isomorphism. Meaning that, if $F:V\to W$ is a restriction
of an automorphism of the ball that maps a variety $V$ onto a variety
$W$, and $T:H_{V}\to H_{W}$ is a RKHSs that is induced by $F$,
then in general, $T$ is not necessary an isometric isomorphism (See
Proposition \ref{Theorem 1.31} and Theorem \ref{Theorem 1300} below). 
\end{rem}

From the definition of the kernel of $\mathcal{H}_{d}^{2}$ and \cite[Theorem 2.2.5]{rudin2008function},
we have the following theorem:
\begin{thm}
\label{Theorem 1.31}Let $F\in{\bf Aut\left(\mathit{B_{d}}\right)}$
and $a=F^{-1}\left(0\right)$. Then, then there is a unique unitary
$U:\mathbb{C^{\mathit{n}}}\longrightarrow\mathbb{C^{\mathit{n}}}$
such that $F=U\Phi_{a}$. The identity 
\[
\langle k_{x},k_{y}\rangle=\overline{\delta_{a}\left(x\right)}\delta_{a}\left(y\right)\langle k_{F\left(x\right)},k_{F\left(y\right)}\rangle,
\]
holds for every $x,y\in B_{d}$, where for every $x\in B_{d}$, $\delta_{a}\left(x\right):=\frac{k_{a}\left(x\right)}{\left\Vert k_{a}\right\Vert }$.
\end{thm}

Theorem \ref{Theorem 1.31} leads to the following:
\begin{thm}
(See \cite[Theorem 9.2]{davidson2011isomorphism} and \cite[Proposition 4.1]{davidson2015operator}).
\label{Theorem 1300}Let $F$ be an automorphism of $B_{d}$. Set
$a=F^{-1}\left(0\right)$. For every two varieties $V$ and $W$such
that $F\left(V\right)=W$, the map 
\begin{align*}
\left\{ k_{x}:x\in V\right\}  & \longrightarrow\left\{ k_{y}:y\in W\right\} \\
k_{x} & \longrightarrow\overline{\delta_{a}\left(x\right)}k_{F\left(x\right)}
\end{align*}
 extends to an isometric RKHS isomorphism $T_{F}:H_{V}\longrightarrow H_{W}$,
where for every $x\in V$, $\delta_{a}\left(x\right):=\frac{k_{a}\left(x\right)}{\left\Vert k_{a}\right\Vert }$.
\end{thm}

From Proposition \ref{Proposition 1.29} we obtain the following proposition
as an answer for Problem \ref{Problem 1.28}:
\begin{prop}
\label{Proposition 1.33}Let $d\in\mathbb{N}$. Let $V$ and $W$
be two varieties in $B_{d}$. Assume that $\varphi:\mathcal{M_{\mathit{V}}}\rightarrow\mathcal{M_{\mathit{W}}}$
is an \textbf{algebra isomorphism} that arises as a composition with
a multiplier biholomorphism $G_{\varphi}:W\to V$. That is,
\[
\varphi\left(f\right)=f\circ G_{\varphi},\quad f\in\mathcal{M_{\mathit{V}}}.
\]
Assume that there is a RKHS isomorphism $T:H_{V}\longrightarrow H_{W}$
given by
\[
T\left(k_{x}\right)=\lambda\left(x\right)k_{G_{\varphi}^{-1}},\quad x\in V\cdot
\]
 Then, $\varphi$ is a multiplier algebra isomorphism.
\end{prop}

Let $V$ and $W$ be two homogeneous varieties in $B_{d}$. Assume
that $V=\cup_{i=1}^{k}V_{i}$, for some $k\in\mathbb{N}$, is the
decomposition of $V$ into its irreducible components. Let\textbf{
$F:V\longrightarrow W$} be a biholomorphism. By \cite[Theorem 7.4]{davidson2011isomorphism}
and that $0\in\cap_{i=1}^{k}F\left(V_{i}\right)=F\left(\cap_{i=1}^{k}V_{i}\right)$,
it follows that the following are equivalent:
\begin{enumerate}
\item $\cap_{i=1}^{k}V_{i}=\left\{ 0\right\} \cdot$
\item $F$ is a restriction of an invertible linear map on $\mathbb{C}^{d}$
to $V$.
\end{enumerate}
By the following theorem, there always exists an invertible linear
map $A:\mathbb{C}^{d}\longrightarrow\mathbb{C}^{d}$ such that $A\left(V\right)=W$.
This implies that $\mathcal{M_{\mathit{V}}}$ and $\mathcal{M_{\mathit{W}}}$
are always isomorphic via a multiplier algebra isomorphism (See Theorem
\ref{Theorem 1.35} and Proposition \ref{Proposition 1.29}). Moreover,
from the fact that $\cup_{i=1}^{k}A\left(V_{i}\right)$ and $\cup_{i=1}^{k}F\left(V_{i}\right)$
are decompositions of $W$ into its irreducible components and the
fact that such a decomposition is unique, it follows that for every
$i\in\left\{ 1,\ldots,k\right\} $, there is $j\in\left\{ 1,\ldots,k\right\} $
such that $A\left(V_{i}\right)=F\left(V_{j}\right)\cdot$ This implies
that

\[
F\left(\cap_{i=1}^{k}V_{i}\right)=A\left(\cap_{i=1}^{k}V_{i}\right),
\]

\begin{thm}
\label{Theorem 1.34}(See \cite[Proposition 4.7 and Theorem 7.4 ]{davidson2011isomorphism}
and \cite[Lemma 9.6]{hartz2017isomorphism}). Let $V$ and $W$ be
homogeneous varieties. Suppose that $F:V\longrightarrow W$ is a bihlomorphism.
Then there exists a biholomorphism $F_{0}:V\rightarrow W$ s.t. $F_{0}\left(0\right)=0$.
In this case, there exists an invertible linear map $A:\mathbb{C}^{d}\longrightarrow\mathbb{C}^{d}$
such that $F_{0}=A\mid_{V}$.
\end{thm}

Let $A:\mathbb{C}^{d}\longrightarrow\mathbb{C}^{d}$ be such an invertible
linear map, with $A\left(V\right)=W$. Due to the work \cite{hartz2012topological}
by Hartz, the map 
\begin{align*}
\left\{ k_{x}:x\in V\right\}  & \longrightarrow\left\{ k_{y}:y\in W\right\} \\
k_{x} & \longrightarrow k_{Ax}
\end{align*}
 extends to a RKHS isomorphism $T_{A}:H_{V}\longrightarrow H_{W}$:
\begin{thm}
\label{Theorem 1.35}(\cite{hartz2012topological}). Let $V$ and
$W$ be homogeneous varieties in $B_{d}$. Assume that there exists
an invertible linear map $A:C^{d}\longrightarrow C^{d}$ that maps
$V$onto $W$. Then, $A$ induces a RKHS isomorphism $T_{A}:H_{V}\longrightarrow H_{W}$
such that for every $x\in V$, 
\[
T_{A}\left(k_{x}\right)=k_{Ax}\cdot
\]
\end{thm}

The existence of such a RKHS isomorphism implies that for every $f\in\mathcal{M_{\mathit{\mathit{V}}}}$,
$f\circ A^{-1}\in\mathcal{M_{\mathit{W}}}$, and that $A$ (or $T_{A}$)
induces a completely bounded algebra isomorphism (with a completely
bounded inverse) 
\[
\varphi_{A}:\mathcal{M_{\mathit{V}}}\rightarrow\mathcal{M_{\mathit{W}}},
\]
given by
\[
\varphi_{A}\left(f\right)=f\circ A^{-1},\;f\in\mathcal{M}_{V}\cdot
\]
That is, $A$ induces a multiplier algebra isomorphism (See Proposition
\ref{Proposition 1.29} above). From Theorems \ref{Theorem 1.34}
and \ref{Theorem 1.35}, it follows that if $V$ and $W$ are two
biholomorphic homogeneous varieties then $H_{V}$ and $H_{W}$ are
isomorphic as RKHSs. 

\subsection{Main Results\label{subsec:Main-results}}

\subsubsection{Main Results for General Varieties}

From Theorem \ref{Theorem 7.3}, we obtain the following result:
\begin{thm}
\label{Theorem 1.36}Let $V\subseteq B_{d}$ be a variety that contains
at least two points, and let $\left(W_{n}\right)_{n\in\mathbb{N}}$
be a sequence of varieties in $B_{d}$. Assume that for every $n\in\mathbb{N}$,
there exists a multiplier algebra isomorphism $\varphi_{n}:\mathcal{M_{\mathit{V}}}\rightarrow\mathcal{M_{\mathit{W_{n}}}}$,
which is defined by
\[
\varphi_{n}\left(f\right)=f\circ G_{\varphi_{n}},\quad f\in\mathcal{M_{\mathit{V}}}\cdot
\]
Assume that $\left\Vert \varphi_{n}^{\left(d\right)}\right\Vert ,\left\Vert \left(\varphi_{n}^{-1}\right)^{\left(d\right)}\right\Vert \stackrel{n\rightarrow\infty}{\longrightarrow}1$.
Then, 
\[
\mathit{\parallel\mathit{G_{\varphi_{n}}}\parallel_{Mult\left(W_{n}\right)},\parallel G_{\varphi_{n}}^{-1}\parallel_{Mult\left(V\right)}\overset{n\rightarrow\infty}{\longrightarrow}1}\cdot
\]
\end{thm}

From Theorem \ref{Theorem 1.36} and Corollary \ref{Corollary 7.4}
, we obtain the following result:
\begin{thm}
\label{Theorem 1.37}Let $V\subseteq B_{d}$ be a variety that contains
at least two points, and let $\left(W_{n}\right)_{n\in\mathbb{N}}$
be a sequence of varieties in $B_{d}$. Assume that for every $n\in\mathbb{N}$,
there exists a RKHS isomrphism $T_{n}:H_{V}\longrightarrow W_{n}$
such that
\[
T_{n}\left(k_{x}\right)=\delta_{n}\left(x\right)k_{F_{n}\left(x\right)},\quad x\in V,
\]
where $F_{n}:V\to W_{n}$ is a multiplier biholomorphism and $\delta_{n}:V\longrightarrow\mathbb{C}$
is a no-where vanishing function. For every $n\in\mathbb{N}$, let
$\varphi_{n}:\mathcal{M_{\mathit{V}}}\rightarrow\mathcal{M_{\mathit{W_{n}}}}$
be the multiplier algebra isomorphism, which is induced by $T_{n}$,
and defined by
\[
\varphi_{n}\left(f\right)=f\circ F_{n}^{-1},\quad f\in\mathcal{M_{\mathit{V}}}\cdot
\]
Assume that $\left\Vert T_{n}\right\Vert \left\Vert T_{n}^{-1}\right\Vert \stackrel{n\rightarrow\infty}{\longrightarrow}1$.
Then, 
\end{thm}

\begin{enumerate}
\item $\left\Vert \varphi_{n}\right\Vert _{cb}\left\Vert \varphi_{n}^{-1}\right\Vert _{cb}\stackrel{n\rightarrow\infty}{\longrightarrow}1$. 
\item $\mathit{\parallel F_{n}\parallel_{Mult\left(V\right)},\parallel F_{n}^{-1}\parallel_{Mult\left(W_{n}\right)}\overset{n\rightarrow\infty}{\longrightarrow}1}\cdot$
\end{enumerate}

\subsubsection{Main Results for Homogeneous Varieties}
\begin{notation}
Let $A:\mathbb{C}^{d}\to\mathbb{C}^{d}$ be a linear map. Then, $\left\Vert A\right\Vert _{\mathrm{OP}}$
denotes the operator norm of $A$.
\end{notation}

From Theorems \ref{Theorem 5.14} , \ref{Theorem 1.37} , \ref{Theorem 1.36}
and \ref{Theorem 6.9}, we obtain the following result, which says,
roughly, that if $V\subseteq B_{d}$ is a tractable homogeneous variety
(See Definitions \ref{Definition 1.4} and \ref{Definition 1.5}),
then small linear deformations of $V$ result in small deformations
of the corresponding RKHS and multiplier algebra, and vice versa.
\begin{thm}
\label{Theorem 1.39} Let $d\in\mathbb{N}$ . Let $V$ be a tractable
homogeneous variety in $B_{d}$ s.t. $\textrm{span}\left(V\right)=\mathbb{C}^{d}$.
Let $\left(W_{n}\right)_{n\in\mathbb{N}}$ be a sequence of homogeneous
varieties. Suppose that for every $n\in\mathbb{N}$, $A_{n}:\mathbb{C}^{d}\longrightarrow\mathbb{C}^{d}$
is an invertible linear transformation such that $W_{n}:=A_{n}V$.
For every $n\in\mathbb{N}$, let 
\[
T_{A_{n}}:H_{V}\rightarrow H_{W_{n}}
\]
 be the RKHS isomorphism determined by 
\[
T_{A_{n}}\left(k_{x}\right)=k_{A_{n}x},\;x\in V,
\]
and let $\varphi_{A_{n}}:\mathcal{M}_{W}\longrightarrow\mathcal{M}_{V}$
be the induced multiplier algebra isomorphism given by
\[
\varphi_{A_{n}}\left(f\right)=f\circ A_{n}^{-1},\quad f\in\mathcal{M}_{V}\cdot
\]
 Then, the following are equivalent.
\end{thm}

\begin{enumerate}
\item $\mathit{\left\Vert \mathit{A_{n}}\right\Vert _{\mathrm{OP}},\left\Vert A_{n}^{-1}\right\Vert _{\mathrm{OP}}\overset{n\rightarrow\infty}{\longrightarrow}1}$.
\item $\left\Vert T_{A_{n}}\right\Vert \left\Vert T_{A_{n}}^{-1}\right\Vert \stackrel{n\rightarrow\infty}{\longrightarrow}1\cdot$
\item $\left\Vert \varphi_{A_{n}}\right\Vert _{cb}\left\Vert \varphi_{A_{n}}^{-1}\right\Vert _{cb}\stackrel{n\rightarrow\infty}{\longrightarrow}1\cdot$
\item $\mathit{\left\Vert \mathit{A_{n}}\right\Vert {}_{Mult\left(V\right)},\left\Vert A_{n}^{-1}\right\Vert {}_{Mult\left(W_{n}\right)}\overset{n\rightarrow\infty}{\longrightarrow}1}$.
\end{enumerate}
From the Theorems \ref{Theorem 1.37}, \ref{Theorem 1.36}, \ref{Theorem 7.7},
\ref{Theorem 6.9} and \ref{Theorem 5.14}, we obtain the following
result:
\begin{thm}
Let $d\in\mathbb{N}$ . Let $V$ be a tractable homogeneous variety
in $B_{d}$, with $\textrm{span}\left(V\right)=\mathbb{C}^{d}$. Let
$\left(W_{n}\right)_{n\in\mathbb{N}}$ be a sequence of homogeneous
varieties. Then, the following are equivalent.
\end{thm}

\begin{enumerate}
\item There is a sequence of RKHS isomorphisms $T_{n}:H_{V}\longrightarrow H_{W_{n}}$
such that 
\[
\left\Vert T_{n}\right\Vert \left\Vert T_{n}^{-1}\right\Vert \stackrel{n\rightarrow\infty}{\longrightarrow}1\cdot
\]
\item There is a sequence of multiplier algebra isomorphisms $\varphi_{n}:\mathcal{M_{\mathit{V}}}\rightarrow\mathcal{M_{\mathit{W_{n}}}}$
such that 
\[
\left\Vert \varphi_{n}\right\Vert _{cb}\left\Vert \varphi_{n}^{-1}\right\Vert _{cb}\stackrel{n\rightarrow\infty}{\longrightarrow}1\cdot
\]
\item There is a sequence of algebra isomorphisms $\varphi_{n}:\mathcal{M_{\mathit{V}}}\rightarrow\mathcal{M_{\mathit{W_{n}}}}$
such that 
\[
\left\Vert \varphi_{n}^{\left(d\right)}\right\Vert \left\Vert \left(\varphi_{n}^{-1}\right)^{\left(d\right)}\right\Vert \stackrel{n\rightarrow\infty}{\longrightarrow}1\cdot
\]
\item There is a sequence of invertible linear transformations $A_{n}:\mathbb{C}^{d}\longrightarrow\mathbb{C}^{d}$
such that for every $n\in\mathbb{N}$, $W_{n}=A_{n}V$, and 
\[
\mathit{\left\Vert \mathit{A_{n}}\right\Vert {}_{Mult\left(V\right)},\left\Vert A_{n}^{-1}\right\Vert {}_{Mult\left(W_{n}\right)}\overset{n\rightarrow\infty}{\longrightarrow}1}\cdot
\]
\end{enumerate}

\section{RKHSs Obtained by Restricting $\mathcal{H}_{d}^{2}$ to Subsets of
the Open Unit Ball\label{sec:On-the-RKHS}}
\begin{notation}
\label{Notation 21}For any non-empty subsets $X\subseteq B_{d}$,
$\mathit{F}\subseteq\mathcal{H}_{d}^{2}$ and $S\subseteq\mathcal{M}_{d}$,
we will use the following notations:
\end{notation}

\begin{itemize}
\item $H_{X}:=\overline{\textrm{span}\left\{ k_{x}:x\in X\right\} }$
\item $Z\left(F\right):=\left\{ x\in B_{d}:f\left(x\right)=0,\text{for all }f\in F\right\} $. 
\item $I_{X}:=\left\{ f\in\mathcal{H}_{d}^{2}:f\left(x\right)=0,\:\text{for all }x\in X\right\} $.
\item $J_{X}:=\left\{ f\in\mathcal{M}_{d}:f\left(x\right)=0,\:\text{for all }x\in X\right\} $.
\item $\left\langle S\right\rangle =\left\{ \sum_{i=1}^{n}f_{i}g_{i}:f_{i}\in\mathcal{M}_{d},g_{i}\in S,n\in\mathbb{N}\right\} $;
i.e., $\left\langle S\right\rangle $ will denote the ideal in $\mathcal{M}_{d}$,
which is generated by the set $S$.
\item If $S$ is a vector subspace of $\mathcal{H}_{d}^{2}$ , then $\left[S\right]$
will be used to denote the closure of $S$ in $\mathcal{H}_{d}^{2}$. 
\end{itemize}
\begin{prop}
\label{Proposition 22}Let $F\subseteq\mathcal{H}_{d}^{2}$ be a subset.
Then,
\end{prop}

\begin{enumerate}
\item If $F_{1}\subseteq\mathcal{H}_{d}^{2}$ is a subset s.t. $F\subseteq F_{1}\subseteq I_{Z\left(F\right)}$,
then $Z\left(I_{Z\left(F\right)}\right)=Z\left(F\right)=Z\left(F_{1}\right)\cdot$
\item ( \cite[Proposition 2.1]{davidson2015operator}). $Z\left(F\right)=Z\left(J_{Z\left(F\right)}\right)=Z\left(I_{Z\left(F\right)}\right)$. 
\end{enumerate}
\begin{proof}
Let $F\subseteq\mathcal{H}_{d}^{2}$ be a subset. Then,
\begin{enumerate}
\item Since $F\subseteq F_{1}\subseteq I_{Z\left(F\right)}$, it follows
that 
\[
Z\left(I_{Z\left(F\right)}\right)\subseteq Z\left(F_{1}\right)\subseteq Z\left(F\right)\cdot
\]
 Now, note that $Z\left(F\right)\subseteq Z\left(I_{Z\left(F\right)}\right)$
to obtain 
\[
Z\left(I_{Z\left(F\right)}\right)=Z\left(F_{1}\right)=Z\left(F\right)\cdot
\]
\item See \cite{davidson2015operator}and $\left(1\right)$.
\end{enumerate}
\end{proof}
\begin{prop}
\label{Proposition 23}Let $X\subseteq B_{d}$ be a subset. Then,
\end{prop}

\begin{enumerate}
\item \label{Theorem 2.11}$I_{X}=\left(H_{X}\right)^{\perp}$. Hence, $I_{X}$
is a closed subspace of $\mathcal{H}_{d}^{2}$ and $\mathcal{H}_{d}^{2}=H_{X}\oplus I_{X}$.
\item (\cite[Proposition 2.2]{davidson2015operator}). $\mathit{H_{X}}=H_{Z\left(I_{X}\right)}=H_{Z\left(J_{X}\right)}$.
\item $I_{X}=I_{Z\left(I_{X}\right)}=I_{Z\left(J_{X}\right)}$.
\item $Z\left(I_{X}\right)=Z\left(J_{X}\right)$.
\item $J_{X}=I_{X}\cap\mathcal{M}_{d}=\left[J_{X}\right]\cap\mathcal{M}_{d}$.
\end{enumerate}
\begin{proof}
\begin{enumerate}
\item Let $f\in\mathcal{H}_{d}^{2}$. Then,
\[
f\in I_{X}\Leftrightarrow\forall x\in X,\;f\left(x\right)=0\Leftrightarrow\forall x\in X,\:\left\langle f,k_{x}\right\rangle =0\Leftrightarrow f\in\left(H_{X}\right)^{\perp}.
\]
\item See \cite{davidson2015operator}.
\item The assertion follows from $\left(2\right)+\left(1\right)$.
\item By $\left(3\right)$ and Proposition \ref{Proposition 22} , we have
$I_{X}=I_{Z\left(J_{X}\right)}$ and $Z\left(I_{Z\left(J_{X}\right)}\right)=Z\left(J_{X}\right)$.
It follows that 
\[
Z\left(I_{X}\right)=Z\left(I_{Z\left(J_{X}\right)}\right)=Z\left(J_{X}\right).
\]
\item Note that $J_{X}\subseteq\left[J_{X}\right]\subseteq I_{X}$ and $I_{X}\cap\mathcal{M}_{d}=J_{X}$.
Therefore, $J_{X}\subseteq\left[J_{X}\right]\cap\mathcal{M}_{d}\subseteq J_{X}$.
Hence, $J_{X}=\left[J_{X}\right]\cap\mathcal{M}_{d}$.
\end{enumerate}
\end{proof}
\begin{prop}
\label{Proposition 24}Let $X\subseteq B_{d}$ be a subset. Then,
for every $y\in B_{d}$, $\mathit{y}\in Z\left(I_{X}\right)$ if and
only if $k_{\mathit{y}}\in H_{X}$ if and only if $k_{y}\in\left(J_{X}\right)^{\perp}$.
\end{prop}

\begin{proof}
Let $y\in B_{d}$. Then,
\[
y\in Z\left(I_{X}\right)\Leftrightarrow\forall f\in I_{X},\;f\left(y\right)=0\Leftrightarrow\forall f\in I_{X},\:\left\langle f,k_{y}\right\rangle =0\Leftrightarrow k_{y}\in\left(I_{X}\right)^{\perp}=H_{X}
\]
 and 
\[
y\in Z\left(J_{X}\right)\Longleftrightarrow\forall f\in J_{X},\;f\left(y\right)=0\Leftrightarrow\forall f\in J_{X},\:\left\langle f,k_{y}\right\rangle =0\Leftrightarrow k_{y}\in\left(J_{X}\right)^{\perp}.
\]
Since, by Proposition \ref{Proposition 23}, $Z\left(I_{X}\right)=Z\left(J_{X}\right)$,
the assertion follows.
\end{proof}
We have seen, above, that for any subset $X\subseteq B_{d}$, $H_{X}=H_{Z\left(I_{X}\right)}$.
Now, we will show that $Z\left(I_{X}\right)$ is the unique variety
with this property:
\begin{thm}
\label{Theorem 25} Let $X\subseteq B_{d}$ be a subset, and let $V\subseteq B_{d}$
be a variety. Then, $H_{X}=H_{V}$ if and only if $Z\left(I_{X}\right)=V$.
\end{thm}

\begin{proof}
By Proposition \ref{Proposition 23}$,$ $H_{X}=H_{V}$ if and only
$I_{X}=I_{V}$ if and only if $Z\left(I_{X}\right)=Z\left(I_{V}\right)$
. Since, by Proposition \ref{Proposition 22}, $Z\left(I_{V}\right)=V$,
the assertion follows.
\end{proof}
\begin{thm}
Let $V\subseteq B_{d}$ be a variety, and let $y\in B_{d}$. Then,
$y\in V$ if and only $k_{y}\in H_{V}$ if and only if $\mathit{k_{y}\in}\left(J_{V}\right)^{\perp}$.
\end{thm}

\begin{proof}
By Propositions \ref{Proposition 22}, we have $Z\left(I_{V}\right)=V$.
Now, apply \ref{Proposition 24}.
\end{proof}
\begin{notation}
For any non empty subsets $X\subseteq B_{d}$, set $\mathcal{H}_{d}^{2}\mid_{X}:=\left\{ f\mid_{X}:f\in\mathcal{H}_{d}^{2}\right\} $.
\end{notation}

\begin{prop}
\label{proposition 28}Let $X\subseteq B_{d}$ be a non empty set.
Then, 
\end{prop}

\begin{enumerate}
\item For every $f\in\mathcal{H}_{d}^{2}$, $P_{H_{X}}\left(f\right)\mid_{X}=f\mid_{X}\cdot$
\item $\left(H_{X}\right)\mid_{X}=\mathcal{H}_{d}^{2}\mid_{X}$.
\item The map $T:H_{X}\longrightarrow\mathcal{H}_{d}^{2}\mid_{X}$, which
is given by $T\left(f\right)=f\mid_{X}$ is a bijective linear map.
\end{enumerate}
\begin{proof}
Let $X\subseteq B_{d}$ be a non empty set. 
\end{proof}
\begin{enumerate}
\item Let $f\in\mathcal{H}_{d}^{2}$. then, $P_{H_{X}}\left(f\right)\mid_{X}=P_{H_{X}}\left(f\right)\mid_{X}+P_{I_{X}}\left(f\right)\mid_{X}=\left(P_{H_{X}}\left(f\right)+P_{I_{X}}\left(f\right)\right)\mid_{X}=f\mid_{X}\cdot$
\item Since $H_{X}\subseteq\mathcal{H}_{d}^{2}$ it follows that $\left(H_{X}\right)\mid_{X}\subseteq\mathcal{H}_{d}^{2}\mid_{X}$.
Now, if $f\in\mathcal{H}_{d}^{2}$ then $P_{H_{X}}\left(f\right)\in H_{X}$.
By $\left(1\right)$ we have $P_{H_{X}}\left(f\right)\mid_{X}=f\mid_{X}\cdot$
Therefore , $\left(H_{X}\right)\mid_{X}\supseteq\mathcal{H}_{d}^{2}\mid_{X}$.
Hence, $\left(H_{X}\right)\mid_{X}=\mathcal{H}_{d}^{2}\mid_{X}$.
\item It is clear that $T$ is linear and $\ker T=H_{X}\cap I_{X}=\left\{ 0\right\} $.
Therefore, $T$ is an injective linear map. Now, if $f\in\mathcal{H}_{d}^{2}$
then $P_{H_{X}}\left(f\right)\in H_{X}$, and by $\left(1\right)$
we have 
\[
T\left(P_{H_{X}}\left(f\right)\right)=P_{H_{X}}\left(f\right)\mid_{X}=f\mid_{X}\cdot
\]
 Therefore, $T$ is surjective.
\end{enumerate}
\begin{cor}
\label{Corollary 29}Let $X\subseteq B_{d}$ be a non empty set. For
every $f,g\in\mathcal{H}_{d}^{2}$, define 
\[
\left\langle f\mid_{X},g\mid_{X}\right\rangle _{\mathcal{H}_{d}^{2}\mid_{X}}:=\left\langle P_{H_{X}}\left(f\right),P_{H_{X}}\left(g\right)\right\rangle _{\mathcal{H}_{d}^{2}}\cdot
\]
Then, $\mathcal{H}_{d}^{2}\mid_{X}$ is isometrically isomorphic to
$H_{X}$. Moreover, $\mathcal{H}_{d}^{2}\mid_{X}$ is a RKHS on $X$
that determined by the reproducing kernel 
\[
k:X\times X\rightarrow\mathbb{C}
\]
 given by 
\[
k\left(x,y\right)=\frac{1}{1-\langle x,y\rangle},\quad\textrm{for all }x,y\in X\cdot
\]
 In fact, $\mathcal{H}_{d}^{2}\mid_{X}:=\overline{\text{span}\left\{ k_{x}\mid_{X}:x\in X\right\} }$.
\end{cor}

\begin{proof}
By Proposition \ref{proposition 28}, it follows that $T$ is an isometric
isomorphism. Moreover, for every $x\in X$ and $f\in\mathcal{H}_{d}^{2}$,
we have
\[
f\mid_{X}=P_{H_{X}}\left(f\right)\mid_{X}\cdot
\]
 Therefore, $\rho_{x}\left(f\mid_{X}\right)=\rho_{x}\left(P_{H_{X}}\left(f\right)\right)$.
Note that since $H_{X}\subseteq\mathcal{H}_{d}^{2}$, $\rho_{x}$
is bounded on $\mathcal{H}_{d}^{2}$ and $\left\Vert f\mid_{X}\right\Vert _{H_{d}^{2}\mid\mid_{X}}=\left\Vert P_{H_{X}}\left(f\right)\right\Vert _{H_{X}}$,
it follows that it is bounded on $H_{X}$. Therefore, $\rho_{x}$
is bounded on $\mathcal{H}_{d}^{2}\mid_{X}$. Hence, $\mathcal{H}_{d}^{2}\mid_{X}$
is a RKHS on $X$. Finally, recall that for every $x\in X$, $k_{x}\in H_{X}$
and that for every $f\in\mathcal{H}_{d}^{2}$ we have
\[
f\mid_{X}\left(x\right)=P_{H_{X}}\left(f\right)\left(x\right)=\left\langle P_{H_{X}}\left(f\right),k_{x}\right\rangle _{\mathcal{H}_{d}^{2}}=\left\langle f\mid_{X},k_{x}\mid_{X}\right\rangle _{\mathcal{H}_{d}^{2}\mid_{X}}\cdot
\]
Therefore, $\mathcal{H}_{d}^{2}\mid_{X}$ is determined by the reproducing
kernel 
\[
k:X\times X\rightarrow\mathbb{C}
\]
 given by 
\[
k\left(x,y\right)=\frac{1}{1-\langle x,y\rangle},\quad\textrm{for all }x,y\in X\cdot
\]
 Moreover, $\mathcal{H}_{d}^{2}\mid_{X}:=\overline{\text{span}\left\{ k_{x}\mid_{X}:x\in X\right\} }$.
\end{proof}
\begin{claim}
Let $X\subseteq B_{d}$ be a subset. Let $J:\mathcal{H}_{d}^{2}\longrightarrow F_{s}\left(\mathbb{C}^{d}\right)$
be the anti-unitary operator defined in \ref{Equation 2.1.1}. Then,
\end{claim}

\[
J\left(H_{X}\right)=\overline{\text{span\ensuremath{\left\{  \oplus_{n=0}^{\infty}x^{n}:x\in X\right\} } }}\subseteq\oplus_{n=0}^{\infty}X^{n}=\oplus_{n=0}^{\infty}\text{\ensuremath{\mathbb{\left(C\mathit{X}\right)^{\mathit{n}}}}}\subseteq\oplus_{n=0}^{\infty}\left(\text{span\ensuremath{\mathit{X}}}\right)^{n}\cdot
\]
(Here $\mathbb{C}X:=\left\{ \alpha x:\alpha\in\mathbb{C},x\in X\right\} $). 
\begin{proof}
Recall that $H_{X}=\overline{\text{span\ensuremath{\left\{  k_{x}:x\in X\right\} } }}$and
note that $J\left(H_{X}\right)=\overline{\text{span\ensuremath{\left\{  J\left(k_{x}\right):x\in X\right\} } }}$,
because $J$ is an anti-unitary. The rest is clear.
\end{proof}

\section{RKHSs on Homogeneous Varieties\label{Sec:On-the-RKHS-Homogeneous}}

This section is mainly devoted to showing useful equivalent conditions
for a variety to be homogeneous. See Theorem \ref{Theorem 36} below.
\begin{claim}
\label{Claim 31}Let $f=\sum_{n=0}^{\infty}\left\langle \cdot,\xi_{n}\right\rangle \in\mathcal{H}_{d}^{2}$
and $n\in\mathbb{N}_{0}$. Then, For every $x\in B_{d}$, 
\[
f_{n}\left(x\right)=\left\langle f,\left\langle \cdot,x^{n}\right\rangle \right\rangle \cdot
\]
\end{claim}

\begin{proof}
Let $f=\sum_{k=0}^{\infty}\left\langle \cdot,\xi_{k}\right\rangle \in\mathcal{H}_{d}^{2}$
and $n\in\mathbb{N}_{0}$. Since $J:\mathcal{H}_{d}^{2}\longrightarrow\mathcal{F}_{s}\left(\mathbb{C}^{d}\right)$
is an anti-unitary, it follows that homogeneous polynomials of different
degree are orthogonal in $\mathcal{H}_{d}^{2}$. Therefore, for every
$x\in B_{d}$,
\[
f_{n}\left(x\right)=\left\langle f_{n},k_{x}\right\rangle =\left\langle f_{n},\left\langle \cdot,x^{n}\right\rangle \right\rangle =\left\langle f,\left\langle \cdot,x^{n}\right\rangle \right\rangle .
\]
\end{proof}
\begin{prop}
\label{Proposition 32}Let $V\subseteq B_{d}$ be a variety. Let $n\in\mathbb{N}$.
Then, 
\end{prop}

\begin{enumerate}
\item For every $x\in B_{d}$, $\left\langle \cdot,x^{n}\right\rangle \in H_{V}\Longleftrightarrow f_{n}\left(x\right)=0\:\text{for all }f\in I_{V}$.
\item $\text{span}\left\{ \left\langle \cdot,x^{n}\right\rangle :x\in V\right\} \subseteq H_{V}\Longleftrightarrow f_{n}\in J_{V}\:\text{for all }f\in I_{V}$.
\end{enumerate}
\begin{proof}
Let $n\in\mathbb{N}$.
\begin{enumerate}
\item Let $x\in B_{d}$. Then, 
\begin{eqnarray*}
\left\langle \cdot,x^{n}\right\rangle  & \in & H_{V}\Longleftrightarrow\left\langle \cdot,x^{n}\right\rangle \in\left(I_{V}\right)^{\perp}\\
 &  & \Longleftrightarrow f_{n}\left(x\right)=\left\langle f,\left\langle \cdot,x^{n}\right\rangle \right\rangle =0,\;\text{for all }f\in I_{V}\cdot
\end{eqnarray*}
\item 
\begin{eqnarray*}
\text{span}\left\{ \left\langle \cdot,x^{n}\right\rangle :x\in V\right\}  & \subseteq & H_{V}\Longleftrightarrow\left\langle \cdot,x^{n}\right\rangle \in H_{V},\:\text{for all }x\in V\\
 &  & \overset{\left(1\right)}{\Longleftrightarrow}f_{n}\left(x\right)=0,\:\text{for all }x\in V,f\in I_{V}\\
 &  & \Longleftrightarrow f_{n}\in J_{V},\:\text{for all }f\in I_{V}.
\end{eqnarray*}
\end{enumerate}
\end{proof}
\begin{notation}
$\mathbb{C}\left[Z\right]$ denotes the ring of complex polynomials
in $\mathbb{C}^{d}$.
\end{notation}

\begin{prop}
\label{Proposition 34}Let $V\subseteq B_{d}$ be a variety. Then, 
\begin{enumerate}
\item $f_{n}\in J_{V}$ for all $f\in J_{V}$ and $n\in\mathbb{N}$, if
and only if $f_{n}\in J_{V}$ for all $f\in\left[J_{V}\right]$ and
$n\in\mathbb{N}$.
\item Let $I\subseteq\mathcal{H}_{d}^{2}$ s.t. $f_{n}\in J_{V}$ for all
$f\in I$ and $n\in\mathbb{N}_{0}$, then $\overline{I}\subseteq\left[I_{V}\cap\mathbb{C}\left[Z\right]\right]\subseteq\left[J_{V}\right]\subseteq I_{V}$.
\end{enumerate}
\begin{proof}
Let $V\subseteq B_{d}$ be a variety.
\begin{enumerate}
\item $\left(\Rightarrow\right)$: Suppose that $g_{n}\in J_{V}$ for all
$g\in J_{V}$ and $n\in\mathbb{N}$. Let $f\in\left[J_{V}\right]$
and $n\in\mathbb{N}$. Then, there is $\left(f^{\left(m\right)}\right)_{m=1}^{\infty}\subseteq J_{V}$
s.t. that $f^{\left(m\right)}\stackrel{m\rightarrow\infty}{\longrightarrow}f$
in $\mathcal{H}_{d}^{2}$. Now, since for every $m\in\mathbb{N}$,
we have
\[
\left\Vert f_{n}^{\left(m\right)}-f_{n}\right\Vert ^{2}\leq\sum_{k=0}^{\infty}\left\Vert f_{k}^{\left(m\right)}-f_{k}\right\Vert ^{2}=\left\Vert f^{\left(m\right)}-f\right\Vert _{\mathcal{H}_{d}^{2}}^{2},
\]
 it follows that $\left\Vert f_{n}^{\left(m\right)}-f_{n}\right\Vert ^{2}\stackrel{m\rightarrow\infty}{\longrightarrow}0$.
Note that, for every $x\in V$,
\[
\left|f_{n}\left(x\right)\right|=\left|f_{n}^{\left(m\right)}\left(x\right)-f_{n}\left(x\right)\right|=\left|\left\langle f_{n}^{\left(m\right)}-f_{n},k_{x}\right\rangle \right|\leq\left\Vert f_{n}^{\left(m\right)}-f_{n}\right\Vert \left\Vert k_{x}\right\Vert .
\]
Therefore, $f_{n}\in J_{V}$. The converse; i.e., $\left(\Leftarrow\right)$,
follows from the fact that $J_{V}\subseteq\left[J_{V}\right]$ .
\item Note that $I\subseteq\left[I_{V}\cap\mathbb{C}\left[Z\right]\right]\subseteq\left[J_{V}\right]\subseteq I_{V}$.
In fact, for every $f\in I$ and $n\in\mathbb{N}$, $\sum_{k=0}^{n}f_{k}\in I_{V}\cap\mathbb{C}\left[Z\right]$.
Hence, $f\in\left[I_{V}\cap\mathbb{C}\left[Z\right]\right]$.
\end{enumerate}
\end{proof}
\end{prop}

\begin{thm}
\label{Theorem 35}Let $V\subseteq B_{d}$ be a variety such that
for every $f\in I_{V}$ and $n\in\mathbb{N}_{0}$, $f_{n}\in J_{V}$.
Then, 
\[
\left[I_{V}\cap\mathbb{C}\left[Z\right]\right]=\left[J_{V}\right]=I_{V}\cdot
\]
\end{thm}

\begin{proof}
The proof follows from Proposition \ref{Proposition 34}.
\end{proof}
\begin{thm}
\label{Theorem 36}Let $V\subseteq B_{d}$ be a variety. The following
are equivalent.
\end{thm}

\begin{enumerate}
\item $V$ is homogeneous.
\item $f_{n}\in J_{V}$ for all $f\in I_{V}$ and $n\in\mathbb{N}_{0}$.
\item $J\left(H_{V}\right)=\oplus_{n=0}^{\infty}V^{n}$.
\item $f_{n}\in H_{V}$ for all $f\in H_{V}$ and $n\in\mathbb{N}_{0}$.
\item $H_{V}=\overline{\textrm{span}\left\{ \left\langle \cdot,x^{n}\right\rangle :x\in V,n\in\mathbb{N}_{0}\right\} }$
. 
\end{enumerate}
\begin{proof}
Let $V\subseteq B_{d}$ be a variety. 
\begin{itemize}
\item $\left(\left(1\right)\Rightarrow\left(2\right)\right)$: Suppose that
$V$ is homogeneous, and let $f\in I_{V}$. We will show, by induction,
that $f_{n}\in J_{V}$ for all $n\in\mathbb{N}_{0}$: Since $f\in I_{V}$,
it follows that $f_{0}=0$. That is, $f_{0}\in J_{V}$. Now, suppose
that for some $k\in\mathbb{N}_{0}$, $f_{n}\in J_{V}$ for all $n=0,....,k$.
Then, for every $x\in V$ and $t\in\mathbb{C}$ s.t. $0<\left|t\right|\leq1$,
we obtain
\[
0=\frac{f\left(tx\right)}{t^{k+1}}=f_{k+1}\left(x\right)+t\sum_{n=k+2}^{\infty}t^{n-\left(k+2\right)}f_{n}\left(x\right)\stackrel{t\longrightarrow0}{\longrightarrow}f_{k+1}\left(x\right).
\]
Hence, $f_{k+1}\in J_{V}$.
\item $\left(\left(2\right)\Rightarrow\left(1\right)\right)$: suppose that
$f_{n}\in J_{V}$ for all $f\in I_{V}$ and $n\in\mathbb{N}_{0}$,
Then $I_{V}\cap\mathbb{C}\left[Z\right]$ is a homogeneous ideal in
$\mathbb{C}\left[Z\right]$. By Proposition \ref{Proposition 34},
we have $I_{V}=\left[I_{V}\cap\mathbb{C}\left[Z\right]\right]$ .
Therefore, $Z\left(I_{V}\cap\mathbb{C}\left[Z\right]\right)=Z\left(I_{V}\right)=V$.
Hence, $V$ is homogeneous.
\item $\left(\left(2\right)\Leftrightarrow\left(3\right)\right)$: Since
always we have 
\[
J\left(H_{V}\right)\subseteq\oplus_{n=0}^{\infty}\text{span}\left\{ x^{n}:x\in V\right\} ,
\]
 it follows that $J\left(H_{V}\right)=\oplus_{n=0}^{\infty}\text{span}\left\{ x^{n}:x\in V\right\} $
if and only if for every $n\in\mathbb{N}$, 
\[
\text{span}\left\{ \left\langle \cdot,x^{n}\right\rangle :x\in V\right\} \subseteq H_{V}
\]
 if and only if $f_{n}\in J_{V}$ for all $f\in I_{V}$ and $n\in\mathbb{N}$,
by Proposition \ref{Proposition 32}.
\item $\left(\left(2\right)\Rightarrow\left(4\right)\right)$: Suppose that
$f_{n}\in J_{V}$ for all $f\in I_{V}$ and $n\in\mathbb{N}_{0}$.
Let $g\in H_{V}$, $f\in I_{V}$ and $n\in\mathbb{N}_{0}$. Then,
\[
\left\langle g_{n},f\right\rangle =\left\langle g_{n},f_{n}\right\rangle =\left\langle g,f_{n}\right\rangle =0\cdot
\]
 Hence, $g_{n}\in\left(I_{V}\right)^{\perp}=H_{V}$.
\item $\left(\left(4\right)\Rightarrow\left(2\right)\right)$:Suppose that
$f_{n}\in H_{V}$ for all $f\in H_{V}$ and $n\in\mathbb{N}_{0}$.
Since for every $x\in V$, $k_{x}\in H_{V}$, it follows that for
every $x\in V$ and $n\in\mathbb{N}\cup\left\{ 0\right\} $, $\left\langle \cdot,x^{n}\right\rangle \in H_{V}$.
Now, let $f\in I_{V}$ and $n\in\mathbb{N}\cup\left\{ 0\right\} $,
then $f_{n}\left(x\right)=\left\langle f,\left\langle \cdot,x^{n}\right\rangle \right\rangle =0$.
Hence, $f_{n}\in J_{V}$.
\item $\left(\left(5\right)\Rightarrow\left(2\right)\right)$:Suppose that
$H_{V}=\overline{\textrm{span}\left\{ \left\langle \cdot,x^{n}\right\rangle :x\in V,n\in\mathbb{N}_{0}\right\} }$.
Then, in particular, for every $x\in V$ and $n\in\mathbb{N}_{0}$,
$\left\langle \cdot,x^{n}\right\rangle \in H_{V}$. Hence, for every
$f\in I_{V}$ and $n\in\mathbb{N}_{0}$, $f_{n}\in J_{V}$.
\item $\left(\left(4\right)\Rightarrow\left(5\right)\right)$:Suppose that
$f_{n}\in H_{V}$ for all $f\in H_{V}$ and $n\in\mathbb{N}_{0}$.
Then, in particular, for every $x\in V$ and $n\in\mathbb{N}_{0}$,
$\left\langle \cdot,x^{n}\right\rangle \in H_{V}$. It follows that,
\[
H_{V}=\overline{\textrm{span}\left\{ k_{x}:x\in V\right\} }\subseteq\overline{\textrm{span}\left\{ \left\langle \cdot,x^{n}\right\rangle :x\in V,n\in\mathbb{N}_{0}\right\} }\subseteq H_{V}\cdot
\]
\end{itemize}
\end{proof}
\begin{thm}
\label{Theorem 37}Let $V\subseteq B_{d}$ be a homogeneous variety.
Then,
\end{thm}

\begin{enumerate}
\item $\left[I_{V}\cap\mathbb{C}\left[Z\right]\right]=\left[J_{V}\right]=I_{V}$.
\item $\left[H_{V}\cap\mathbb{C}\left[Z\right]\right]=H_{V}$.
\end{enumerate}
\begin{proof}
The proof follows by Theorem \ref{Theorem 36} and Proposition \ref{Proposition 34}.
\end{proof}
\begin{notation}
For any variety $V\subseteq B_{d}$, if $V$ is decomposed into finite
union of irreducible varieties; say, $V=V_{1}\cup\cdot\cdot\cdot\cup V_{k}$
(for some $k\in\mathbb{N}$) , where for every $i\in\left\{ 1,\ldots,k\right\} $,
$V_{i}$ is an irreducible variety in $B_{d}$, then, 
\end{notation}

\begin{itemize}
\item For every $i\in\left\{ 1,\ldots,k\right\} $, Set $S_{i}\left(V\right):=B_{d}\cap\textrm{span}\left(V_{i}\right)$.
\item $S\left(V\right):=\bigcup_{i=1}^{k}S_{i}\left(V\right)$.
\end{itemize}
\begin{rem}
Note that if $V\subseteq B_{d}$ is a variety, which is decomposed
into finite union of irreducible varieties, then $S\left(V\right)$
is a homogeneous variety in $B_{d}$.
\end{rem}

\begin{prop}
\label{Proposition 310}Let $V\subseteq B_{d}$ be a variety. Suppose
that $V$ is decomposed into finite union of irreducible varieties,
and let $V=V_{1}\cup\cdot\cdot\cdot\cup V_{k}$  be the decomposition
of $V$ into a finite union of irreducible varieties. Then,
\end{prop}

\begin{enumerate}
\item $\mathcal{\mathit{H}_{\mathit{V}}\mathit{=}\overline{\sum_{\mathit{i=1}}^{\mathit{k}}\mathcal{\mathit{H}_{\mathit{V_{i}}}}}}$.
\item $H_{S\left(V\right)}=\sum_{i=1}^{k}\mathcal{\mathit{H}_{\mathit{S_{i}\left(V\right)}}}$.
\end{enumerate}
\begin{proof}
\begin{enumerate}
\item Note that $I_{V}=\bigcap_{i=1}^{k}I_{V_{i}}$. Therefore, $H_{V}=\left(I_{V}\right)^{\perp}=\overline{\sum_{i=1}^{k}\left(I_{V_{i}}\right)^{\perp}}$.
Hence, $\mathcal{\mathit{H}_{\mathit{V}}\mathit{=}\overline{\sum_{\mathit{i=1}}^{\mathit{k}}\mathcal{\mathit{H}_{\mathit{V_{i}}}}}}$.
\item Apply $\left(1\right)$ for $S\left(V\right)$ to obtain $H_{S\left(V\right)}=\overline{\sum_{i=1}^{k}\mathcal{\mathit{H}_{\mathit{S_{i}\left(V\right)}}}}$.
Now, by \cite[Lemma 2.7+Corollary 5.8]{hartz2012topological} and
Proposition \ref{Theorem 37}, we obtain that $\sum_{i=1}^{k}\mathcal{\mathit{H}_{\mathit{S_{i}\left(V\right)}}\subseteq\mathit{\mathcal{H}_{d}^{2}}}$
is closed.
\end{enumerate}
\end{proof}

\section{Tractable unions of linear subspaces of $\mathbb{C}^{d}$}

In this section, we study the structure of finite unions of linear
subspaces of $\mathbb{C}^{d}$. Recall that, by Theorem \ref{Theorem 1.34},
if $V$ and $W$ are biholomorphic homogeneous varieties in $B_{d}$,
then there is an invertible linear map $A:\mathbb{C^{\mathit{d}}}\longrightarrow\mathbb{C^{\mathit{d}}}$
s.t. $A\left(V\right)=W$. By Proposition \ref{Proposition 4.1} below,
$A$ is isometric on each linear space spanned by each irreducible
component of $V$. We aim to simplify the evaluation of the norm of
the induced RKHS isomorphism $T_{A}$ by reducing the problem to a
lower dimensional case, whenever it is possible (see Corollary \ref{Corollary 5.6}).
We also aim to deepen our understanding of the structure of such unions
and to make their analysis easier for this purpose. In Subsection
\ref{subsec:Examples-of-tractable}, we present examples of tractable
unions of linear subspaces of $\mathbb{C}^{d}$. 
\begin{prop}
\label{Proposition 4.1}(\cite[Proposition 7.6]{davidson2011isomorphism})
Let $V$ be a homogeneous variety in $\mathbb{C}^{d}$, and let $A$
be a linear map on $\mathbb{C}^{d}$ such that $\left\Vert A\left(x\right)\right\Vert =\left\Vert x\right\Vert $
for all $x\in V$. If $V=\cup_{i=1}^{k}V_{i}$ is the decomposition
of  $V$ into irreducible components, then $A$ is isometric on $\textrm{span}\left(V_{i}\right)$
for $1\leq i\leq k$.
\end{prop}

\subsection{Preliminaries }
\begin{notation}
If $E\subseteq\mathbb{C}^{d}$ is a subspace, then $E^{\perp}$ denotes
the orthogonal complement of $E$ in $\mathbb{C}^{d}$.
\end{notation}

\begin{notation}
For any linear  transformation $A:\mathbb{C^{\mathit{d}}}\longrightarrow\mathbb{C^{\mathit{d}}}$
and a (linear) subspace $E\subseteq\mathbb{C^{\mathit{d}}}$, $A\mid_{E}:E\longrightarrow A\left(E\right)$
is the linear  transformation that is given by 
\[
A\mid_{E}\left(x\right)=A\left(x\right)\:,x\in E\cdot
\]
\end{notation}

\begin{lem}
\label{Lemma 4.4}Let $A:\mathbb{C^{\mathit{d}}}\longrightarrow\mathbb{C^{\mathit{d}}}$
be a linear  transformation and let $E\subseteq\mathbb{C^{\mathit{d}}}$
be a subspace. Then, the following are equivalent.
\end{lem}

\begin{enumerate}
\item $A\left(E\right)\perp A\left(E^{\perp}\right)$.
\item $A^{*}A\left(E\right)\subseteq E$.
\item $\left(A\mid_{E}\right)^{*}=A^{*}\mid_{A\left(E\right)}$ .
\end{enumerate}
\begin{proof}
Let $A:\mathbb{C^{\mathit{d}}}\longrightarrow\mathbb{C^{\mathit{d}}}$
be a linear  transformation and let $E\subseteq\mathbb{C^{\mathit{d}}}$
be a subspace. 
\begin{itemize}
\item $\left(1\right)\Rightarrow\left(2\right)$: Suppose that $A\left(E\right)\perp A\left(E^{\perp}\right)$.
Let $x\in E$ and $y\in E^{\perp}$. Then, 
\[
\left\langle A^{*}Ax,y\right\rangle =\left\langle Ax,Ay\right\rangle =0\cdot
\]
 Therefore, $\mathit{A^{*}A\left(E\right)\subseteq\left(E^{\perp}\right)^{\perp}=E}$.
\item $\left(2\right)\Rightarrow\left(1\right)$: Suppose that $\mathit{A^{*}A\left(E\right)\subseteq E}$.
Then, for every $x\in E$ and $y\in E^{\perp}$,$\left\langle Ax,Ay\right\rangle =\left\langle A^{\ast}Ax,y\right\rangle =0$.
Therefore, $A\left(E\right)\perp A\left(E^{\perp}\right)$.
\item $\left(2\right)\Rightarrow\left(3\right)$: Suppose that $A^{*}A\left(E\right)\subseteq E$.
Since $\left(A\mid_{E}\right)^{*}=P_{E}A^{*}\mid_{A\left(E\right)}$,
it follows that $\left(A\mid_{E}\right)^{*}=A^{*}\mid_{A\left(E\right)}$. 
\item $\left(3\right)\Rightarrow\left(2\right)$: Suppose that $\left(A\mid_{E}\right)^{*}=A^{*}\mid_{A\left(E\right)}$.
Then, 
\[
A^{*}A\left(E\right)={\bf \mathit{\left(A\mid_{E}\right)^{\ast}A\left(E\right)={\bf \textrm{image}\left(\mathit{A\mid_{E}}\right)^{*}}\subseteq E\cdot}}
\]
\end{itemize}
\end{proof}
\begin{notation}
For any linear  transformation $A:\mathbb{C^{\mathit{d}}}\longrightarrow\mathbb{C^{\mathit{d}}}$
and a (linear) subspace $E\subseteq\mathbb{C^{\mathit{d}}}$,
\end{notation}

\[
E_{1}\left(A\right):=ker\left(A^{*}A-{\bf Id_{\mathbb{C}^{\mathit{d}}}}\right)\cdot
\]

\begin{prop}
\label{Proposition 4.6}Let $A:\mathbb{C^{\mathit{d}}}\longrightarrow\mathbb{C^{\mathit{d}}}$
be a linear  transformation and let $E\subseteq\mathbb{C^{\mathit{d}}}$
be a subspace. Then, the following are equivalent.
\end{prop}

\begin{enumerate}
\item $E\subseteq E_{1}\left(A\right)$.
\item $A$ is isometric on $E$ and $A\left(E\right)\perp A\left(E^{\perp}\right)$.
\end{enumerate}
\begin{proof}
Assume that $E\subseteq E_{1}\left(A\right)$. Then, for every $x\in E$,
$A^{*}Ax=x$. Therefore, $\mathit{A^{*}A\left(E\right)\subseteq E}$.
By Lemma \ref{Lemma 4.4}, it follows that $A\left(E\right)\perp A\left(E^{\perp}\right)$.
Note that for every $x\in E$, 
\[
\left\Vert Ax\right\Vert ^{2}=\left\langle Ax,Ax\right\rangle =\left\langle A^{*}Ax,x\right\rangle =\left\langle x,x\right\rangle =\left\Vert x\right\Vert ^{2}\cdot
\]
 Hence, $A$ is isometric on $E$. Conversely, assume that $A$ is
isometric on $E$ and $A\left(E\right)\perp A\left(E^{\perp}\right)$.
Since $A\left(E\right)\perp A\left(E^{\perp}\right)$, it follows,
by Lemma \ref{Lemma 4.4}, that 
\[
\left(A^{*}A\right)\mid_{E}=A^{*}\mid_{A\left(E\right)}A\mid_{E}=\left(A\mid_{E}\right)^{*}A\mid_{E}\cdot
\]
Since, $A\mid_{E}$ is isometric on $E$, it follows that $\left(A\mid_{E}\right)^{*}A\mid_{E}={\bf Id_{\mathit{}}}$.
Therefore, $A^{*}A\mid_{E}={\bf Id_{\mathit{}}}$. Hence, $E\subseteq E_{1}\left(A\right)$.
\end{proof}
\begin{notation}
If $S\subseteq\mathbb{C}^{d}$ is a linear subspace, then $P_{S}:\mathbb{C}^{d}\to\mathbb{C}^{d}$
denotes the orthogonal projection onto $S$.
\end{notation}

\begin{prop}
\label{Proposition 4.8}Let $A:\mathbb{C^{\mathit{d}}}\longrightarrow\mathbb{C^{\mathit{d}}}$
be a linear  transformation. Let $E\subseteq\mathbb{C^{\mathit{d}}}$
be a subspace s.t. $E\subseteq E_{1}\left(A\right)$. Then, for every
$x\in\mathbb{C^{\mathit{d}}}$,
\[
\left\Vert Ax\right\Vert =\left\Vert x\right\Vert 
\]
 if and only if 
\[
\left\Vert AP_{E^{\perp}}\left(x\right)\right\Vert =\left\Vert P_{E^{\perp}}\left(x\right)\right\Vert \cdot
\]
\end{prop}

\begin{proof}
Let $x\in\mathbb{C^{\mathit{d}}}$. Then, $x=P_{E}\left(x\right)+P_{E^{\perp}}\left(x\right)$
and $Ax=AP_{E}\left(x\right)+AP_{E^{\perp}}\left(x\right)$. Since
$E\subseteq E_{1}\left(A\right)$, it follows, by Proposition \ref{Proposition 4.6},
that $\left\Vert AP_{E}\left(x\right)\right\Vert =\left\Vert P_{E}\left(x\right)\right\Vert $
and $AP_{E}\left(x\right)\perp AP_{E^{\perp}}\left(x\right)$. Therefore,
\[
\left\Vert x\right\Vert ^{2}=\left\Vert P_{E}\left(x\right)\right\Vert ^{2}+\left\Vert P_{E^{\perp}}\left(x\right)\right\Vert ^{2}
\]
 and

\[
\left\Vert Ax\right\Vert ^{2}=\left\Vert P_{E}\left(x\right)\right\Vert ^{2}+\left\Vert AP_{E^{\perp}}\left(x\right)\right\Vert ^{2}\cdot
\]
 This yields that 
\[
\left\Vert Ax\right\Vert ^{2}-\left\Vert x\right\Vert ^{2}=\left\Vert AP_{E^{\perp}}\left(x\right)\right\Vert ^{2}-\left\Vert P_{E^{\perp}}\left(x\right)\right\Vert ^{2}\cdot
\]
It follows that, $\left\Vert Ax\right\Vert =\left\Vert x\right\Vert $
if and only if $\left\Vert AP_{E^{\perp}}\left(x\right)\right\Vert =\left\Vert P_{E^{\perp}}\left(x\right)\right\Vert $.
\end{proof}
\begin{prop}
\label{Proposition 4.9}Let $A:\mathbb{C^{\mathit{d}}}\to\mathbb{C^{\mathit{d}}}$
be a linear  transformation and let $S\subseteq\mathbb{C^{\mathit{d}}}$
a linear subspace. If $E\subseteq E_{1}\left(A\right)$ is a subspace
then the following are equivalent:
\end{prop}

\begin{enumerate}
\item $A$ is isometric on $S$.
\item $A$ is isometric on $E\oplus P_{E^{\perp}}\left(S\right)$.
\end{enumerate}
\begin{proof}
Let $S\subseteq\mathbb{C^{\mathit{d}}}$ be a subspace. If $A$ is
isometric on $S$ then, by Proposition \ref{Proposition 4.8}, $A$
is isometric on $P_{E^{\perp}}\left(S\right)$. Since $E\subseteq E_{1}\left(A\right)$,
it follows, by Proposition \ref{Proposition 4.6}, that $A$ is isometric
on $E$ and $AP_{E^{\perp}}\left(S\right)\perp AE$. Therefore, $A$
is isometric on $E\oplus P_{E^{\perp}}\left(S\right)\cdot$ Conversely,
note that $S\subseteq E\oplus P_{E^{\perp}}\left(S\right)$. Therefore,
if $A$ is isometric on $E\oplus P_{E^{\perp}}\left(S\right)$ then
it is isometric on $S$. 
\end{proof}
\begin{thm}
\label{Theorem 4.10.}Let $M=\cup_{i=1}^{k}M_{i}$ be a union of subspaces
of $\mathbb{C}^{d}$ s.t. $\sum_{i=1}^{k}M_{i}=\mathbb{C}^{d}$. Suppose
that $A:\mathbb{C}^{d}\longrightarrow\mathbb{C}^{d}$ is a linear
transformation, which is isometric on $M$. Then, 
\[
\cap_{i=1}^{k}M_{i}\subseteq E_{1}\left(A\right)\cdot
\]
\end{thm}

\begin{proof}
Set $E:=\cap_{i=1}^{k}M_{i}$. Since $E\subseteq M$, it follows that
$A$ is isometric on $E$. By Proposition \ref{Proposition 4.6},
it remains to show that $AE^{\perp}\perp AE$: Note that 
\[
\mathbb{C}^{d}=\sum_{i=1}^{k}M_{i}=\sum_{i=1}^{k}\left(E\oplus P_{E^{\perp}}M_{i}\right)=E\oplus\sum_{i=1}^{k}P_{E^{\perp}}M_{i}.
\]
 It follows that $E^{\perp}=\sum_{i=1}^{k}P_{E^{\perp}}M_{i}$. Now,
since for every $i\in\left\{ 1,\ldots,k\right\} $, $A$ is isometric
on $M_{i}$ and $E\subseteq M_{i}$, it follows that $AP_{E^{\perp}}M_{i}\perp AE$
for all $i\in\left\{ 1,\ldots,k\right\} $. Therefore, $AE^{\perp}\perp AE$.
\end{proof}

\subsection{Maximal linear subspaces on which a given invertible linear  transformation
acts isometrically}
\begin{notation}
For any linear  transformation $A:\mathbb{C^{\mathit{d}}}\longrightarrow\mathbb{C^{\mathit{d}}}$
and a subspace $E\subseteq\mathbb{C^{\mathit{d}}}$, we will use the
following notations.
\end{notation}

\begin{itemize}
\item $E_{\lambda}\left(A\right):=ker\left(A^{*}A-\lambda^{2}{\bf Id_{\mathbb{C}^{\mathit{d}}}}\right)$,
for any $\lambda\in\mathbb{C}$.
\item $E_{+}\left(A\right):=\sum_{\lambda\in\sigma\left(\sqrt{A^{*}A}\right):\lambda>1}E_{\lambda}\left(A\right)$.
\item $E_{-}\left(A\right):=\sum_{\lambda\in\sigma\left(\sqrt{A^{*}A}\right):\lambda<1}E_{\lambda}\left(A\right)$.
\end{itemize}
\begin{prop}
\label{Proposition 3.2}Let $A:\mathbb{C^{\mathit{d}}}\longrightarrow\mathbb{C^{\mathit{d}}}$
be a linear  transformation. Then,
\end{prop}

\begin{enumerate}
\item $\left\Vert Ax\right\Vert >\left\Vert x\right\Vert $ for all $0\neq x\in E_{+}\left(A\right)$.
\item $\left\Vert Ax\right\Vert <\left\Vert x\right\Vert $ for all $0\neq x\in E_{-}\left(A\right)$.
\end{enumerate}
\begin{proof}
Let $A:\mathbb{C^{\mathit{d}}}\longrightarrow\mathbb{C^{\mathit{d}}}$
be a linear  transformation.
\begin{enumerate}
\item Let $0\neq x\in E_{+}\left(A\right)$. Then, 
\[
x=\sum_{\lambda\in\sigma\left(\sqrt{A^{*}A}\right):\lambda>1}P_{E_{\lambda}\left(A\right)}\left(x\right)
\]
 and 
\[
A^{*}Ax=\sum_{\lambda\in\sigma\left(\sqrt{A^{*}A}\right):\lambda>1}\lambda^{2}P_{E_{\lambda}\left(A\right)}\left(x\right)
\]
Therefore, 
\begin{flalign*}
\left\Vert Ax\right\Vert ^{2}-\left\Vert x\right\Vert ^{2} & =\left\langle A^{*}Ax-x,x\right\rangle \\
 & =\left\langle \sum_{\lambda\in\sigma\left(\sqrt{A^{*}A}\right):\lambda>1}\left(\lambda^{2}-1\right)P_{E_{\lambda}}\left(x\right),\sum_{\lambda\in\sigma\left(\sqrt{A^{*}A}\right):\lambda>1}P_{E_{\lambda}}\left(x\right)\right\rangle \\
 & =\sum_{\lambda\in\sigma\left(\sqrt{A^{*}A}\right):\lambda>1}\left(\lambda^{2}-1\right)\left\Vert P_{E_{\lambda}}\left(x\right)\right\Vert ^{2}>0\cdot\\
\end{flalign*}
\item The proof is the same as that of $\left(1\right)$.
\end{enumerate}
\end{proof}

\begin{cor}
\label{Corollary 3.3}Let $A:\mathbb{C^{\mathit{d}}}\longrightarrow\mathbb{C^{\mathit{d}}}$
be an\textbf{ }invertible linear  transformation. Suppose that $A$
is isometric on a subspace $S\subseteq\mathbb{C^{\mathit{d}}}$. Then,
\[
\dim S\leq\dim E_{1}\left(A\right)+\min\left\{ \dim E_{-}\left(A\right),\dim E_{+}\left(A\right)\right\} \leq\frac{d+\dim E_{1}\left(A\right)}{2}\cdot
\]
\end{cor}

\begin{proof}
Since $A$ is isometric on $S$, it follows, by Proposition \ref{Proposition 3.2},
that $S\cap E_{+}\left(A\right)=\left\{ 0\right\} $ and $S\cap E_{-}\left(A\right)=\left\{ 0\right\} $.
Therefore, 
\[
\dim\left(S\right)\leq d-\dim E_{+}\left(A\right)
\]
 and 
\[
\dim\left(S\right)\leq d-\dim E_{-}\left(A\right)\cdot
\]
Now, note that 
\[
d=\dim E_{-}\left(A\right)+\dim E_{1}\left(A\right)+\dim E_{+}\left(A\right)\cdot
\]
 Therefore,
\[
\dim\left(S\right)\leq\dim E_{1}\left(A\right)+\dim E_{-}\left(A\right)
\]
 and 
\[
\dim\left(S\right)\leq\dim E_{1}\left(A\right)+\dim E_{+}\left(A\right)\cdot
\]
 It follows that 
\[
\dim S\leq\dim E_{1}\left(A\right)+\min\left\{ \dim E_{-}\left(A\right),\dim E_{+}\left(A\right)\right\} \cdot
\]
 Finally, note that 
\[
\min\left\{ \dim E_{-}\left(A\right),\dim E_{+}\left(A\right)\right\} \leq\frac{d-\dim E_{1}\left(A\right)}{2}\cdot
\]
 Therefore, 
\[
\dim S\leq\dim E_{1}\left(A\right)+\min\left\{ \dim E_{-}\left(A\right),\dim E_{+}\left(A\right)\right\} \leq\frac{d+\dim E_{1}\left(A\right)}{2}\cdot
\]
\end{proof}
\begin{cor}
\label{Corollary 3.4}Let $A:\mathbb{C^{\mathit{d}}}\longrightarrow\mathbb{C^{\mathit{d}}}$
be an invertible linear  transformation. Suppose that $A$ is isometric
on a subspace $S\subseteq\mathbb{C^{\mathit{d}}}$ with 
\[
\dim S>\min\left\{ \dim E_{-}\left(A\right),\dim E_{+}\left(A\right)\right\} \cdot
\]
Then, $E_{1}\left(A\right)\neq\left\{ 0\right\} $. In particular,
if $\dim S>\frac{d}{2}$, then $E_{1}\left(A\right)\neq\left\{ 0\right\} $. 
\end{cor}

\begin{proof}
By Corollary \ref{Corollary 3.3}, it follows that $0<\dim E_{1}\left(A\right)$.
\end{proof}
\begin{prop}
\label{Proposition 3.5}Let $A:\mathbb{C^{\mathit{d}}}\longrightarrow\mathbb{C^{\mathit{d}}}$
be an invertible linear  transformation. Suppose that $A$ is isometric
on a subspace $S\subseteq\mathbb{C^{\mathit{d}}}$. Then,
\[
\left\Vert A\right\Vert =1\Longrightarrow S\subseteq E_{1}\left(A\right)\cdot
\]
\end{prop}

\begin{proof}
Since $A$ is isometric on $S$, it follows, by Theorem \ref{Proposition 4.9},
that $A$ is isometric on $M:=E_{1}\left(A\right)\oplus P_{E_{1}^{\perp}\left(A\right)}\left(S\right)$.
Therefore, by Proposition \ref{Proposition 3.2}, $M\cap E_{-}\left(A\right)=\left\{ 0\right\} $.
Therefore, 
\[
\dim E_{1}\left(A\right)\leq\dim M\leq d-\dim E_{-}\left(A\right)=\dim E_{+}+\dim E_{1}\left(A\right)\cdot
\]
Since $\left\Vert A\right\Vert =1$, we have $\left\Vert A^{*}A\right\Vert =\left\Vert A\right\Vert ^{2}=1$.
Therefore, $\dim E_{+}\left(A\right)=0$. It follows that $\dim M=\dim E_{1}\left(A\right)$.
Now, since $E_{1}\left(A\right)\subseteq M$, it follows that $E_{1}\left(A\right)=M$.
Finally, note that $S\subseteq M$.
\end{proof}
\begin{thm}
Let $M=\cup_{i=1}^{k}M_{i}$ be a union of subspaces of $\mathbb{C}^{d}$
s.t. $\sum_{i=1}^{k}M_{i}=\mathbb{C}^{d}$. Suppose that $A:\mathbb{C}^{d}\longrightarrow\mathbb{C}^{d}$
is an invertible linear transformation, which is isometric on $M$.
If $\left\Vert A\right\Vert =1$ then $A$ is a unitary. 
\end{thm}

\begin{proof}
By Proposition \ref{Proposition 3.5}, it follows that $M\subseteq E_{1}\left(A\right)$.
Therefore, $\mathbb{C}^{d}=\sum_{i=1}^{k}M_{i}\subseteq E_{1}\left(A\right)$.Therefore,
$E_{1}\left(A\right)=\mathbb{C^{\mathit{d}}}$ . Hence, $A$ is a
unitary.
\end{proof}
\begin{prop}
\label{Proposition 3.8}Let $A:\mathbb{C^{\mathit{d}}}\longrightarrow\mathbb{C^{\mathit{d}}}$
be a linear  transformation. Let $E\subseteq\mathbb{C^{\mathit{d}}}$
be a subspace s.t. $E\subseteq E_{1}\left(A\right)$. Then,
\end{prop}

\[
E_{1}\left(A\mid_{E^{\perp}}\right):={\bf \textrm{ker}\left(\mathit{\left(A\mid_{E^{\perp}}\right)^{*}A\mid_{E^{\perp}}-{\bf Id_{\mathit{E^{\perp}}}}}\right)\mathit{=E^{\perp}\cap E_{1}\left(A\right)}}\cdot
\]
 In particular, $E_{1}\left(A\mid_{E_{1}^{\perp}\left(A\right)}\right)=\left\{ 0\right\} $.
\begin{proof}
Since $E\subseteq E_{1}\left(A\right)$, it follows, by Proposition
\ref{Proposition 4.6}, that $A\left(E\right)\perp A\left(E^{\perp}\right)$.
It follows, by Lemma \ref{Lemma 4.4}, that 
\[
\left(A\mid_{E^{\perp}}\right)^{*}=A^{*}\mid_{A\left(E^{\perp}\right)}\cdot
\]
 Therefore, {\small{}
\[
{\bf \textrm{ker}\left(\mathit{\left(A\mid_{E^{\perp}}\right)^{*}A\mid_{E^{\perp}}-{\bf Id_{\mathit{E^{\perp}}}}}\right)=\ker\mathit{\left(\left(A^{*}A-{\bf Id}\right)\mid_{E^{\perp}}\right)}\mathit{=}}E^{\perp}\cap E_{1}\left(A\right)\cdot
\]
}{\small\par}
\end{proof}
\begin{lem}
\label{lemma 3.8}Let $M=M_{1}\cup M_{2}$ be a union of subspaces
of $\mathbb{C^{\mathit{d}}}$ s.t. $M_{1}+M_{2}=\mathbb{C}^{\mathit{d}}$.
Suppose that $A:\mathbb{C}^{d}\longrightarrow\mathbb{C}^{d}$ is an
invertible linear transformation, which is isometric on $M_{1}\cup M_{2}$.
Then, 
\end{lem}

\[
\dim P_{E_{1}^{\perp}\left(A\right)}\left(M_{1}\right)=\dim P_{E_{1}^{\perp}\left(A\right)}\left(M_{2}\right)=\frac{\dim E_{1}^{\perp}\left(A\right)}{2}\cdot
\]

\begin{proof}
First, by Proposition \ref{Proposition 3.8}, we have $E_{1}\left(A\mid_{E_{1}^{\perp}\left(A\right)}\right)=\left\{ 0\right\} $.
Second, by Proposition \ref{Proposition 4.9}, $A\mid_{E_{1}^{\perp}\left(A\right)}$
is isometric on 
\[
P_{E_{1}^{\perp}\left(A\right)}\left(M_{1}\right)\cup P_{E_{1}^{\perp}\left(A\right)}\left(M_{2}\right)\cdot
\]
Also, since $P_{E_{1}^{\perp}\left(A\right)}\left(M_{1}\right)+P_{E_{1}^{\perp}\left(A\right)}\left(M_{2}\right)=E_{1}^{\perp}\left(A\right)$,
it follows, by Theorem \ref{Theorem 4.10.}, that
\[
P_{E_{1}^{\perp}\left(A\right)}\left(M_{1}\right)\cap P_{E_{1}^{\perp}\left(A\right)}\left(M_{2}\right)\subseteq E_{1}\left(A\mid_{E_{1}^{\perp}\left(A\right)}\right)=\left\{ 0\right\} \cdot
\]
Therefore,
\[
\dim P_{E_{1}^{\perp}\left(A\right)}\left(M_{1}\right)+\dim P_{E_{1}^{\perp}\left(A\right)}\left(M_{2}\right)=\dim E_{1}^{\perp}\left(A\right)\cdot
\]
 Without loss of generality, assume that $\dim P_{E_{1}^{\perp}\left(A\right)}\left(M_{1}\right)<\dim P_{E_{1}^{\perp}\left(A\right)}\left(M_{2}\right)$,
then 
\[
2\dim P_{E_{1}^{\perp}\left(A\right)}\left(M_{2}\right)>\dim E_{1}^{\perp}\left(A\right)
\]
or
\[
\dim P_{E_{1}^{\perp}\left(A\right)}\left(M_{2}\right)>\frac{\dim E_{1}^{\perp}\left(A\right)}{2}\cdot
\]
Since $A\mid_{E_{1}^{\perp}\left(A\right)}$ is isometric on $P_{E_{1}^{\perp}\left(A\right)}\left(M_{2}\right)$,
it follows, by Corollary \ref{Corollary 3.4}, that $E_{1}\left(A\mid_{E_{1}^{\perp}\left(A\right)}\right)\neq\left\{ 0\right\} $.
But, $E_{1}\left(A\mid_{E_{1}^{\perp}\left(A\right)}\right)=\left\{ 0\right\} $.
Therefore, 
\[
\dim P_{E_{1}^{\perp}\left(A\right)}\left(M_{1}\right)=\dim P_{E_{1}^{\perp}\left(A\right)}\left(M_{2}\right)=\frac{\dim E_{1}^{\perp}\left(A\right)}{2}\cdot
\]
 
\end{proof}
\begin{lem}
\label{lemma 3.9}Let $r,s,d\in\mathbb{N}$ s.t. $r+s\leq d$. Let
$\lambda_{1},\ldots,\lambda_{d}\in\mathbb{R}$ s.t. 
\[
\lambda_{1}\geq\ldots\geq\lambda_{r}>1>\lambda_{r+1}\geq\ldots\geq\lambda_{r+s}>0
\]
and 
\[
\lambda_{i}=1,\textrm{for all }i\in\left\{ r+s+1,\ldots,d\right\} \cdot
\]
Let $A:\mathbb{C}^{d}\longrightarrow\mathbb{C}^{d}$ be the linear
transformation, which is defined by 
\[
A\left(e_{i}\right)=\lambda_{i}e_{i},\quad i\in\left\{ 1,\ldots,d\right\} \cdot
\]
Then, there exists a subspace $S\subseteq\mathbb{C}^{d}$ of $\dim S=\left(d-\left(r+s\right)\right)+\min\left\{ r,s\right\} $
s.t. $A$ is isometric on $S$.
\end{lem}

\begin{proof}
Assume that $r=\min\left\{ r,s\right\} $ (the proof of the case $s=\min\left\{ r,s\right\} $
will be the same). Let $\alpha_{1},\ldots,\alpha_{r}\in[0,2\pi)$.
Let $S\subseteq\mathbb{C}^{d}$ be the set of all $\left(x_{1},\ldots,x_{d}\right)\in\mathbb{C}^{d}$
s.t. for every $i\in\left\{ 1,\ldots,r\right\} $, 
\[
x_{r+i}=\frac{\sqrt{\lambda_{i}^{2}-1}}{\sqrt{1-\lambda_{r+i}^{2}}}e^{i\alpha_{i}}x_{i},\text{for all {\it i\ensuremath{\in}\ensuremath{\left\{  1,\ldots,r\right\} } }}
\]
and 
\[
x_{i}=0\text{, for all {\it i\ensuremath{\in}\ensuremath{\left\{  2r+1,\ldots,r+s\right\} } }\ensuremath{\cdot}}
\]
Note that for every $\left(x_{1},\ldots,x_{d}\right)\in\mathbb{C}^{d}$,
\[
\left\Vert Ax\right\Vert ^{2}=\left\Vert x\right\Vert ^{2}
\]
 if and only if 
\[
\sum_{i=1}^{d}\left(\lambda_{i}^{2}-1\right)\left|x_{i}\right|^{2}=0\cdot
\]
Also, note that for every $i\in\left\{ 1,\ldots,r\right\} $, 
\[
\left|x_{r+i}\right|^{2}=\frac{\lambda_{i}^{2}-1}{1-\lambda_{r+i}^{2}}\left|x_{i}\right|^{2}\cdot
\]
 Now, let $\left(x_{1},\ldots,x_{d}\right)\in S$, then 
\begin{eqnarray*}
\sum_{i=1}^{d}\left(\lambda_{i}^{2}-1\right)\left|x_{i}\right|^{2} & = & \sum_{i=1}^{2r}\left(\lambda_{i}^{2}-1\right)\left|x_{i}\right|^{2}\\
 & = & \sum_{i=1}^{r}\left(\lambda_{i}^{2}-1\right)\left|x_{i}\right|^{2}+\sum_{i=1}^{r}\left(\lambda_{r+i}^{2}-1\right)\left|x_{r+i}\right|^{2}\\
 & = & \sum_{i=1}^{r}\left(\lambda_{i}^{2}-1\right)\left|x_{i}\right|^{2}-\sum_{i=1}^{r}\left(\lambda_{i}^{2}-1\right)\left|x_{i}\right|^{2}=0\cdot
\end{eqnarray*}
Therefore, $A$ is isometric on $S$ and $\dim S=\left(d-\left(r+s\right)\right)+\min\left\{ r,s\right\} $.
\end{proof}
\begin{cor}
\label{Corollary 3.10}Let $A:\mathbb{C}^{d}\longrightarrow\mathbb{C}^{d}$
be an invertible linear transformation. Then, $A$ is isometric on
a subspace $E\subseteq\mathbb{C}^{d}$ of 
\[
\dim E=\dim E_{1}\left(A\right)+\min\left\{ \dim E_{+}\left(A\right),\dim E_{-}\left(A\right)\right\} \cdot
\]
\end{cor}

\begin{proof}
Let $U:=A\left(\sqrt{A^{*}A}\right)^{-1}$. Then, $U$ is a unitary
and $A=U\sqrt{A^{*}A}$. Since $\sqrt{A^{*}A}$ is positive, it follows
that there exists a diagonal linear transformation $D:\mathbb{C}^{d}\longrightarrow\mathbb{C}^{d}$
as in Lemma \ref{lemma 3.9} and a unitary $U_{1}:\mathbb{C}^{d}\longrightarrow\mathbb{C}^{d}$
s.t. $\sqrt{A^{*}A}=U_{1}^{*}DU_{1}$. By Lemma \ref{lemma 3.9},
there exists a subspace $S\subseteq\mathbb{C}^{d}$ of 
\[
\dim S=\dim E_{1}\left(A\right)+\min\left\{ \dim E_{+}\left(A\right),\dim E_{-}\left(A\right)\right\} \cdot
\]
s.t. $D$ is isometric on $S$. It follows that $\sqrt{A^{*}A}$ is
isometric on $E:=U_{1}^{*}\left(S\right)$. Therefore, $A$ is isometric
on $E$.
\end{proof}
\begin{thm}
\label{Theorem 3.11}Let $M\subseteq\mathbb{C}^{d}$ be a linear subspace.
Suppose that $A:\mathbb{C}^{d}\longrightarrow\mathbb{C}^{d}$ is an
invertible linear transformation, which is isometric on $M$. Then,
there exists a subspace $S\left(M\right)\subseteq\mathbb{C}^{d}$
s.t. 
\end{thm}

\begin{enumerate}
\item $M\subseteq S\left(M\right)$.
\item $A$ is isometric on $S\left(M\right)$.
\item $\dim S\left(M\right)=\dim E_{1}\left(A\right)+\min\left\{ \dim E_{+}\left(A\right),\dim E_{-}\left(A\right)\right\} $.
\end{enumerate}
\begin{proof}
By Corollary \ref{Corollary 3.10}, $A$ is isometric on a subspace
$S$ with 
\[
\dim S=\dim E_{1}\left(A\right)+\min\left\{ \dim E_{+}\left(A\right),\dim E_{-}\left(A\right)\right\} \cdot
\]
Note that $A\mid_{S+M}$ is isometric on $S\cup M$. By Theorem \ref{Proposition 4.9},
it follows that $A\mid_{S+M}$ is isometric on 
\[
P_{\left(S+M\right)\ominus E_{1}\left(A\mid_{S+M}\right)}\left(S\right)\oplus E_{1}\left(A\mid_{S+M}\right)
\]
 and on 
\[
S\left(M\right):=P_{\left(S+M\right)\ominus E_{1}\left(A\mid_{S+M}\right)}\left(M\right)\oplus E_{1}\left(A\mid_{S+M}\right)\cdot
\]
Note that 
\[
S\subseteq P_{\left(S+M\right)\ominus E_{1}\left(A\mid_{S+M}\right)}\left(S\right)\oplus E_{1}\left(A\mid_{S+M}\right)\cdot
\]
 It follows, by Corollary \ref{Corollary 3.3}, that
\[
S=P_{\left(S+M\right)\ominus E_{1}\left(A\mid_{S+M}\right)}\left(S\right)\oplus E_{1}\left(A\mid_{S+M}\right)\cdot
\]
By Lemma \ref{lemma 3.8}, it follows that $\dim S\left(M\right)=\dim S$.
Therefore, 
\[
\dim S\left(M\right)=\dim E_{1}\left(A\right)+\min\left\{ \dim E_{+}\left(A\right),\dim E_{-}\left(A\right)\right\} \cdot
\]
\end{proof}
\begin{defn}
Let $M\subseteq\mathbb{C}^{d}$ be a linear subspace. Suppose that
$A:\mathbb{C}^{d}\longrightarrow\mathbb{C}^{d}$ is an invertible
linear transformation, which is isometric on $M$. We say that $M$
is \textbf{maximal with respect to} $A$, if for every subspace $S\subseteq\mathbb{C}^{d}$
s.t. $M\subseteq S$, the following holds: $A$ is isometric on $S$
implies $S=M$.
\end{defn}

\begin{thm}
Let $M\subseteq\mathbb{C}^{d}$ be a linear subspace. Suppose that
$A:\mathbb{C}^{d}\longrightarrow\mathbb{C}^{d}$ is an invertible
linear transformation, which is isometric on $M$. Then $M$ is maximal
with respect to $A$ if and only if 
\[
\dim M=\dim E_{1}\left(A\right)+\min\left\{ \dim E_{+}\left(A\right),\dim E_{-}\left(A\right)\right\} \cdot
\]
\end{thm}

\begin{proof}
If $\dim M=\dim E_{1}\left(A\right)+\min\left\{ \dim E_{+}\left(A\right),\dim E_{-}\left(A\right)\right\} $
then, by Corollary \ref{Corollary 3.3}, it follows that $S$ is maximal
with respect to $A$. Conversely, suppose that $M$ is maximal with
respect to $A$. Then, by Theorem \ref{Theorem 3.11}, there exists
a subspace $S\left(M\right)\subseteq\mathbb{C}^{d}$ s.t. $M\subseteq S\left(M\right)$,
$A$ is isometric on $S\left(M\right)$ and 
\[
\dim S\left(M\right)=\dim E_{1}\left(A\right)+\min\left\{ \dim E_{+}\left(A\right),\dim E_{-}\left(A\right)\right\} \cdot
\]
Since $M$ is maximal, it follows that $M=S\left(M\right)$. Therefore,
\[
\dim M=\dim E_{1}\left(A\right)+\min\left\{ \dim E_{+}\left(A\right),\dim E_{-}\left(A\right)\right\} \cdot
\]
\end{proof}
\begin{prop}
\label{Proposition 4.7}Let $M=\cup_{i=1}^{k}M_{i}$ be a union of
linear subspaces of $\mathbb{C}^{d}$ s.t. $\sum_{i=1}^{k}M_{i}=\mathbb{C}^{d}$.
Suppose that $A:\mathbb{C}^{d}\longrightarrow\mathbb{C}^{d}$ is an
invertible linear transformation, which is isometric on $M$. Suppose
that exists $m\in\mathbb{N}$ s.t. for every $i\in\left\{ 1,\ldots,k\right\} $,
$\dim M_{i}=m$. If $\dim M_{i}\cap M_{j}=m-1$ for all $i\neq j$
then
\end{prop}

\[
\dim E_{1}\left(A\right)\geq m-1\cdot
\]

\begin{proof}
Choose $r\in\left\{ 2,\ldots,k\right\} $ s.t. $\sum_{i=1}^{r}M_{i}=\mathbb{C}^{d}$
with $M_{r}\nsubseteq\sum_{i=1}^{r-1}M_{i}$. Then, 
\[
\dim M_{r}\cap\sum_{i=1}^{r-1}M_{i}\leq m-1\cdot
\]
Now, let $j\in\left\{ 1,\ldots,r-1\right\} $. Then, 
\[
m-1=\dim M_{r}\cap M_{j}\leq\dim M_{r}\cap\sum_{i=1}^{r-1}M_{i}\leq m-1\cdot
\]
It follows that for every $j\in\left\{ 1,\ldots,r-1\right\} $,
\[
M_{r}\cap M_{j}=M_{r}\cap\sum_{i=1}^{r-1}M_{i}\cdot
\]
Therefore, 
\[
\cap_{i=1}^{r}M_{i}=\cap_{j=1}^{r-1}\left(M_{r}\cap M_{j}\right)=M_{r}\cap\sum_{i=1}^{r-1}M_{i}\cdot
\]
Since $\dim M_{r}\cap\sum_{i=1}^{r-1}M_{i}=m-1$, it follows that
$\dim\cap_{i=1}^{r}M_{i}=m-1\cdot$ Since $\sum_{i=1}^{r}M_{i}=\mathbb{C}^{d}$,
it follows, by Theorem \ref{Theorem 4.10.}, that $\cap_{i=1}^{r}M_{i}\subseteq E_{1}\left(A\right)$.
Therefore, $\dim E_{1}\left(A\right)\geq m-1$.
\end{proof}
\begin{prop}
Let $k,d\in\mathbb{N}$ s.t. $k\geq2$. Let $M=\cup_{i=1}^{k}M_{i}$
be a union of subspaces of $\mathbb{C}^{d}$ with $\sum_{i=1}^{k}M_{i}=\mathbb{C}^{d}$
s.t. for every $i\in\left\{ 1,\ldots,k\right\} $, $M_{i}\nsubseteq\sum_{j\neq i}M_{j}$.
Suppose that exists $m\in\mathbb{N}$ s.t. for every $i\in\left\{ 1,\ldots,k\right\} $,
$\dim M_{i}=m$. Then, the following are equivalent:
\end{prop}

\begin{enumerate}
\item $\dim\cap_{i=1}^{k}M_{i}=m-1\cdot$
\item For every $i,j\in\left\{ 1,\ldots,k\right\} $, if $i\neq j$ then
$\dim M_{i}\cap M_{j}=m-1$.
\end{enumerate}
\begin{proof}
The implication $\left(1\right)\Longrightarrow\left(2\right)$ is
clear (note that for every $i,j\in\left\{ 1,\ldots,k\right\} $, if
$i\neq j$ then $\cap_{i=1}^{k}M_{i}\subseteq M_{i}\cap M_{j}$).
For the converse, see the proof of Proposition \ref{Proposition 4.7}.
\end{proof}
\begin{cor}
\label{Corollary 3.16} Let $k,d\in\mathbb{N}$ s.t. $k\geq2$. Let
$M=\cup_{i=1}^{k}M_{i}$ be a union of subspaces of $\mathbb{C}^{d}$
with $\sum_{i=1}^{k}M_{i}=\mathbb{C}^{d}$ s.t. for every $i\in\left\{ 1,\ldots,k\right\} $,
$M_{i}\nsubseteq\sum_{j\neq i}M_{j}$. Suppose that for every $i\in\left\{ 1,\ldots,k\right\} $,
$\dim M_{i}=2$. Then, the following are equivalent:
\end{cor}

\begin{enumerate}
\item $\cap_{i=1}^{k}M_{i}\neq\left\{ 0\right\} \cdot$
\item For every $i,j\in\left\{ 1,\ldots,k\right\} $, if $i\neq j$ then
$M_{i}\cap M_{j}\neq\left\{ 0\right\} $.
\end{enumerate}

\subsection{\label{subsec:Examples-of-tractable}Examples of tractable unions
of linear subspaces of $\mathbb{C}^{d}$}
\begin{example}
Let $d,k\geq2$. Let $M=\cup_{i=1}^{k}M_{i}$ be a union of subspaces
of $\mathbb{C}^{d}$ s.t. $\sum_{i=1}^{k}M_{i}=\mathbb{C}^{d}$. Suppose
that for every $i,j\in\left\{ 1,\ldots,k\right\} $, 
\[
i\neq j\Longrightarrow M_{i}\cap M_{j}=\left\{ 0\right\} \cdot
\]
 Then, $M$ is a tractable union of linear subspaces of \textbf{$\mathbb{C}^{d}$.}
\end{example}

\begin{example}
Let $d,k\geq2$. Let $M=\cup_{i=1}^{k}M_{i}$ be a union of subspaces
of $\mathbb{C}^{d}$ s.t. $\sum_{i=1}^{k}M_{i}=\mathbb{C}^{d}$. Suppose
that there exists a subspace $\left\{ 0\right\} \neq E\subseteq\mathbb{C}^{d}$
such that for every $i,j\in\left\{ 1,\ldots,k\right\} $, 
\[
i\neq j\Longrightarrow M_{i}\cap M_{j}=E\cdot
\]
 Then, $M$ is a tractable union of linear subspaces of \textbf{$\mathbb{C}^{d}$.}
\end{example}

\begin{proof}
Note that $M=E\oplus\left(\cup_{i=1}^{k}P_{E^{\perp}}M_{i}\right)$
and $\sum_{i=1}^{k}P_{E^{\perp}}M_{i}=E^{\perp}$. Moreover, for every
$i,j\in\left\{ 1,\ldots,k\right\} $, if $i\neq j$ then 
\[
P_{E^{\perp}}M_{i}\cap P_{E^{\perp}}M_{j}=\left\{ 0\right\} \cdot
\]
 Therefore, $\cup_{i=1}^{k}P_{E^{\perp}}M_{i}$ is a tractable union
of linear subspaces of $E^{\perp}$. It follows that $M$ is a tractable
union of linear subspaces of \textbf{$\mathbb{C}^{d}$.}
\end{proof}
\begin{example}
Let $d,k\geq2$. Let $M=\cup_{i=1}^{k}M_{i}$ be a union of subspaces
of $\mathbb{C}^{d}$ s.t. $\sum_{i=1}^{k}M_{i}=\mathbb{C}^{d}$. Suppose
that for every for every $i,j\in\left\{ 1,\ldots,k\right\} $, 
\[
i\neq j\Longrightarrow M_{i}+M_{j}=\mathbb{C}^{d}\cdot
\]
 Then, $M$ is a tractable union of linear subspaces of \textbf{$\mathbb{C}^{d}$.}
\end{example}

\begin{proof}
We will prove the assertion by induction on $d\geq2$: If $d=2$ then
for every $i\in\left\{ 1,\ldots,k\right\} $, $\dim M_{i}=1$. Therefore,
for every $i,j\in\left\{ 1,\ldots,k\right\} $, $i\neq j\Longrightarrow M_{i}\cap M_{j}=\left\{ 0\right\} $.
That is, $M$ is a tractable union of linear subspaces of $\mathbb{C}^{d}$.
Now, assume that the assertion holds for all $d=2,\ldots,n$ for some
$n\in\mathbb{N}$ s.t. $n\geq2$. We will show that the assertion
holds for $d=n+1$: Let $k\geq2$ and let $M=\cup_{i=1}^{k}M_{i}$
be a union of subspaces of $\mathbb{C}^{n+1}$ s.t. $\sum_{i=1}^{k}M_{i}=\mathbb{C}^{n+1}$.
Assume that for every for every $i,j\in\left\{ 1,\ldots,k\right\} $,
\[
i\neq j\Longrightarrow M_{i}+M_{j}=\mathbb{C}^{n+1}\cdot
\]
If for every $i,j\in\left\{ 1,\ldots,k\right\} $, $i\neq j\Longrightarrow M_{i}\cap M_{j}=\left\{ 0\right\} $
then $M$ is a tractable union of subspaces of $\mathbb{C}^{n+1}$.
Else, for every $i,j\in\left\{ 1,\ldots,k\right\} $, set $E_{ij}=M_{i}\cap M_{j}$.
Also, set $E:=\sum_{i=1}^{k}\sum_{j\neq i}E_{ij}$ and $N:=\cup_{i=1}^{k}P_{E^{\perp}}M_{i}$.
Then, $E\neq\left\{ 0\right\} $ and $M\subseteq E\oplus N$. Note
that for every $i,j\in\left\{ 1,\ldots,k\right\} $, if $i\neq j$
then 
\[
P_{E^{\perp}}\left(M_{i}\right)+P_{E^{\perp}}\left(M_{j}\right)=E^{\perp}\cdot
\]
Since $E\neq\left\{ 0\right\} $, it follows that $\dim E^{\perp}\leq n$.
Therefore, by the induction assumption, $N$ is a tractable union
of linear subspaces of \textbf{$E^{\perp}$}. Now, let $A:\mathbb{C}^{n+1}\longrightarrow\mathbb{C}^{n+1}$
be a linear transformation, which is isometric on $M$. By Theorem
\ref{Theorem 4.10.}, it follows that for every $i,j\in\left\{ 1,\ldots,k\right\} $,
if $i\neq j$ then $E_{ij}\subseteq E_{1}\left(A\right)$. Therefore,
$E\subseteq E_{1}\left(A\right)$. By Proposition \ref{Proposition 4.9},
it follows that $A$ is isometric on $E\oplus N$. Therefore, $M$
is a tractable union of subspaces of $\mathbb{C}^{n+1}$.
\end{proof}
\begin{example}
\label{Example 4.4}Let $k,d\in\mathbb{N}$ s.t. $d,k\geq2$ and let
$M=\cup_{i=1}^{k}M_{i}$ be a union of linear subspaces of $\mathbb{C}^{d}$
s.t. $\sum_{i=1}^{k}M_{i}=\mathbb{C}^{d}$. If $\dim M_{1}=d-1$ then
$M$ is a tractable union of linear subspaces of \textbf{$\mathbb{C}^{d}$.}
\end{example}

\begin{proof}
We will prove the assertion by induction on $d\geq2$: If $d=2$ then
for every $i\in\left\{ 1,\ldots,k\right\} $, $\dim M_{i}=1$. Therefore,
for every $i,j\in\left\{ 1,\ldots,k\right\} $, $i\neq j\Longrightarrow M_{i}\cap M_{j}=\left\{ 0\right\} $.
That is, $M$ is a tractable union of linear subspaces of $\mathbb{C}^{d}$.
Now, assume that the assertion holds for all $d=2,\ldots,n$ for some
$n\in\mathbb{N}$ s.t. $n\geq2$. We will show that the assertion
holds for $d=n+1$: Let $k\geq2$ and let $M=\cup_{i=1}^{k}M_{i}$
be a union of subspaces of $\mathbb{C}^{n+1}$ s.t. $\sum_{i=1}^{k}M_{i}=\mathbb{C}^{n+1}$
and $\dim M_{1}=n$. If for every $j\in\left\{ 2,\ldots,k\right\} $,
$M_{1}\cap M_{j}=\left\{ 0\right\} $ then for every $j\in\left\{ 2,\ldots,k\right\} $,
$\dim M_{j}=1$. In this case, for every $i,j\in\left\{ 1,\ldots,k\right\} $,
if $i\neq j$ then $M_{i}\cap M_{j}=\left\{ 0\right\} $. Hence, $M$
is tractable union of subspaces of $\mathbb{C}^{n+1}$. Else, if there
is $j\in\left\{ 2,\ldots,k\right\} $ s.t. $M_{1}\cap M_{j}\neq\left\{ 0\right\} $
then for every $i,j\in\left\{ 1,\ldots,k\right\} $, set $E_{ij}=M_{i}\cap M_{j}$.
Also, set $E:=\sum_{j=2}^{k}E_{1j}$. Then, $\left\{ 0\right\} \neq E\subseteq M_{1}$.
Therefore, 
\[
\dim\sum_{i=1}^{k}P_{E^{\perp}}M_{i}=\dim E^{\perp}\leq n
\]
 and 
\[
\dim P_{E^{\perp}}M_{1}=n-\dim E=\left(\left(n+1\right)-\dim E\right)-1=\dim E^{\perp}-1\cdot
\]
Set $N:=\cup_{i=1}^{k}P_{E^{\perp}}M_{i}$ . Then, by the induction
assumption, $N$ is tractable union of linear subspaces of \textbf{$E^{\perp}$}.
Now, let $A:\mathbb{C}^{d}\longrightarrow\mathbb{C}^{d}$ be an invertible
linear  transformation, which is isometric on $M$. Note that if $j\in\left\{ 2,\ldots,k\right\} $
then $M_{1}+M_{j}=\mathbb{C}^{d}$. By Theorem \ref{Theorem 4.10.},
it follows that for every $j\in\left\{ 2,\ldots,k\right\} $, $E_{1j}\subseteq E_{1}\left(A\right)$.
Therefore, $E\subseteq E_{1}\left(A\right)$. Since $A$ is isometric
on $M$, it follows, by Theorem \ref{Proposition 4.9}, that $A$
is isometric on $E\oplus N$. It is clear that $M\subseteq E\oplus N$.\textbf{
}Therefore, $M$ is tractable union of subspaces of $\mathbb{C}^{n+1}$.
\end{proof}
\begin{cor}
Let $k,d\in\mathbb{N}$ s.t $d\leq3$ and let $M=\cup_{i=1}^{k}M_{i}$
be a union of linear subspaces of $\mathbb{C}^{d}$ s.t. $\sum_{i=1}^{k}M_{i}=\mathbb{C}^{d}$.
Then, $M$ is a tractable union of linear subspaces of \textbf{$\mathbb{C}^{d}$.}
\end{cor}

\begin{proof}
If $d=1$ or $k=1$ then the assertion is clear. If $d\in\left\{ 2,3\right\} $
and $k\geq2$ then one of the following hold:
\end{proof}
\begin{enumerate}
\item For every $i,j\in\left\{ 1,\ldots,k\right\} $, $i\neq j\Longrightarrow M_{i}\cap M_{j}=\left\{ 0\right\} $.
\item $d=3$ and there is $i\in\left\{ 1,\ldots,k\right\} $ s.t. $\dim M_{i}=2$.
In this case, the proof follows from Example \ref{Example 4.4}.
\end{enumerate}
\begin{rem}
For $d\geq4$, one can construct an example of a union of linear subspaces
of $\mathbb{C}^{d}$ to show that the class of tractable unions of
linear subspaces is a proper subclass of all finite unions of linear
subspaces of $\mathbb{C}^{d}$. However,\textbf{ }below we introduce
an additional example of a tractable union of subspaces of $\mathbb{C}^{d}$
for $d\geq4$ (see Theorem \ref{Theorem 4.10} below).
\end{rem}

\begin{lem}
\label{Lemma 3.8} Let $d\in\mathbb{N}$ s.t. $d\geq4$. Let $M=\cup_{i=1}^{k}M_{i}$
be a union of subspaces of $\mathbb{C}^{d}$ with $\sum_{i=1}^{k}M_{i}=\mathbb{C}^{d}$
s.t. for every $i\in\left\{ 1,\ldots,k\right\} $, $\dim M_{i}=2$
and $M_{i}\nsubseteq\sum_{j\neq i}M_{j}$. Suppose that $\cap_{i=1}^{k}M_{i}=\left\{ 0\right\} $.
If for some $s\in\left\{ 1,\ldots,k-1\right\} $, $\cap_{i=1}^{s}M_{i}\neq\left\{ 0\right\} $
and $\cap_{i=s+1}^{k}M_{i}\neq\left\{ 0\right\} $ then
\[
\left(\sum_{i=1}^{s}M_{i}\right)\cap\left(\sum_{i=s+1}^{k}M_{i}\right)=\left\{ 0\right\} \cdot
\]
\end{lem}

\begin{proof}
Since $\cap_{i=1}^{s}M_{i}\neq\left\{ 0\right\} $ and $\cap_{i=s+1}^{k}M_{i}\neq\left\{ 0\right\} $,
it follows that 
\[
\dim\sum_{i=1}^{s}M_{i}=s+1
\]
 and 
\[
\dim\sum_{i=s+1}^{k}M_{i}=k-s+1\cdot
\]
Therefore, if $\left(\sum_{i=1}^{s}M_{i}\right)\cap\left(\sum_{i=s+1}^{k}M_{i}\right)\neq\left\{ 0\right\} $
then, 
\[
d=\dim\sum_{i=1}^{k}M_{i}\leq\dim\left(\sum_{i=1}^{s}M_{i}\right)+\dim\left(\sum_{i=s+1}^{k}M_{i}\right)-1=\left(s+1\right)+\left(k-s+1\right)-1=k+1\cdot
\]
 That is, $k\geq d-1$. Since $\cap_{i=1}^{k}M_{i}=\left\{ 0\right\} $,
it follows, by Corollary \ref{Corollary 3.16}, that there are $i,j\in\left\{ 1,\ldots,k\right\} $
s.t. $i\neq j$ and $M_{i}\cap M_{j}\neq\left\{ 0\right\} $. This
implies that $k\leq d-2$. This contradicts that $k\geq d-1$. Therefore,
\[
\left(\sum_{i=1}^{s}M_{i}\right)\cap\left(\sum_{i=s+1}^{k}M_{i}\right)=\left\{ 0\right\} \cdot
\]
\end{proof}
\begin{thm}
\label{Theorem 4.10}Let $d\in\mathbb{N}$ s.t. $d\geq2$ and let
$M=\cup_{i=1}^{k}M_{i}$ be a union of subspaces of $\mathbb{C}^{d}$
with $\sum_{i=1}^{k}M_{i}=\mathbb{C}^{d}$ s.t. for every $i\in\left\{ 1,\ldots,k\right\} $,
$\dim M_{i}=2$ and $M_{i}\nsubseteq\sum_{j\neq i}M_{j}$. Then, $M$
is a tractable union of linear subspaces of $\mathbb{C}^{d}$.
\end{thm}

\begin{proof}
In one hand, if $\cap_{i=1}^{k}M_{i}\neq\left\{ 0\right\} $ then
set $E:=\cap_{i=1}^{k}M_{i}$ and note that, in this case, for every
$i,j\in\left\{ 1,\ldots,k\right\} $, if $i\neq j$ then $M_{i}\cap M_{j}=E$.
On the other hand, if $\cap_{i=1}^{k}M_{i}=\left\{ 0\right\} $ then
the assertion follows from Lemma \ref{Lemma 3.8} and Theroem \ref{Theorem 4.9}
below. 
\end{proof}
\begin{thm}
\label{Theorem 4.9}Let $d,k\in\mathbb{N}$ s.t. $d,k\geq2$. Let
$M=\cup_{i=1}^{k}M_{i}$ be a union of subspaces of $\mathbb{C}^{d}$
s.t. $\sum_{i=1}^{k}M_{i}=\mathbb{C}^{d}$. Assume that for every
$i\in\left\{ 1,\ldots,k\right\} $, $M_{i}\nsubseteq\sum_{j\neq i}M_{j}$
and that $\cap_{r=1}^{k}M_{i}=\left\{ 0\right\} $. Then, $M=\cup_{i=1}^{m}N_{i}$,
for some $m\in\mathbb{N}$, s.t. for each $i\in\left\{ 1,\ldots,m\right\} $,
\end{thm}

\begin{enumerate}
\item $N_{i}=\cup_{r=1}^{k_{i}}M_{i_{r}}$ is a sub-union of $M$
\item $\cap_{r=1}^{k_{i}}M_{i_{r}}\neq\left\{ 0\right\} \cdot$
\item $\dim\left(\textrm{span}N_{i}\right)\leq d-1$. 
\item For every $j\in\left\{ 1,\ldots,m\right\} $, if $j\neq i$ then $\left(\cap_{r=1}^{k_{i}}M_{i_{r}}\right)\cap\left(\cap_{r=1}^{k_{j}}M_{j_{r}}\right)=\left\{ 0\right\} \cdot$
\end{enumerate}
\begin{proof}
We will prove the assertion by induction on $d\geq2$: If $d=2$ and
$k\geq2$ then for every $i\in\left\{ 1,\ldots,k\right\} $, $\dim M_{i}=1$.
Therefore, in this case, for every $i,j\in\left\{ 1,\ldots,k\right\} $,
if $i\neq j$ then $M_{i}\cap M_{j}=\left\{ 0\right\} \cdot$ Now,
assume that the assertion holds for all $d=2,\ldots,n$, for some
$n\geq2$. We will prove that the assertion holds for $d=n+1$: Let
$M=\cup_{i=1}^{k}M_{i}$ be a union of subspaces of $\mathbb{C}^{n+1}$
s.t. $\sum_{i=1}^{k}M_{i}=\mathbb{C}^{n+1}$. Assume that for every
$i\in\left\{ 1,\ldots,k\right\} $, $M_{i}\nsubseteq\sum_{j\neq i}M_{j}$
and that $\cap_{i=1}^{k}M_{i}=\left\{ 0\right\} $. For convenience,
assume that $\cap_{i=1}^{s}M_{i}\neq\left\{ 0\right\} $, where 
\[
s:=\max\left\{ t\in\mathbb{N}:\left\{ i_{1},\ldots,i_{t}\right\} \subseteq\left\{ 1,\ldots,k\right\} \text{s.t. }i_{1}<i_{2}<\ldots<i_{t}\text{ and }\cap_{l=1}^{t}M_{i_{l}}\neq\left\{ 0\right\} \right\} \cdot
\]
 Set $E_{1}:=$ $\cap_{i=1}^{s}M_{i}$ and $N_{1}:=\cup_{i=1}^{s}M_{i}$.
Since $E_{1}\neq\left\{ 0\right\} $, it follows that $\dim\left(\textrm{span}N_{1}\right)\leq d$.
Note that $\dim\sum_{i=s+1}^{k}M_{i}\leq n$. Therefore, if $\cap_{i=s+1}^{k}M_{i}\neq\left\{ 0\right\} $
then the proof is finished. Else, if $\cap_{i=s+1}^{k}M_{i}=\left\{ 0\right\} $
then, by the induction assumption, $\cup_{i=s+1}^{k}M_{i}=\cup_{i=2}^{m}N_{i}$,
for some $m\in\mathbb{N}$, s.t. for each $i\in\left\{ 2,\ldots,m\right\} $,
\begin{enumerate}
\item $N_{i}=\cup_{r=1}^{k_{i}}M_{i_{r}}$ is a sub-union of $\cup_{i=s+1}^{k+1}M_{i}$.
\item $\cap_{r=1}^{k_{i}}M_{i_{r}}\neq\left\{ 0\right\} \cdot$
\item $\dim\left(\textrm{span}N_{i}\right)\leq n$. 
\item For every $j\in\left\{ 2,\ldots,m\right\} $, if $j\neq i$ then $\left(\cap_{r=1}^{k_{i}}M_{i_{r}}\right)\cap\left(\cap_{r=1}^{k_{j}}M_{j_{r}}\right)=\left\{ 0\right\} \cdot$
\end{enumerate}
Therefore, combining the above and the definition of $s$ yields that
the assertion holds for $d=n+1$.
\end{proof}
\newpage{}

\section{Stability of RKHSs on Tractable Homogeneous Varieties Under Linear
Deformations \label{sec:Linear-deformations-of}}

Let $V$ and $W$ be two biholomorphic homogeneous varieties in $B_{d}$.
Recall that, in this case, there exists an invertible linear map 
\[
A:\mathbb{C^{\mathit{d}}}\longrightarrow\mathbb{C^{\mathit{d}}}
\]
 that maps $V$ isometrically onto $W$. In this case, $A$ induces
a RKHS isomorphism $T_{A}:H_{V}\longrightarrow H_{W}$ such that 
\[
T_{A}\left(k_{x}\right)=k_{Ax}\quad\text{for all \ensuremath{x\in V}}\cdot
\]
By Theorem \ref{Theorem 36}, it follows that for any element $f\in H_{V}$,
there exists a unique $\oplus_{n=0}^{\infty}\xi_{n}\in\oplus_{n=0}^{\infty}V^{n}$
such that 
\[
f=\sum_{n=0}^{\infty}\left\langle \cdot,\xi_{n}\right\rangle \cdot
\]
By the definition of $T_{A}$, it follows that for every $x\in V$,
\[
T_{A}\left(k_{x}\right)=k_{Ax}=k_{x}\circ A^{*}\cdot
\]
 Since $H_{V}=\overline{\text{span}\left\{ k_{x}:x\in V\right\} }$
and $T_{A}$ is bounded on $H_{V}$, it follows that for every $f=\sum_{n=0}^{\infty}\left\langle \cdot,\xi_{n}\right\rangle \in H_{V}$,
\[
T_{A}\left(f\right)=f\circ A^{*}\cdot
\]
In fact, for every $y\in V$, we have
\[
T_{A}\left(f\right)\left(y\right)=f\left(A^{*}y\right)=\sum_{n=0}^{\infty}\left\langle \left(A^{*}y\right)^{n},\xi_{n}\right\rangle =\sum_{n=0}^{\infty}\left\langle y^{n},A^{\otimes n}\xi_{n}\right\rangle \cdot
\]
 Therefore, 
\begin{equation}
T_{A}\left(f\right)=T_{A}\left(\sum_{n=0}^{\infty}\left\langle \cdot,\xi_{n}\right\rangle \right)=\sum_{n=0}^{\infty}\left\langle \cdot,A^{\otimes n}\xi_{n}\right\rangle \cdot\label{eq:The induced RKHS isomorphism}
\end{equation}
Note that from \ref{eq:The induced RKHS isomorphism} we deduce, in
particular, that for every $n\in\mathbb{N}$ and $\xi_{n}\in V^{n}$,
\[
T_{A}\left(\left\langle \cdot,\xi_{n}\right\rangle \right)=\left\langle \cdot,A^{\otimes n}\xi_{n}\right\rangle \cdot
\]
 It follows that 
\[
\left\Vert A^{\otimes n}\mid_{V^{n}}\right\Vert \leq\left\Vert T_{A}\right\Vert \cdot
\]
In particular, we obtain the following theorem:
\begin{thm}
Let $V$ be a homogeneous variety in $B_{d}$. Assume that $A:\mathbb{C^{\mathit{d}}}\longrightarrow\mathbb{C^{\mathit{d}}}$
is an invertible linear map which acts isometrically on $V$ and set
$W:=AV$. Let 
\[
T_{A}:H_{V}\longrightarrow H_{W}
\]
be the RKHS isomorphism that is detemined by
\[
T_{A}\left(k_{x}\right)=k_{Ax}\quad\text{for all \ensuremath{x\in V}}\cdot
\]
 Then, for every $n\in\mathbb{N}$, $\left\Vert A^{\otimes n}\mid_{V^{n}}\right\Vert \leq\left\Vert T_{A}\right\Vert \cdot$
In particular,
\[
\left\Vert A\mid_{\text{span}V}\right\Vert _{OP}\leq\left\Vert T_{A}\right\Vert \cdot
\]
\end{thm}

In this section we mainly show that, if $V$ is a tractable homogeneous
variety (See Definition \ref{Definition 1.5}) and $A$ is almost
a unitary, then $T_{A}$ is almost a unitary in the following sense
(see Theorem \ref{Theorem 5.14} below):

If $\left(A_{n}:\mathbb{C^{\mathit{d}}}\longrightarrow\mathbb{C^{\mathit{d}}}\right)_{n\in\mathbb{N}}$
is a sequence of invertible linear transformations which are isometric
on $V$, then 
\[
\mathit{\parallel\mathit{A_{n}}\parallel_{OP},\parallel A_{n}^{-1}\parallel_{OP}\overset{n\rightarrow\infty}{\longrightarrow}1}
\]
 implies 
\[
\left\Vert T_{A_{n}}\right\Vert \left\Vert T_{A_{n}}^{-1}\right\Vert \stackrel{n\rightarrow\infty}{\longrightarrow}1\cdot
\]
 Where for every $n\in\mathbb{N}$, $T_{A_{n}}:H_{V}\longrightarrow H_{A_{n}V}$
is the RKHS isomorphism induced by $A_{n}$. This establishes the
implication $\left(1\right)\Longrightarrow\left(2\right)$ in Theorem
\ref{Theorem 1.39}.
\begin{prop}
Let $V$ and $W$ be two homogeneuous varieties in $B_{d}$ s.t. $V\perp W$,
i.e. $\left\langle x,y\right\rangle =0$ for all $x\in V$and $y\in W$.
Let $A:\mathbb{C}^{d}\longrightarrow\mathbb{C}^{d}$ be an invertible
linear transformation, which is isometric on $V$and $W$ s.t. $AV\perp AW$.
Then, 
\[
\left\Vert \left(T_{A}\right)\mid_{H_{\left(V\cup W\right)}}\right\Vert =\max\left\{ \left\Vert \left(T_{A}\right)\mid_{H_{V}}\right\Vert ,\left\Vert \left(T_{A}\right)\mid_{H_{W}}\right\Vert \right\} \cdot
\]
\end{prop}

\begin{proof}
Set $H_{1}:=H_{V}$ , $H_{2}:=H_{W}\ominus H_{\left\{ 0\right\} }$.
Then, $H_{1}\perp H_{2}$ and $T_{A}\left(H_{1}\right)\perp T_{A}\left(H_{2}\right)$.
By Proposition \ref{Proposition 310}, It follows that 
\[
H_{V\cup W}=H_{V}+H_{W}=H_{1}\oplus H_{2}.
\]
 Let $g_{1}\in H_{1}$ and $g_{2}\in H_{2}$ . Then, 
\begin{eqnarray*}
\left\Vert T_{A}\left(g_{1}+g_{2}\right)\right\Vert ^{2} & = & \left\Vert T_{A}\left(g_{1}\right)\right\Vert ^{2}+\left\Vert T_{A}\left(g_{2}\right)\right\Vert ^{2}\\
 & \leq & \left(\left\Vert g_{1}+g_{2}\right\Vert ^{2}\right)\max\left\{ \left\Vert \left(T_{A}\right)_{\mid H_{V}}\right\Vert ^{2},\left\Vert \left(T_{A}\right)_{\mid H_{W}}\right\Vert ^{2}\right\} \cdot
\end{eqnarray*}
\end{proof}
\begin{prop}
\label{Proposition 5.2}Let $V$ and $W$ be finite unions of linear
subspaces of $\mathbb{C}^{\mathit{d}}$. Then, 
\[
\mathit{H}_{B_{d}\cap\left(V+W\right)}=\overline{\textrm{span}\left\{ \left\langle \cdot,x\right\rangle ^{m}\left\langle \cdot,y\right\rangle ^{n}:x\in V,y\in W,m,n\in\mathbb{N}_{0}\right\} }\cdot
\]
\end{prop}

\begin{proof}
Recall that $\mathcal{\mathit{H}}_{B_{d}\cap\left(V+W\right)}=\sum_{n=0}^{\infty}\textrm{span}\left\{ \left\langle \cdot,x+y\right\rangle ^{n}:x\in V,y\in W\right\} \cdot$
Now, let $x\in V$, $y\in W$ and $n,m\in\mathbb{N_{\mathit{0}}}$.
Then, 
\[
\left\langle \cdot,x+e^{-it}y\right\rangle ^{n+m}=\sum_{k=0}^{n+m}e^{itk}\binom{n+m}{k}\left\langle \cdot,x\right\rangle ^{n+m-k}\left\langle \cdot,y\right\rangle ^{k},\text{for all }t\in\left[0,2\pi\right]\cdot
\]
 It follows that
\[
\int_{0}^{2\pi}\left\langle \cdot,x+e^{-it}y\right\rangle ^{n+m}e^{-int}dt=2\pi\binom{n+m}{n}\left\langle \cdot,x\right\rangle ^{m}\left\langle \cdot,y\right\rangle ^{n}\cdot
\]
 Therefore, 
\[
\left\langle \cdot,x\right\rangle ^{m}\left\langle \cdot,y\right\rangle ^{n}\in\textrm{span}\left\{ \left\langle \cdot,x+y\right\rangle ^{n+m}:x\in V,y\in W\right\} \cdot
\]
\end{proof}
\begin{prop}
\label{Proposition 5.3}Let $V$ and $W$ be finite unions of linear
subspaces of $\mathbb{C}^{\mathit{d}}$. Let $A:\mathbb{C}^{d}\longrightarrow\mathbb{C}^{d}$
be an invertible linear transformation, which is isometric on $V+W$.
For any $m\in\mathbb{N}_{0}$, let 
\[
H\left(m\right):=\left[H_{V\cap B_{d}}\cdot\textrm{span}\left\{ \left\langle \cdot,y\right\rangle ^{m}:y\in W\right\} \right]\cdot
\]
Then, for every $m\in\mathbb{N}_{0}$,
\end{prop}

\[
\left\Vert \left(T_{A}\right)\mid_{H\left(m\right)}\right\Vert \leq\left\Vert \left(T_{A}\right)\mid_{H_{V\cap B_{d}}}\right\Vert \left\Vert \left(T_{A}\right)\mid_{H_{W\cap B_{d}}}\right\Vert 
\]
and
\[
T_{A}\left(H\left(m\right)\right)=\left[H_{AV\cap B_{d}}\cdot\textrm{span}\left\{ \left\langle \cdot,Ay\right\rangle ^{m}:y\in W\right\} \right]\cdot
\]

\begin{proof}
Let $m\in\mathbb{N}_{0}$, $f\in H_{B_{d}\cap V}$ and $g=\left\langle \cdot,y\right\rangle ^{m}$,
where $y\in W$. Then, 
\[
J\left(fg\right)=P_{s}\left(J\left(f\right)\otimes J\left(g\right)\right),
\]
where $P_{s}$ is the orthogonal projection of  $\mathcal{F}\left(\mathbb{C}^{d}\right)$
onto $\mathcal{F}_{s}\left(\mathbb{C}^{d}\right)$ and $J:\mathcal{H}_{d}^{2}\longrightarrow F_{s}\left(\mathbb{C}^{d}\right)$
is the anti-unitary operator defined in \ref{Equation 2.1.1}. Moreover,
we have 
\[
T_{A}\left(fg\right)=J^{-1}\circ\left(\widetilde{A}\otimes A^{\otimes m}\right)\circ J\left(fg\right),
\]
where $\widetilde{A}=\oplus_{m=0}^{\infty}A^{\otimes m}$. Therefore,
\[
J\left(H\left(m\right)\right)=P_{s}\left(J\left(H_{V}\right)\otimes W^{m}\right)
\]
 and for every $h\in H\left(m\right)$,
\[
T_{A}\left(h\right)=J^{-1}\circ\left(\widetilde{A}\otimes A^{\otimes m}\right)\circ J\left(h\right)\cdot
\]
 Therefore, 
\begin{eqnarray*}
\left\Vert \left(T_{A}\right)\mid_{H\left(m\right)}\right\Vert  & = & \left\Vert \left(\widetilde{A}\otimes A^{\otimes m}\right)\mid_{J\left(H\left(m\right)\right)}\right\Vert \\
 & \leq & \left\Vert \left(\widetilde{A}\otimes A^{\otimes m}\right)\mid_{J\left(H_{V}\right)\otimes W^{m}}\right\Vert \\
 & = & \left\Vert \widetilde{A}\mid_{J\left(H_{V}\right)}\right\Vert \left\Vert \left(A^{\otimes m}\right)\mid_{W^{m}}\right\Vert \\
 & = & \left\Vert \left(T_{A}\right)\mid_{H_{V}}\right\Vert \left\Vert \left(T_{A}\right)\mid_{J^{-1}\left(W^{m}\right)}\right\Vert \\
 & \leq & \left\Vert \left(T_{A}\right)\mid_{H_{V}}\right\Vert \left\Vert \left(T_{A}\right)\mid_{H_{W}}\right\Vert \cdot
\end{eqnarray*}
 Finally, note that 
\begin{eqnarray*}
T_{A}\left(fg\right) & = & J^{-1}\circ\left(\widetilde{A}\otimes A^{\otimes m}\right)\circ P_{s}\left(J\left(f\right)\otimes J\left(g\right)\right)\\
 & = & J^{-1}\circ P_{s}\circ\left(\widetilde{A}\otimes A^{\otimes m}\right)\circ\left(J\left(f\right)\otimes J\left(g\right)\right)\\
 & = & T_{A}\left(f\right)T_{A}\left(g\right)\cdot
\end{eqnarray*}
 It follows that $T_{A}\left(H\left(m\right)\right)=\left[H_{AV\cap B_{d}}\cdot\textrm{span}\left\{ \left\langle \cdot,Ay\right\rangle ^{m}:y\in W\right\} \right]$. 
\end{proof}
\begin{cor}
\label{Corollary 5.4}Let $V$ and $W$ be finite unions of linear
subspaces of $\mathbb{C}^{\mathit{d}}$, s.t. $V\perp W$. Let $A:\mathbb{C}^{d}\longrightarrow\mathbb{C}^{d}$
be an invertible linear transformation, which is isometric on $V$and
$W$, s.t. $AV\perp AW$. Then, 
\[
\left\Vert \left(T_{A}\right)\mid_{H_{\left(V+W\right)\cap B_{d}}}\right\Vert \leq\left\Vert \left(T_{A}\right)\mid_{H_{V\cap B_{d}}}\right\Vert \left\Vert \left(T_{A}\right)_{\mid H_{W\cap B_{d}}}\right\Vert \cdot
\]
\end{cor}

\begin{proof}
Since $V\perp W$, it follows that 
\[
\left[H_{V\cap B_{d}}\cdot\textrm{span}\left\{ \left\langle \cdot,y\right\rangle ^{n}:y\in W\right\} \right]
\]
 and 
\[
\left[H_{V\cap B_{d}}\cdot\textrm{span}\left\{ \left\langle \cdot,y\right\rangle ^{m}:y\in W\right\} \right]
\]
 are orthogonal for $m\neq n$. Now, by Proposition \ref{Proposition 5.2},
it follows that 
\[
\mathcal{\mathit{H}}_{B_{d}\cap\left(V+W\right)}=\sum_{n=0}^{\infty}\left[H_{V\cap B_{d}}\cdot\textrm{span}\left\{ \left\langle \cdot,y\right\rangle ^{n}:y\in W\right\} \right]\cdot
\]
Now, the assertion follows from Proposition \ref{Proposition 5.3}. 
\end{proof}
\begin{cor}
(See \cite[Lemma 7.12]{davidson2011isomorphism}). \label{Corollary 5.6}Let
$V$ be a finite union of linear subspaces of $\mathbb{C}^{\mathit{d}}$
and let $E$ be a subspace of $\mathbb{C}^{\mathit{d}}$ s.t. $V\perp E$.
Let $A:\mathbb{C}^{d}\longrightarrow\mathbb{C}^{d}$ be an invertible
linear transformation, which is isometric on $V$and $E$ s.t. $AV\perp AE$.
Then, 
\[
\left\Vert \left(T_{A}\right)\mid_{H_{\left(V+E\right)\cap B_{d}}}\right\Vert =\left\Vert \left(T_{A}\right)\mid_{H_{V\cap B_{d}}}\right\Vert \cdot
\]
\end{cor}

\begin{proof}
This follows from Corollary \ref{Corollary 5.4}. Note that 
\[
\left\Vert \left(T_{A}\right)\mid_{H_{V\cap B_{d}}}\right\Vert \leq\left\Vert \left(T_{A}\right)\mid_{H_{\left(V+E\right)\cap B_{d}}}\right\Vert \leq\left\Vert \left(T_{A}\right)\mid_{H_{V\cap B_{d}}}\right\Vert \left\Vert \left(T_{A}\right)\mid_{E\cap B_{d}}\right\Vert 
\]
 and that $\left\Vert \left(T_{A}\right)\mid_{H_{E\cap B_{d}}}\right\Vert =1$.
\end{proof}
\begin{claim}
\label{Claim 61}Let $A:\mathbb{C^{\mathit{d}}}\longrightarrow\mathbb{C^{\mathit{d}}}$
be an invertible linear map. Then,
\end{claim}

\begin{enumerate}
\item $\left\Vert A^{*}A-I\right\Vert =\max\left\{ \left|\left\Vert A\right\Vert ^{2}-1\right|,\left|1-\frac{1}{\left\Vert A^{-1}\right\Vert ^{2}}\right|\right\} $.
\item If $\left\Vert \mathit{A}\right\Vert \geq1$ and $\left\Vert A^{-1}\right\Vert \geq1$
then, 
\begin{gather*}
\max\left\{ \left\Vert \mathit{A}\right\Vert ^{2}-1,\left\Vert A^{-1}\right\Vert ^{2}-1\right\} \leq\max\left\{ \left\Vert A^{*}A-I\right\Vert ,\left\Vert \left(A^{-1}\right)^{*}A^{-1}-I\right\Vert \right\} \leq\left\Vert A\right\Vert ^{2}\left\Vert A^{-1}\right\Vert ^{2}-1.
\end{gather*}
\end{enumerate}
\begin{proof}
Let $A:\mathbb{C^{\mathit{d}}}\longrightarrow\mathbb{C^{\mathit{d}}}$
be an invertible linear map.
\begin{enumerate}
\item Note that $\sigma_{B\left(\mathbb{C^{\mathit{d}}}\right)}\left(A^{*}A-I\right)=\sigma_{B\left(\mathbb{C^{\mathit{d}}}\right)}\left(A^{*}A\right)-1\subseteq\mathbb{R}.$
Therefore, 
\[
\left\Vert A^{*}A-I\right\Vert =\max\left\{ \left|\lambda-1\right|:\lambda\in\sigma_{B\left(\mathbb{C^{\mathit{d}}}\right)}\left(A^{*}A\right)\right\} .
\]
Since, 
\[
\max\left\{ \lambda:\lambda\in\sigma_{B\left(\mathbb{C^{\mathit{d}}}\right)}\left(A^{*}A\right)\right\} =\left\Vert A\right\Vert ^{2}
\]
and 
\[
\min\left\{ \lambda:\lambda\in\sigma_{B\left(\mathbb{C^{\mathit{d}}}\right)}\left(A^{*}A\right)\right\} =\frac{1}{\left\Vert A^{-1}\right\Vert ^{2}},
\]
 It follows that $\left\Vert A^{*}A-I\right\Vert =\max\left\{ \left|\left\Vert A\right\Vert ^{2}-1\right|,\left|1-\frac{1}{\left\Vert A^{-1}\right\Vert ^{2}}\right|\right\} $.
\item This follows from $\left(1\right)$. 
\end{enumerate}
\end{proof}
For a closed subspace $S$ of a Hilbert space $\mathcal{H}$, let
$P_{S}$ denote the orthogonal projection of $\mathcal{H}$ onto $S$.
\begin{claim}
\label{Claim 5.8}Let $\mathit{V}$ and $\mathit{W}$ be linear subspaces
of $\mathbb{C^{\mathit{d}}}$. Then,
\end{claim}

\[
V\cap W\neq\left\{ 0\right\} \Longleftrightarrow\left\Vert P_{V}P_{W}\right\Vert =1
\]

\begin{proof}
First, note that $\left\Vert P_{V}P_{W}\right\Vert \leq1$. Second,
if $V\cap W\neq\left\{ 0\right\} $, then choose $x\in V\cap W$ with
$\left\Vert x\right\Vert =1$. Then, $\left\Vert P_{V}P_{W}x\right\Vert =\left\Vert x\right\Vert =1$.
Hence, $\left\Vert P_{V}P_{W}\right\Vert =1$. Conversely, if $\left\Vert P_{V}P_{W}\right\Vert =1$
then there exists $x\in\mathbb{C^{\mathit{d}}}$ with $\left\Vert x\right\Vert =1$,
s.t. $\left\Vert P_{V}P_{W}x\right\Vert =1$. It follows that $\left\Vert P_{W}x\right\Vert =1$.
Using the fact that
\[
\left\Vert P_{W}x\right\Vert ^{2}=\left\Vert P_{V}P_{W}x\right\Vert ^{2}+\left\Vert \left(I-P_{V}\right)P_{W}x\right\Vert ^{2}
\]
 yields $\left\Vert \left(I-P_{V}\right)P_{W}x\right\Vert =0$. Therefore,
$P_{W}x\in\mathit{V}$. Again, using the fact that 
\[
\left\Vert x\right\Vert ^{2}=\left\Vert P_{W}x\right\Vert ^{2}+\left\Vert \left(I-P_{W}\right)x\right\Vert ^{2}
\]
 yields $\left\Vert \left(I-P_{W}\right)x\right\Vert =0$. Therefore,
$P_{W}x=x\in\mathit{W}\cap V$. Hence, $V\cap W\neq\left\{ 0\right\} $.
\end{proof}
\begin{lem}
\label{Lemma 5.10}Let $\mathit{V}$ and $\mathit{W}$ be linear subspaces
of $\mathbb{C^{\mathit{d}}}$. Let $A:\mathbb{C^{\mathit{d}}}\longrightarrow\mathbb{C^{\mathit{d}}}$
be an invertible linear map. Then,
\end{lem}

\begin{enumerate}
\item $\left\Vert \mathit{P_{AV}P_{AW}}\right\Vert \leq\left\Vert A^{-1}\right\Vert ^{2}\left(\left\Vert A^{*}A-I\right\Vert +\left\Vert P_{V}P_{W}\right\Vert \right)$.
\item $\left\Vert \mathit{P_{V}P_{W}}\right\Vert \leq\left\Vert \mathit{A}\right\Vert ^{2}\left(\left\Vert \left(A^{-1}\right)^{*}A^{-1}-\mathit{I}\right\Vert +\left\Vert P_{AV}P_{AW}\right\Vert \right)$.
\item If $\mathit{A}$ is a unitary then $\left\Vert \mathit{P_{AV}P_{AW}}\right\Vert =\left\Vert P_{V}P_{W}\right\Vert $.
\end{enumerate}
\begin{proof}
let $A:\mathbb{C^{\mathit{d}}}\longrightarrow\mathbb{C^{\mathit{d}}}$
be an invertible linear map.
\begin{enumerate}
\item Note that 
\[
P_{AV}=AP_{V}A^{-1}P_{AV}
\]
 and 
\[
P_{AW}=P_{AW}\left(A^{-1}\right)^{*}P_{W}A^{*}\cdot
\]
It follows that 
\begin{eqnarray*}
\left\Vert P_{AV}P_{AW}\right\Vert =\left\Vert P_{AW}P_{AV}\right\Vert  & = & \left\Vert P_{AW}\left(A^{-1}\right)^{*}P_{W}A^{*}AP_{V}A^{-1}P_{AV}\right\Vert \\
 & \leq & \left\Vert A^{-1}\right\Vert ^{2}\left\Vert P_{W}A^{*}AP_{V}\right\Vert \\
 & = & \left\Vert A^{-1}\right\Vert ^{2}\left\Vert P_{V}A^{*}AP_{W}\right\Vert \\
 & \leq & \left\Vert A^{-1}\right\Vert ^{2}\left(\left\Vert A^{*}A-I\right\Vert +\left\Vert P_{V}P_{W}\right\Vert \right).
\end{eqnarray*}
 
\item Note that $V=A^{-1}\left(AV\right)$ and $W=A^{-1}\left(AW\right)$.
Therefore, by $\left(3\right)$ we have 
\[
\left\Vert \mathit{P_{V}P_{W}}\right\Vert \leq\left\Vert \mathit{A}\right\Vert ^{2}\left(\left\Vert \left(A^{-1}\right)^{*}A^{-1}-I\right\Vert +\left\Vert P_{AV}P_{AW}\right\Vert \right).
\]
 
\item Suppose that $\mathit{A}$ is a unitary. Then, $\left(3\right)$ yields
$\left\Vert \mathit{P_{AV}P_{AW}}\right\Vert \leq\left\Vert P_{V}P_{W}\right\Vert $,
and $\left(4\right)$ yields 
\[
\left\Vert \mathit{P_{V}P_{W}}\right\Vert \leq\left\Vert P_{AV}P_{AW}\right\Vert \cdot
\]
\end{enumerate}
\end{proof}
\begin{thm}
\label{Theorem 5.10}Let $k\in\mathbb{N}$ s.t. $k\geq2$. Suppose
that for all $i\in\left\{ 1,\ldots,k\right\} $, $M_{i}$ is a subspace
of $\mathbb{C^{\mathit{d}}}$. Let $\left(A_{n}\right)_{n\in\mathbb{N}_{0}}$
be a sequence of invertible linear transformations, $A_{n}:\mathbb{C^{\mathit{d}}}\longrightarrow\mathbb{C^{\mathit{d}}}$,
s.t. $A_{0}={\bf Id}$ and $\mathit{\parallel\mathit{A_{n}}\parallel,\parallel A_{n}^{-1}\parallel\overset{n\rightarrow\infty}{\longrightarrow}1}$.
For every $n\in\mathbb{N}_{0}$, set 
\[
c_{n}:=\max\left\{ \left\Vert P_{A_{n}M_{i}}P_{A_{n}M_{j}}\right\Vert :i,j\in\left\{ 1,\ldots,k\right\} ,i\neq j\right\} .
\]
 Then, 
\[
c_{n}\overset{n\rightarrow\infty}{\longrightarrow}c_{0}\cdot
\]
\begin{proof}
Note that for every $n\in\mathbb{N}_{0}$, $0\leq c_{n}\leq1$. Let
$n\in\mathbb{N}$. Then, by Lemma \ref{Lemma 5.10},
\[
c_{n}\leq\left\Vert \mathit{A_{n}^{-1}}\right\Vert ^{2}\left(\left\Vert A_{n}^{*}A_{n}-I\right\Vert +c_{0}\right)
\]
 and 
\[
c_{0}\leq\left\Vert \mathit{A}_{n}\right\Vert ^{2}\left(\left\Vert \left(A_{n}^{-1}\right)^{*}A_{n}^{-1}-\mathit{I}\right\Vert +c_{n}\right).
\]
Since, by Claim \ref{Claim 61}, also $\left\Vert A_{n}^{*}A_{n}-I\right\Vert \underset{n\rightarrow\infty}{\longrightarrow}0$
and $\left\Vert \left(A_{n}^{-1}\right)^{*}A_{n}^{-1}-I\right\Vert \underset{n\rightarrow\infty}{\longrightarrow}0$,
it follows that 
\[
c_{0}\leq\liminf_{n}c_{n}\leq\limsup_{n}c_{n}\leq c_{0}\cdot
\]
\end{proof}
\end{thm}

\begin{lem}
\label{Lemma 5.11}(See \cite[Lemma 7.10]{davidson2011isomorphism}).
Let $k\in\mathbb{N}$ s.t. $k\geq2$ and let $M=\cup_{i=1}^{k}M_{i}$
be a union of subspaces of $\mathbb{C}^{d}$ s.t. $\sum_{i=1}^{k}M_{i}=\mathbb{C}^{d}$.
Assume that for every $i,j\in\left\{ 1,\ldots,k\right\} $, 
\[
i\neq j\Longrightarrow M_{i}\cap M_{j}=\left\{ 0\right\} \cdot
\]
 Set 
\[
c:=\max\left\{ \left\Vert P_{M_{i}}P_{M_{j}}\right\Vert :i,j\in\left\{ 1,\ldots,k\right\} ,i\neq j\right\} .
\]
Let $n\in\mathbb{N}$ and $\mathit{x=\text{\ensuremath{\sum_{i=1}^{k}x_{i}\in M^{n}=\sum_{i=1}^{k}M_{i}^{n}}}}$,
where for every $i\in\left\{ 1,\ldots,k\right\} $, $x_{i}\in M_{i}^{n}$.
Then, 
\[
\left(1-c^{n}k\right)\sum_{i=1}^{k}\left\Vert x_{i}\right\Vert ^{2}\leq\left\Vert x\right\Vert ^{2}\leq\left(1+c^{n}k\right)\sum_{i=1}^{k}\left\Vert x_{i}\right\Vert ^{2}\cdot
\]
 In particular, if $c^{n}k<1$ then 
\[
\sum_{i=1}^{k}\left\Vert x_{i}\right\Vert ^{2}\leq\frac{\left\Vert x\right\Vert ^{2}}{1-kc^{n}}\cdot
\]
\end{lem}

\begin{proof}
First, note that, by Claim \ref{Claim 5.8}, $c<1$. Now, let $n\in\mathbb{N}$
and $\mathit{x=\text{\ensuremath{\sum_{i=1}^{k}x_{i}\in M^{n}=\sum_{i=1}^{k}M_{i}^{n}}}}$,
where for every $i\in\left\{ 1,\ldots,k\right\} $, $x_{i}\in M_{i}^{n}$.
Then, 
\begin{eqnarray*}
\left|\left\Vert x\right\Vert ^{2}-\sum_{i=1}^{k}\left\Vert x_{i}\right\Vert ^{2}\right| & = & \left|\sum_{i\neq j}\left\langle x_{i},x_{j}\right\rangle \right|\leq\sum_{i\neq j}\left|\left\langle x_{i},x_{j}\right\rangle \right|\\
 & = & \sum_{i\neq j}\left|\left\langle P_{M_{j}^{n}}P_{M_{i}^{n}}x_{i},x_{j}\right\rangle \right|\\
 & \leq & \sum_{i\neq j}^{k}\left\Vert P_{M_{j}^{n}}P_{M_{i}^{n}}\right\Vert \left\Vert x_{i}\right\Vert \left\Vert x_{j}\right\Vert \\
 & \leq & \sum_{i\neq j}\left\Vert P_{M_{j}}^{\otimes n}P_{M_{i}}^{\otimes n}\right\Vert \left\Vert x_{i}\right\Vert \left\Vert x_{j}\right\Vert \\
 & = & \sum_{i\neq j}\left\Vert P_{M_{i}}P_{M_{j}}\right\Vert ^{n}\left\Vert x_{i}\right\Vert \left\Vert x_{j}\right\Vert \\
 & \leq & c^{n}\left(\sum_{i=1}^{k}\left\Vert x_{i}\right\Vert \right)^{2}\\
 & \leq & kc^{n}\sum_{i=1}^{k}\left\Vert x_{i}\right\Vert ^{2}\cdot
\end{eqnarray*}
Therefore, 
\[
\left(1-c^{n}k\right)\sum_{i=1}^{k}\left\Vert x_{i}\right\Vert ^{2}\leq\left\Vert x\right\Vert ^{2}\leq\left(1+c^{n}k\right)\sum_{i=1}^{k}\left\Vert x_{i}\right\Vert ^{2}\cdot
\]
Now, assume that $c^{n}k<1$. Then, it follows that $\sum_{i=1}^{k}\left\Vert x_{i}\right\Vert ^{2}\leq\frac{\left\Vert x\right\Vert ^{2}}{1-kc^{n}}\cdot$
\end{proof}
\begin{prop}
\label{Proposition 5.12}Let $d,k\in\mathbb{N}$. Let $N=\cup_{i=1}^{k}\cup_{r=1}^{k_{i}}N_{i,r}$
be a finite union of finite unions of subspaces of $\mathbb{C}^{d}$
s.t $\textrm{span}\left(N\right)=\mathbb{C}^{d}$. For every $i\in\left\{ 1,\ldots,k\right\} $,
set $N_{i}:=\cup_{r=1}^{k_{i}}N_{i,r}$ and suppose that for every
$i,j\in\left\{ 1,\ldots,k\right\} $, 
\[
i\neq j\Longrightarrow\textrm{span}N_{i}\cap\textrm{span}N_{j}=\left\{ 0\right\} \cdot
\]
 Assume that $\left(A_{m}:\mathbb{C}^{d}\longrightarrow\mathbb{C}^{d}\right)_{m\in\mathbb{N}}$
is a sequence of invertible linear transformations, which are isometric
on $N$. Set $V:=N\cap B_{d}$ and for every $m\in\mathbb{N}$, $W_{m}:=B_{d}\cap A_{m}N$.
Further, for every $m\in\mathbb{N}$, let $T_{A_{m}}:H_{V}\rightarrow H_{W_{m}}$
be the RKHS isomorphism which induced by $A_{m}$. Then, there exist
$s\in\mathbb{R}$, $0\leq s<1$, and $N\left(s,k\right)\in\mathbb{N}$
s.t. for every $m\in\mathbb{N}$ and $N\geq N\left(s,k\right)$,
\begin{enumerate}
\item $\left\Vert T_{A_{m}}\right\Vert \leq max\left\{ \left\Vert A_{m}\right\Vert ^{N},\sqrt{\frac{1+s^{N}k}{1-s^{N}k}}\underset{1\leq i\leq k}{\max}\left\Vert T_{A_{m}}\mid_{H_{N_{i}\cap B_{d}}}\right\Vert \right\} \cdot$
\item $\left\Vert T_{A_{m}}^{-1}\right\Vert \leq max\left\{ \left\Vert A_{m}^{-1}\right\Vert ^{N},\sqrt{\frac{1+s^{N}k}{1-s^{N}k}}\underset{1\leq i\leq k}{\max}\left\Vert T_{A_{m}}^{-1}\mid_{H_{A_{m}N_{i}\cap B_{d}}}\right\Vert \right\} \cdot$
\end{enumerate}
\end{prop}

\begin{proof}
We will prove assertion $\left(1\right)$. The proof for $\left(2\right)$
is similar. Set 
\[
c_{0}:=\max\left\{ \left\Vert P_{\textrm{span}N_{i}}P_{\textrm{span}N_{j}}\right\Vert :i,j\in\left\{ 1,\ldots,k\right\} ,i\neq j\right\} 
\]
 and for every $m\in\mathbb{N}$, set
\[
c_{m}:=\max\left\{ \left\Vert P_{A\left(\textrm{span}N_{i}\right)}P_{A\left(\textrm{span}N_{j}\right)}\right\Vert :i,j\in\left\{ 1,\ldots,k\right\} ,i\neq j\right\} \cdot
\]
 Let 
\[
s:=\max\left\{ c_{m}:m\in\mathbb{N\cup\mathrm{\left\{ 0\right\} }}\right\} 
\]
 and 
\[
N\left(s,k\right):=\min\left\{ n\in\mathbb{N}:s^{n}k<1\right\} 
\]
(note that by by Claim \ref{Claim 5.8} and Theorem \ref{Theorem 5.10},
it follows that $s<1$). Let $m\in\mathbb{N}$ and $n\in\mathbb{N}$
s.t. $n\geq N\left(s,k\right)$. Then, for every $\mathit{x=\text{\ensuremath{\sum_{i=1}^{k}x_{i}\in N^{n}=\sum_{i=1}^{k}N_{i}^{n}}}}$,
where for every $i\in\left\{ 1,\ldots,k\right\} $, $x_{i}\in N_{i}^{n}$,
\begin{eqnarray*}
\left|\left\Vert A_{m}^{\otimes n}x\right\Vert ^{2}-\sum_{i=1}^{k}\left\Vert A_{m}^{\otimes n}x_{i}\right\Vert ^{2}\right| & = & \left|\sum_{i\neq j}\left\langle A_{m}^{\otimes n}x_{i},A_{m}^{\otimes n}x_{j}\right\rangle \right|\\
 & \leq & \sum_{i\neq j}\left|\left\langle A_{m}^{\otimes n}x_{i},A_{m}^{\otimes n}x_{j}\right\rangle \right|\\
 & = & \sum_{i\neq j}\left|\left\langle P_{A_{m}^{\otimes n}N_{j}^{n}}P_{A_{m}^{\otimes n}N_{i}^{n}}A_{m}^{\otimes n}x_{i},A_{m}^{\otimes n}x_{j}\right\rangle \right|\\
 & \leq & \sum_{i\neq j}^{k}\left\Vert P_{A_{m}^{\otimes n}N_{j}^{n}}P_{A_{m}^{\otimes n}N_{i}^{n}}\right\Vert \left\Vert A_{m}^{\otimes n}x_{i}\right\Vert \left\Vert A_{m}^{\otimes n}x_{j}\right\Vert \\
 & \leq & \sum_{i\neq j}\left\Vert P_{A_{m}N_{j}}^{\otimes n}P_{A_{m}N_{i}}^{\otimes n}\right\Vert \left\Vert A_{m}^{\otimes n}x_{i}\right\Vert \left\Vert A_{m}^{\otimes n}x_{j}\right\Vert \\
 & \leq & \sum_{i\neq j}\left\Vert P_{A_{m}N_{i}}P_{A_{m}N_{j}}\right\Vert ^{n}\left\Vert A_{m}^{\otimes n}x_{i}\right\Vert \left\Vert A_{m}^{\otimes n}x_{j}\right\Vert \\
 & \leq & \sum_{i\neq j}\left\Vert P_{A\left(\textrm{span}N_{i}\right)}P_{A\left(\textrm{span}N_{j}\right)}\right\Vert ^{n}\left\Vert A_{m}^{\otimes n}x_{i}\right\Vert \left\Vert A_{m}^{\otimes n}x_{j}\right\Vert \\
 & \leq & \left(\underset{i\neq j}{\max}\left\Vert P_{A_{m}\left(\textrm{span}N_{i}\right)}P_{A_{m}\left(\textrm{span}N_{j}\right)}\right\Vert \right)^{n}\left(\sum_{i=1}^{k}\left\Vert A_{m}^{\otimes n}x_{i}\right\Vert \right)^{2}\\
 & \leq & kc_{m}^{n}\sum_{i=1}^{k}\left\Vert A_{m}^{\otimes n}x_{i}\right\Vert ^{2}\\
 & \leq & ks^{n}\sum_{i=1}^{k}\left\Vert A_{m}^{\otimes n}x_{i}\right\Vert ^{2}\cdot
\end{eqnarray*}
Therefore, 
\[
\left|\left\Vert A_{m}^{\otimes n}x\right\Vert ^{2}-\sum_{i=1}^{k}\left\Vert A_{m}^{\otimes n}x_{i}\right\Vert ^{2}\right|\leq ks^{n}\sum_{i=1}^{k}\left\Vert A_{m}^{\otimes n}x_{i}\right\Vert ^{2}\cdot
\]
 It follows that 
\[
\left\Vert A_{m}^{\otimes n}x\right\Vert ^{2}\leq\left(1+ks^{n}\right)\sum_{i=1}^{k}\left\Vert A_{m}^{\otimes n}x_{i}\right\Vert ^{2}\cdot
\]
 Therefore, 
\begin{equation}
\left\Vert A_{m}^{\otimes n}x\right\Vert ^{2}\leq\left(1+ks^{n}\right)\underset{i\in\left\{ 1,\ldots,k\right\} }{\max}\left\Vert T_{A_{m}}\mid_{H_{N_{i}\cap B_{d}}}\right\Vert ^{2}\sum_{i=1}^{k}\left\Vert x_{i}\right\Vert ^{2}\cdot\label{eq: 5.2}
\end{equation}
 By Lemma \ref{Lemma 5.11} and the definition of $s$, we deduce
that
\begin{equation}
\sum_{i=1}^{k}\left\Vert x_{i}\right\Vert ^{2}\leq\frac{\left\Vert x\right\Vert ^{2}}{1-ks^{n}}\cdot\label{eq: 5.3}
\end{equation}
 From Inequalities \ref{eq: 5.2} and \ref{eq: 5.3} it follows that
\[
\left\Vert A_{m}^{\otimes n}x\right\Vert ^{2}\leq\frac{1+ks^{n}}{1-ks^{n}}\underset{i\in\left\{ 1,\ldots,k\right\} }{\max}\left\Vert T_{A_{m}}\mid_{H_{N_{i}\cap B_{d}}}\right\Vert ^{2}\left\Vert x\right\Vert ^{2}\cdot
\]
Recall that $T_{A_{m}}:H_{V}\rightarrow H_{W_{m}}$ is the RKHS isomorphism,
which is defined by 
\[
T_{A_{m}}\left(\sum_{n=0}^{\infty}\left\langle \cdot,\xi_{n}\right\rangle \right)=\sum_{n=0}^{\infty}\left\langle \cdot,A_{m}^{\otimes n}\left(\xi_{n}\right)\right\rangle ,\quad\oplus_{n=0}^{\infty}\xi_{n}\in\oplus_{n=0}^{\infty}N^{n}.
\]
Let $\oplus_{n=0}^{\infty}\xi_{n}\in\oplus_{n=0}^{\infty}N^{n}$.
Then, for every $\mathit{\mathit{N}\geq N\left(s,k\right)}$,
\begin{flalign*}
\left\Vert T_{A_{m}}\left(\sum_{n=0}^{\infty}\left\langle \cdot,\xi_{n}\right\rangle \right)\right\Vert ^{2} & =\left\Vert \sum_{n=0}^{\infty}\left\langle \cdot,A_{m}^{\otimes n}\left(\xi_{n}\right)\right\rangle \right\Vert ^{2}\\
 & \leq\sum_{n=0}^{N}\left\Vert A_{m}\right\Vert ^{2n}\left\Vert \xi_{n}\right\Vert ^{2}+\sum_{n=N+1}^{\infty}\frac{1+s^{n}k}{1-s^{n}k}\underset{i\in\left\{ 1,\ldots,k\right\} }{\max}\left\Vert T_{A_{m}}\mid_{N_{i}\cap B_{d}}\right\Vert ^{2}\left\Vert \xi_{n}\right\Vert ^{2}\\
 & \leq\max\left\{ \left\Vert A_{m}\right\Vert ^{2N},\frac{1+s^{N}k}{1-s^{N}k}\underset{}{\underset{i\in\left\{ 1,\ldots,k\right\} }{\max}\left\Vert T_{A_{m}}\mid_{N_{i}\cap B_{d}}\right\Vert ^{2}}\right\} \left\Vert \sum_{n=0}^{\infty}\left\langle \cdot,\xi_{n}\right\rangle \right\Vert ^{2}\cdot
\end{flalign*}
 Therefore, we obtain that for every $m\in\mathbb{N}$ and $N\geq N\left(s,k\right)$,
\[
\left\Vert T_{A_{m}}\right\Vert \leq\max\left\{ \left\Vert A_{m}\right\Vert ^{N},\sqrt{\frac{1+s^{N}k}{1-s^{N}k}}\underset{}{\underset{i\in\left\{ 1,\ldots,k\right\} }{\max}\left\Vert T_{A_{m}}\mid_{N_{i}\cap B_{d}}\right\Vert }\right\} \cdot
\]
\end{proof}
We are now ready to prove the Implication $\left(1\right)\Longrightarrow\left(2\right)$
in Theorem \ref{Theorem 1.39} (see Theorem \ref{Theorem 5.14} below).
For convenience, we will first state the following claim:
\begin{claim}
\label{Claim: 5.13} Let $d\in\mathbb{N}$ and $V\subseteq B_{d}$
be a tractable homogeneous variety s.t. $\textrm{span}\left(V\right)=\mathbb{C}^{d}$.
Suppose that $V=V_{1}\cup\cdot\cdot\cdot\cup V_{k}$, for some $k\in\mathbb{N}$,
is the decomposition of $V$ into irreducible components. Set $M:=\cup_{i=1}^{k}\textrm{span}\left(V_{i}\right)$.
Then, $V$ satisfies one of the following conditions:
\end{claim}

\begin{enumerate}
\item $M=\mathbb{C}^{d}$.
\item There exist $k\in\mathbb{N}$ s.t. $k\geq2$ and a finite union $N=\cup_{i=1}^{k}\cup_{r=1}^{k_{i}}N_{i,r}$
of finite unions of subspaces of $\mathbb{C}^{d}$ s.t. the following
hold:
\begin{enumerate}
\item $V\subseteq N$.
\item For every $i\in\left\{ 1,\ldots,k\right\} $, $N_{i}:=\cup_{r=1}^{k_{i}}N_{i,r}$
is a tractable union of subspaces of $\textrm{span}N_{i}\cdot$
\item For every $i,j\in\left\{ 1,\ldots,k\right\} $, if $i\neq j$ then
$\left(\textrm{span}N_{i}\right)\cap\left(\textrm{span}N_{j}\right)=\left\{ 0\right\} \cdot$
\item For every $i\in\left\{ 1,\ldots,k\right\} $, $\dim\left(\textrm{span}N_{i}\right)\leq d-1$. 
\item For every invertible linear  transformation $A:\mathbb{C}^{d}\longrightarrow\mathbb{C}^{d}$,
if $A$ is isometric on $V$ then $A$ is isometric on $N$.
\end{enumerate}
\item $V\subseteq E\oplus N$, where $N\subseteq\mathbb{C}^{d}$ is a tractable
union of subspaces of $\textrm{span}N$ and $\left\{ 0\right\} \neq E\subseteq\mathbb{C}^{d}$
is a subspace, which is orthogonal to $\textrm{span}N$ s.t. for every
invertible linear  transformation $A:\mathbb{C}^{d}\longrightarrow\mathbb{C}^{d}$,
if $A$ is isometric on $V$ then $A$ is isometric on $E\oplus N$. 
\end{enumerate}
\begin{proof}
Note that $V\subseteq M$. Therefore, by \cite[Proposition 7.6]{davidson2011isomorphism}),
it follows that for every invertible linear  transformation $A:\mathbb{C}^{d}\longrightarrow\mathbb{C}^{d}$,
$A$ is isometric on $V$ if and only if $A$ is isometric on $M$.
Now, the proof follows from Definitions \ref{Definition 1.4} , \ref{Definition 1.5}
.
\end{proof}
\begin{thm}
\label{Theorem 5.14}Let $d\in\mathbb{N}$. Let $V$ be a tractable
homogeneous variety in $B_{d}$ s.t. $\textrm{span}\left(V\right)=\mathbb{C}^{d}$.
Suppose that for every $n\in\mathbb{N}$, $A_{n}:\mathbb{C}^{d}\longrightarrow\mathbb{C}^{d}$
is an invertible linear transformation which is isometric on $V$and
set $W_{n}:=A_{n}V$. For every $n\in\mathbb{N}$, let $T_{A_{n}}:H_{V}\rightarrow H_{W_{n}}$
be the RKHS isomorphism determined by 
\[
T_{A_{n}}\left(k_{x}\right)=k_{A_{n}x},\;x\in V\cdot
\]
 If 
\[
\mathit{\left\Vert \mathit{A_{n}}\right\Vert {}_{OP},\left\Vert A_{n}^{-1}\right\Vert {}_{OP}\overset{n\rightarrow\infty}{\longrightarrow}1}
\]
 then 
\[
\left\Vert T_{A_{n}}\right\Vert \left\Vert T_{A_{n}}^{-1}\right\Vert \stackrel{n\rightarrow\infty}{\longrightarrow}1\cdot
\]
\end{thm}

\begin{proof}
We will prove the assertion by induction on $d$: If $d=1$ then $V=B_{d}$.
Therefore, for every $n\in\mathbb{N}$, $A_{n}$ is a unitary. It
follows that for every $n\in\mathbb{N}$, $T_{A_{n}}:H_{V}\rightarrow H_{W_{n}}$
is a unitary. So, if $d=1$ then for every $n\in\mathbb{N}$, $\left\Vert T_{A_{n}}\right\Vert =\left\Vert T_{A_{n}}^{-1}\right\Vert =1$.
Hence, $\left\Vert T_{A_{n}}\right\Vert \left\Vert T_{A_{n}}^{-1}\right\Vert \stackrel{n\rightarrow\infty}{\longrightarrow}1$.
Now, let $d_{0}\in\mathbb{N}$ and assume that the assertion holds
for $d=1,\ldots,d_{0}$. We will show that the assertion holds for
$d=d_{0}+1$: Let $V$ be a tractable homogeneous variety in $B_{d}$
with $\textrm{span}\left(V\right)=\mathbb{C}^{d_{0}+1}$. Suppose
that for every $n\in\mathbb{N}$, $A_{n}:\mathbb{C}^{d_{0}+1}\longrightarrow\mathbb{C}^{d_{0}+1}$
is an invertible linear transformation which is isometric on $V$
and set $W_{n}:=A_{n}V$. Assume that $\mathit{\left\Vert \mathit{A_{n}}\right\Vert {}_{OP},\left\Vert A_{n}^{-1}\right\Vert {}_{OP}\overset{n\rightarrow\infty}{\longrightarrow}1}$.
We shall show what is required by considering the $3$ cases for $V$,
which are listed in Claim \ref{Claim: 5.13} :
\begin{itemize}
\item $M=\mathbb{C}^{d}$, i.e., there is $i\in\left\{ 1,\ldots,k\right\} $
s.t. $M_{i}=\mathbb{C}^{d}$ (here $M_{i}$ and $M$ are defined as
in the statement of Claim \ref{Claim: 5.13} ): In this case, for
every $n\in\mathbb{N}$, $A_{n}$ is a unitary (see\cite[Proposition 7.6]{davidson2011isomorphism}).
Therefore, for every $n\in\mathbb{N}$, $T_{A_{n}}$ has an extension
to a unitary $\widetilde{T_{A_{n}}}:\mathcal{H}_{d}^{2}\rightarrow\mathcal{H}_{d}^{2}$
. Since $V\subseteq B_{d}$ and for every $n\in\mathbb{N}$, $W_{n}\subseteq B_{d}$,
it follows that for every $n\in\mathbb{N}$,
\[
1\leq\left\Vert T_{A_{n}}\right\Vert \leq\left\Vert \widetilde{T_{A_{n}}}\right\Vert =1
\]
 and 
\[
1\leq\left\Vert T_{A_{n}}^{-1}\right\Vert \leq\left\Vert \widetilde{T_{A_{n}}}^{-1}\right\Vert =1\cdot
\]
 Therefore, if $V$ is irreducible then $\left\Vert T_{A_{n}}\right\Vert \left\Vert T_{A_{n}}^{-1}\right\Vert \stackrel{n\rightarrow\infty}{\longrightarrow}1$.
\item $V\subseteq N$, where $N=\cup_{i=1}^{k}\cup_{r=1}^{k_{i}}N_{i,r}$
is a union of unions of subspaces of $\mathbb{C}^{d}$ as in condition
$2$ of Claim \ref{Claim: 5.13}. In this case, for every $n\in\mathbb{N}$,
$T_{A_{n}}$ has an extension to a RKHS isomorphism $\widetilde{T_{A_{n}}}:H_{N\cap B_{d}}\rightarrow H_{A_{n}N\cap B_{d}}$.
By Proposition \ref{Proposition 5.12}, there exist $s\in\mathbb{R}$,
$0\leq s<1$, and $N\left(s,k\right)\in\mathbb{N}$ s.t. for every
$n\in\mathbb{N}$ and $N\geq N\left(s,k\right)$, 
\[
\left\Vert \widetilde{T_{A_{n}}}\right\Vert \leq max\left\{ \left\Vert A_{n}\right\Vert ^{N},\sqrt{\frac{1+s^{N}k}{1-s^{N}k}}\underset{1\leq i\leq k}{\max}\left\Vert \widetilde{T_{A_{n}}}\mid_{H_{N_{i}\cap B_{d}}}\right\Vert \right\} 
\]
 and 

\[
\left\Vert \widetilde{T_{A_{n}}}^{-1}\right\Vert \leq max\left\{ \left\Vert A_{n}^{-1}\right\Vert ^{N},\sqrt{\frac{1+s^{N}k}{1-s^{N}k}}\underset{1\leq i\leq k}{\max}\left\Vert \widetilde{T_{A_{n}}}^{-1}\mid_{H_{A_{n}N_{i}\cap B_{d}}}\right\Vert \right\} \cdot
\]
 It follows that 
\[
\limsup_{n}\left\Vert \widetilde{T_{A_{n}}}\right\Vert \leq\sqrt{\frac{1+s^{N}k}{1-s^{N}k}}\limsup_{n}\underset{1\leq i\leq k}{\max}\left\Vert \widetilde{T_{A_{n}}}\mid_{H_{N_{i}\cap B_{d}}}\right\Vert 
\]
 and 

\[
\limsup_{n}\left\Vert \widetilde{T_{A_{n}}}^{-1}\right\Vert \leq\sqrt{\frac{1+s^{N}k}{1-s^{N}k}}\limsup_{n}\underset{1\leq i\leq k}{\max}\left\Vert \widetilde{T_{A_{n}}}^{-1}\mid_{H_{A_{n}N_{i}\cap B_{d}}}\right\Vert \cdot
\]
 By the induction hypothesis, it follows that 
\[
\limsup_{n}\left\Vert \widetilde{T_{A_{n}}}\right\Vert \leq\sqrt{\frac{1+s^{N}k}{1-s^{N}k}}
\]
 and 

\[
\limsup_{n}\left\Vert \widetilde{T_{A_{n}}}^{-1}\right\Vert \leq\sqrt{\frac{1+s^{N}k}{1-s^{N}k}}\cdot
\]
 Since $0\leq s<1$, it follows that 
\[
\lim_{n}\left\Vert \widetilde{T_{A_{n}}}\right\Vert =1
\]
 and 
\[
\lim_{n}\left\Vert \widetilde{T_{A_{n}}}^{-1}\right\Vert =1\cdot
\]
 Since $V\subseteq N\cap B_{d}$ and for every $n\in\mathbb{N}$,
$W_{n}\subseteq A_{n}N\cap B_{d}$ , it follows that for every $n\in\mathbb{N}$,
\[
1\leq\left\Vert T_{A_{n}}\right\Vert \leq\left\Vert \widetilde{T_{A_{n}}}\right\Vert \underset{n\rightarrow\infty}{\longrightarrow}1
\]
 and 
\[
1\leq\left\Vert T_{A_{n}}^{-1}\right\Vert \leq\left\Vert \widetilde{T_{A_{n}}}^{-1}\right\Vert \underset{n\rightarrow\infty}{\longrightarrow}1\cdot
\]

\item $V$ satisfies condition $3$ in Claim \ref{Claim: 5.13} , i.e.,
$V\subseteq E\oplus N$, where $N\subseteq\mathbb{C}^{d}$ is a tractable
union of subspaces of $\textrm{span}N$ and $\left\{ 0\right\} \neq E\subseteq\mathbb{C}^{d}$
as in condition $3$ of Claim \ref{Claim: 5.13} . In this case, for
every $n\in\mathbb{N}$, $T_{A_{n}}$ has an extension to a RKHS isomorphism
\[
\widetilde{T_{A_{n}}}:H_{\left(E\oplus N\right)\cap B_{d}}\rightarrow H_{A_{n}\left(N\oplus E\right)\cap B_{d}}\cdot
\]
By Corollary \ref{Corollary 5.6} and the induction hypothesis, it
follows that for every $n\in\mathbb{N}$, 
\[
1\leq\left\Vert T_{A_{n}}\right\Vert \leq\left\Vert \widetilde{T_{A_{n}}}\right\Vert =\left\Vert \widetilde{T_{A_{n}}}\mid_{H_{N\cap B_{d}}}\right\Vert \underset{n\rightarrow\infty}{\longrightarrow}1
\]
 and 
\[
1\leq\left\Vert T_{A_{n}}^{-1}\right\Vert \leq\left\Vert \widetilde{T_{A_{n}}}^{-1}\right\Vert =\left\Vert \widetilde{T_{A_{n}}}^{-1}\mid_{H_{A_{n}N\cap B_{d}}}\right\Vert \underset{n\rightarrow\infty}{\longrightarrow}1\cdot
\]
\end{itemize}
\end{proof}
At the end of this Section, it may be worth mentioning that, at present,
we believe that Theorem \ref{Theorem 5.14} remains true even without
the tractability assumption.

\section{Vector Valued Multipliers and Their Multiplier Norms}

\subsection{Useful Equalities for Vector Valued Multipliers}
\begin{prop}
\label{Proposition 71}Let $F=\left(f_{1},\ldots,f_{d}\right)$ and
$G=\left(g_{1},\ldots,g_{d}\right)$ be vector valued multipliers
on $B_{d}$. Let $V\subseteq B_{d}$ be a variety, and let $U:\mathbb{C^{\mathit{d}}}\longrightarrow\mathbb{C^{\mathit{d}}}$
be is a unitary. Then,
\end{prop}

\begin{enumerate}
\item for every $x,y\in B_{d}$, $\left\langle M_{G}M_{F}^{*}k_{x},k_{y}\right\rangle =\left\langle G\left(y\right),F\left(x\right)\right\rangle \left\langle k_{x},k_{y}\right\rangle $.
\item for every $x,y\in V$, $\left\langle M_{F}M_{F}^{*}k_{x},k_{y}\right\rangle =\left\langle M_{F\mid_{V}}M_{F\mid_{V}}^{*}k_{x},k_{y}\right\rangle $.
\item for every $x,y\in V$, 
\[
\left\langle \left(M_{F\mid_{V}}M_{F\mid_{V}}^{*}-M_{G\mid_{V}}M_{G\mid_{V}}^{\ast}\right)k_{x},k_{y}\right\rangle =\left(\left\langle F\left(y\right),F\left(x\right)\right\rangle -\left\langle G\left(y\right),G\left(x\right)\right\rangle \right)\left\langle k_{x},k_{y}\right\rangle \cdot
\]
\item $\sup_{\mathit{x\in V}}\left|\left\Vert F\left(x\right)\right\Vert ^{2}-\left\Vert G\left(x\right)\right\Vert ^{2}\right|\leq\left\Vert M_{F\mid_{V}}M_{F\mid_{V}}^{*}-M_{G\mid_{V}}M_{G\mid_{V}}^{*}\right\Vert $.
\item $M_{UF}M_{UG}^{*}=M_{F}M_{G}^{\ast}$.
\end{enumerate}
\begin{proof}
Let , $x,y\in B_{d}$.Then,
\begin{enumerate}
\item For every $i\in\left\{ 1,\ldots,d\right\} $, we have 
\[
\left\langle M_{g_{i}}M_{f_{i}}^{*}k_{x},k_{y}\right\rangle =\left\langle \overline{f_{i}\left(x\right)}k_{x},\overline{g_{i}\left(y\right)}k_{y}\right\rangle =g_{i}\left(y\right)\overline{f_{i}\left(x\right)}\left\langle k_{x},k_{y}\right\rangle \cdot
\]
Therefore, 
\begin{flalign*}
\left\langle M_{G}M_{F}^{*}k_{x},k_{y}\right\rangle  & =\left\langle k_{x},k_{y}\right\rangle \sum_{i=1}^{d}g_{i}\left(y\right)\overline{f_{i}\left(x\right)}\\
 & =\left\langle k_{x},k_{y}\right\rangle \sum_{i=1}^{d}\left\langle G\left(y\right),e_{i}\right\rangle \overline{\left\langle F\left(x\right),e_{i}\right\rangle }\\
 & =\left\langle k_{x},k_{y}\right\rangle \left\langle G\left(y\right),F\left(x\right)\right\rangle .
\end{flalign*}
 Where $\left\{ e_{i}\right\} _{i=1}^{d}$ is the standard basis of
$\mathbb{C^{\mathit{d}}}$.
\item Let $x,y\in V$. Note that 
\begin{eqnarray*}
\left\langle M_{F}M_{F}^{\ast}k_{x},k_{y}\right\rangle  & = & \left\langle \left(M_{F}\right)\mid_{\oplus_{i=1}^{d}H_{V}}M_{F}^{*}k_{x},k_{y}\right\rangle \\
 & = & \left\langle P_{H_{V}}\left(M_{F}\right)\mid_{\oplus_{i=1}^{d}H_{V}}M_{F}^{\ast}P_{H_{V}}k_{x},k_{y}\right\rangle =\left\langle M_{F\mid_{V}}M_{F\mid_{V}}^{*}k_{x},k_{y}\right\rangle \cdot
\end{eqnarray*}
\item The assertion follows by $\left(2\right)$
\item The assertion follows by $\left(3\right)$.
\item By $\left(1\right)$ we have that for every $x,y\in B_{d}$,
\[
\left\langle M_{UF}M_{UG}^{*}k_{x},k_{y}\right\rangle =\left\langle UFy,UGx\right\rangle \left\langle k_{x},k_{y}\right\rangle =\left\langle Fy,Gx\right\rangle \left\langle k_{x},k_{y}\right\rangle =\left\langle M_{F}M_{G}^{*}k_{x},k_{y}\right\rangle .
\]
 Therefore, $M_{UF}M_{UG}^{*}=M_{F}M_{G}^{*}$.
\end{enumerate}
\end{proof}

\subsection{Linear transformations as Vector Valued Multipliers}
\begin{notation}
Let $\left\{ e_{i}\right\} _{i=1}^{d}$ denote the standard basis
of $\mathbb{C^{\mathit{d}}}$. For any linear map $A:\mathbb{C^{\mathit{d}}}\longrightarrow\mathbb{C^{\mathit{d}}}$
and every $i\in\left\{ 1,\ldots,d\right\} $, let $A_{i}:\mathbb{C^{\mathit{d}}}\longrightarrow\mathbb{C}$
denote the linear functional, which is given by 
\[
A_{i}\left(x\right)=\left\langle Ax,e_{i}\right\rangle \quad x\in\mathbb{C^{\mathit{d}}}.
\]
 Then, 
\[
A=\left(A_{1},\ldots,A_{d}\right)\cdot
\]
\end{notation}

Note that $A_{i}$ is either a homogeneous polynomial of degree $1$,
or the zero ploynomial. Therefore, $A_{i}\in\mathcal{M}_{d}$. Hence,
\[
A=\left(A_{1},\ldots,A_{d}\right):B_{d}\longrightarrow\mathbb{C^{\mathit{d}}}
\]
 is a vector valued multiplier on $B_{d}$. By definition, for every
$i\in\left\{ 1,\ldots,d\right\} $, the $\mathcal{H_{\mathit{d}}^{\mathit{2}}}$-norm
of $A_{i}$ is $\left\Vert A^{\ast}\left(e_{i}\right)\right\Vert $,
(see \cite[Section 1]{arveson1998subalgebras}). That is, 
\[
\left\Vert A_{i}\right\Vert {}_{\mathcal{H_{\mathit{d}}^{\mathit{2}}}}=\left\Vert A^{\ast}\left(e_{i}\right)\right\Vert \cdot
\]
Moreover, making use of the Riesz theorem yields 
\[
\left\Vert A_{i}\right\Vert _{OP}=\left\Vert A^{*}\left(e_{i}\right)\right\Vert \cdot
\]
By Proposition \ref{Proposition 74} below, we have 
\[
\left\Vert A_{i}\right\Vert _{\mathcal{M}_{d}}=\left\Vert A_{i}\right\Vert _{\mathcal{H}_{d}^{2}}\cdot
\]
Hence, we obtain the following Corollary.
\begin{cor}
\label{Corollary 73}Let $A:\mathbb{C^{\mathit{d}}}\longrightarrow\mathbb{C^{\mathit{d}}}$
be a linear map. Then,
\end{cor}

\begin{enumerate}
\item $\left\Vert A_{i}\right\Vert _{\mathcal{M}_{d}}=\left\Vert A_{i}\right\Vert {}_{\mathcal{H_{\mathit{d}}^{\mathit{2}}}}=\left\Vert A^{\ast}\left(e_{i}\right)\right\Vert =\left\Vert A_{i}\right\Vert _{OP}=\sqrt{\sum_{j=1}^{d}\left|\left\langle Ae_{j},e_{i}\right\rangle \right|^{2}}$
.
\item $\left\Vert A\right\Vert _{\oplus_{i=1}^{d}\mathcal{H_{\mathit{d}}^{\mathit{2}}}}=\sqrt{\sum_{i,j=1}^{d}\left|\left\langle Ae_{j},e_{i}\right\rangle \right|^{2}}$
.
\end{enumerate}
\begin{prop}
\label{Proposition 74}Let $n\in\mathbb{N}_{0}$. Suppose that $f$
is a homogeneous polynomial of degee $n$ given by
\[
f\left(x\right)=\langle x^{n},\xi\rangle,\quad\textrm{for all }x\in B_{d},
\]
for some $\xi\in\left(\mathbb{C}^{d}\right)^{n}$. Then, $f\in\mathcal{M}_{d}$
and $\left\Vert f\right\Vert _{\mathcal{M}_{d}}=\left\Vert f\right\Vert _{\mathcal{H}_{d}^{2}}$.
\end{prop}

\begin{proof}
Let $g=\sum_{k=0}^{\infty}\left\langle \cdot,\xi_{n}\right\rangle \in\mathcal{H}_{d}^{2}$
and let $m\in\mathbb{N}_{0}$. Then, 
\[
f\cdot g_{m}=\left\langle \cdot,\xi\right\rangle _{\left(\mathbb{C}^{d}\right)^{n}}\cdot\left\langle \cdot,\xi_{m}\right\rangle _{\left(\mathbb{C}^{d}\right)^{m}}=\left\langle \cdot,P_{\left(\mathbb{C}^{d}\right)^{n+m}}\left(\xi\otimes\xi_{m}\right)\right\rangle _{\left(\mathbb{C}^{d}\right)^{n+m}},
\]
where $P_{\left(\mathbb{C}^{d}\right)^{n+m}}$ is the orthogonal projection
of $\left(\mathbb{C}^{d}\right)^{\otimes\left(n+m\right)}$ onto $\left(\mathbb{C}^{d}\right)^{n+m}$.
Therefore, $fg_{m}\in\mathcal{H}_{d}^{2}$ and
\[
\left\Vert f\cdot g_{m}\right\Vert _{\mathcal{H}_{d}^{2}}\leq\left\Vert f\right\Vert _{\mathcal{H}_{d}^{2}}\left\Vert g_{m}\right\Vert _{\mathcal{H}_{d}^{2}}\cdot
\]
It follows that, 
\[
\sum_{m=0}^{\infty}\left\Vert f\cdot g_{m}\right\Vert _{\mathcal{H}_{d}^{2}}^{2}\leq\left\Vert f\right\Vert _{\mathcal{H}_{d}^{2}}^{2}\left\Vert g\right\Vert _{\mathcal{H}_{d}^{2}}^{2}<\infty\cdot
\]
Therefore, 
\[
\sum_{m=0}^{\infty}f\cdot g_{m}\in\mathcal{H}_{d}^{2}\cdot
\]
 Therefore, we obtain that for every $x\in B_{d}$,
\[
f\left(x\right)g\left(x\right)=\sum_{m=0}^{\infty}f\left(x\right)g_{m}\left(x\right)=\sum_{m=0}^{\infty}\left\langle x^{m},P_{\left(\mathbb{C}^{d}\right)^{n+m}}\left(\xi\otimes\xi_{m}\right)\right\rangle _{\left(\mathbb{C}^{d}\right)^{n+m}}
\]
and that 
\[
fg=\sum_{m=0}^{\infty}fg_{m}\in\mathcal{H}_{d}^{2}
\]
with

\[
\left\Vert fg\right\Vert _{\mathcal{H}_{d}^{2}}=\sqrt{\sum_{m=0}^{\infty}\left\Vert f\cdot g_{m}\right\Vert _{\mathcal{H}_{d}^{2}}^{2}}\leq\left\Vert f\right\Vert _{\mathcal{H}_{d}^{2}}\left\Vert g\right\Vert _{\mathcal{H}_{d}^{2}}\cdot
\]
Hence, $\left\Vert f\right\Vert _{\mathcal{M}_{d}}\leq\left\Vert f\right\Vert _{\mathcal{H}_{d}^{2}}$.
Now, since always we have $\left\Vert f\right\Vert _{\mathcal{M}_{d}}\geq\left\Vert f\right\Vert _{\mathcal{H}_{d}^{2}}$,
it follows that $\left\Vert f\right\Vert _{\mathcal{M}_{d}}=\left\Vert f\right\Vert _{\mathcal{H}_{d}^{2}}$.
\end{proof}
\begin{prop}
\label{Proposition 75}Let $A,B:\mathbb{C^{\mathit{d}}}\longrightarrow\mathbb{C^{\mathit{d}}}$
be linear transformations. Then, 
\[
M_{A}M_{B}^{*}=M_{B^{*}A}M_{{\bf Id}}^{\ast}=M_{{\bf Id}}M_{A^{*}B}^{*}\cdot
\]
\end{prop}

\begin{proof}
By Proposition \ref{Proposition 71}, we have that for every $x,y\in B_{d}$,
\begin{eqnarray*}
\left\langle M_{B^{*}A}M_{{\bf Id}}^{*}k_{x},k_{y}\right\rangle  & = & \left\langle B^{*}Ay,{\bf Id}x\right\rangle \left\langle k_{x},k_{y}\right\rangle \\
 & = & \left\langle Ay,Bx\right\rangle \left\langle k_{x},k_{y}\right\rangle \\
 & = & \left\langle M_{A}M_{B}^{*}k_{x},k_{y}\right\rangle .
\end{eqnarray*}
 Therefore, $M_{B^{*}A}M_{{\bf Id}}^{*}=M_{A}M_{B}^{*}$. The proof
for $M_{A}M_{B}^{*}=M_{{\bf Id}}M_{A^{*}B}^{*}$ is the same.
\end{proof}
\begin{prop}
Let $A:\mathbb{C}^{d}\longrightarrow\mathbb{C}^{d}$ be a linear map.
Let $V\subseteq B_{d}$ be a variety. Then, 
\[
M_{A\mid_{V}}M_{A\mid_{V}}^{*}=M_{{\bf Id\mid_{\mathit{V}}}}M_{{\bf Id\mid_{\mathit{V}}}}^{*}
\]
 if and only if $A$ is an isometry on $\textrm{\text{span}}\left(V\right)$.
\end{prop}

\begin{proof}
By Proposition \ref{Proposition 71}, for every $x,y\in\mathit{V}$,
\[
\left\langle \left(M_{A\mid_{V}}M_{A\mid_{V}}^{*}-M_{{\bf Id\mid_{\mathit{V}}}}M_{{\bf Id\mid_{\mathit{V}}}}^{\ast}\right)k_{x},k_{y}\right\rangle =\left(\left\langle A\left(y\right),A\left(x\right)\right\rangle -\left\langle y,x\right\rangle \right)\left\langle k_{x},k_{y}\right\rangle \cdot
\]
 It follows that 
\[
M_{A\mid_{V}}M_{A\mid_{V}}^{*}=M_{{\bf Id\mid_{\mathit{V}}}}M_{{\bf Id\mid_{\mathit{V}}}}^{*}
\]
 if and only if for every $x,y\in V$, 
\[
\left\langle A\left(y\right),A\left(x\right)\right\rangle =\left\langle y,x\right\rangle \cdot
\]
Now, use the fact that for every $x,y\in\textrm{\text{span}}\left(V\right)$,
\[
\langle A\left(y\right),A\left(x\right)\rangle=\left\langle y,x\right\rangle 
\]
 if and only if for every $x,y\in V$, 
\[
\left\langle A\left(y\right),A\left(x\right)\right\rangle =\left\langle y,x\right\rangle \cdot
\]
\end{proof}

\subsection{The Multiplier Norms of Linear transformations on Homogeneous Varieties\label{sec:Linear-maps-as}}

In this section, the main result we present is that the multiplier
norm of a linear map on a homogeneous variety is equal to its operator
norm on the subspace, of $\mathbb{C}^{d}$, spanned by the variety.
\begin{thm}
\label{Theorem 77}Let $A:\mathbb{C^{\mathit{d}}}\longrightarrow\mathbb{C^{\mathit{d}}}$
be a linear map. Then, $\left\Vert M_{A}\right\Vert =\left\Vert A\right\Vert _{OP}\cdot$
\end{thm}

\begin{proof}
Let $A:\mathbb{C^{\mathit{d}}}\longrightarrow\mathbb{C^{\mathit{d}}}$
be a linear map. Let $x\in B_{d}$, and let $\rho_{x}:\mathcal{M_{\mathit{d}}\longrightarrow\mathbb{C}}$
be the point evaluation linear functional at $x$. Then,
\[
\left\Vert A\left(x\right)\right\Vert =\left\Vert \left(A_{1}\left(x\right),\ldots,A_{d}\left(x\right)\right)\right\Vert =\left\Vert \left(\rho_{x}\circ A_{1},\ldots,\rho_{x}\circ A_{d}\right)\right\Vert .
\]
Since $\rho_{x}$ is completely contractive, it follows that 
\[
\left\Vert A\left(x\right)\right\Vert \leq\left\Vert \left(A_{1},\ldots A_{d}\right)\right\Vert _{Mult\left(B_{d}\right)}:=\parallel M_{A}\parallel.
\]
Therefore,
\[
\left\Vert A\right\Vert _{OP}=\sup_{x\in B_{d}}\left\Vert A\left(x\right)\right\Vert \leq\left\Vert M_{A}\right\Vert .
\]
It remains to show that $\left\Vert M_{A}\right\Vert \leq\left\Vert A\right\Vert {}_{OP}$:
First, assume that $A:\mathbb{C^{\mathit{d}}}\longrightarrow\mathbb{C^{\mathit{d}}}$
is a unitarily diagonalizable. In this case, there exists an unitary
$U:\mathbb{C^{\mathit{d}}}\longrightarrow\mathbb{C^{\mathit{d}}}$,
and $\lambda=\left(\lambda_{1},\ldots,\lambda_{d}\right)\in\mathbb{C^{\mathit{d}}}$
s.t. 
\[
UAU^{*}e_{i}=\lambda_{i}e_{i},\;\textrm{for all }i\in\left\{ 1,\ldots,d\right\} .
\]
Let $x\in B_{d}$. Then, 
\begin{eqnarray*}
\left(UA\right)_{i}\left(x\right) & = & \left\langle UAx,e_{i}\right\rangle =\sum_{j=1}^{d}\left\langle x,U^{*}e_{j}\right\rangle \left\langle UAU^{*}e_{j},e_{i}\right\rangle \cdot\\
 & = & \lambda_{i}\langle Ux,e_{i}\rangle=\lambda_{i}U_{i}\left(x\right).
\end{eqnarray*}
That is, $\left(UA\right)_{i}=\lambda_{i}U_{i}$. Now, let $\left(f_{1},\ldots,f_{d}\right)\in\oplus_{i=1}^{d}\mathcal{H_{\mathit{d}}^{\mathit{2}}}$.
Then, 
\begin{align*}
\left\Vert M_{UA}\left(f_{1},\ldots,f_{d}\right)\right\Vert ^{2} & =\left\Vert \sum_{i=1}^{d}\lambda_{i}U_{i}f_{i}\right\Vert ^{2}\\
 & =\left\Vert M_{U}\left(\lambda_{1}f_{1},\ldots,\lambda_{d}f_{d}\right)\right\Vert ^{2}\\
 & \leq\left\Vert \left(\lambda_{1}f_{1},\dots,\lambda_{d}f_{d}\right)\right\Vert ^{2}\\
 & =\sum_{i=1}^{d}\mid\lambda_{i}\mid^{2}\parallel f_{i}\parallel^{2}\\
 & \leq\left(\max_{i\in\left\{ 1,\ldots d\right\} }\left\{ \mid\lambda_{i}\mid\right\} \right)^{2}\left\Vert \left(f_{1},\ldots,f_{d}\right)\right\Vert ^{2}\\
 & =\left\Vert A\right\Vert _{OP}^{2}\parallel\left\Vert f_{1},\ldots,f_{d}\right\Vert ^{2}\cdot
\end{align*}
Hence, 
\[
\left\Vert M_{UA}\right\Vert \leq\left\Vert A\right\Vert _{OP}\cdot
\]
 By Proposition \ref{Proposition 71}, we have $\left\Vert M_{UA}\right\Vert =\left\Vert M_{A}\right\Vert $.
Therefore, 
\[
\left\Vert M_{A}\right\Vert \leq\left\Vert A\right\Vert _{OP}.
\]
 Finally, let $A:\mathbb{C^{\mathit{d}}}\longrightarrow\mathbb{C^{\mathit{d}}}$
be any linear map. Since $A^{*}A$ is a normal linear map, we have
\[
\left\Vert M_{A^{*}A}\right\Vert =\left\Vert A^{*}A\right\Vert _{OP}
\]
And, by Proposition \ref{Proposition 75}, we have
\[
\left\Vert M_{A}M_{A}^{*}\right\Vert =\left\Vert M_{A^{*}A}M_{{\bf Id}}^{*}\right\Vert .
\]
It follows that, 
\[
\left\Vert M_{A}\right\Vert ^{2}=\left\Vert M_{A}M_{A}^{*}\right\Vert \leq\left\Vert M_{A^{*}A}\right\Vert \left\Vert M_{{\bf Id}}^{*}\right\Vert =\left\Vert M_{A^{*}A}\right\Vert =\left\Vert A\right\Vert _{OP}^{2}.
\]
 Therefore, $\left\Vert M_{A}\right\Vert \leq\left\Vert A\right\Vert _{OP}.$
Hence, $\left\Vert M_{A}\right\Vert =\left\Vert A\right\Vert _{OP}$.
\end{proof}
\begin{lem}
\label{Lemma 5.6}Let $V\subseteq B_{d}$ be a homogeneous variety.
Let $A:\mathbb{C}^{d}\longrightarrow\mathbb{C}^{d}$ be a linear map.
Let $F=\left(f_{1},\ldots,f_{d}\right)$, where for every $i=1,\ldots,d$,
$f_{i}\in\mathcal{H}_{d}^{2}$ . For every $n\in\mathbb{N}_{0}$,
let $F_{n}$ be the $n$-th homogeneous component of $F$. If $F\mid_{V}=A\mid_{V}$,
then 
\[
A\mid_{B_{d}\cap\textrm{span}\left(V\right)}=\left(F_{1}\right)\mid_{B_{d}\cap\textrm{span}\left(V\right)}\cdot
\]
\end{lem}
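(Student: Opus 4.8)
The plan is to reduce the statement to Theorem \ref{Theorem 2.20}: since $V$ is homogeneous, every homogeneous component of a function lying in $I_V$ again vanishes on $V$. The starting observation is that a linear map is, tautologically, concentrated in homogeneous degree $1$: for each $i$ the coordinate functional $A_i=\langle A\,\cdot\,,e_i\rangle$ is either the zero polynomial or a homogeneous polynomial of degree $1$, so $A_i$ is a polynomial and hence $A_i\in\mathcal{M}_d\subseteq\mathcal{H}_d^2$, and in its homogeneous expansion $(A_i)_1=A_i$ while $(A_i)_n=0$ for $n\ne 1$. In particular $A_1=A$ as vector-valued functions.

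Next I would fix $i\in\{1,\ldots,d\}$ and set $g_i:=f_i-A_i\in\mathcal{H}_d^2$. The hypothesis $F\mid_V=A\mid_V$ says precisely that $g_i(x)=0$ for every $x\in V$, i.e.\ $g_i\in I_V$. Since the assignment $h\mapsto h_n$ of homogeneous components is linear (it is given by the integral formula in Proposition \ref{Proposition 2.3}), we have $(g_i)_1=(f_i)_1-(A_i)_1=(f_i)_1-A_i$. By Theorem \ref{Theorem 2.20}, the homogeneity of $V$ forces $(g_i)_n\in J_V$ for all $n$; applying this with $n=1$ shows that $(f_i)_1-A_i$ vanishes on $V$, i.e.\ $(f_i)_1$ and $A_i$ agree on $V$.

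Finally, $(f_i)_1$ is a homogeneous polynomial of degree $1$ (or the zero polynomial), hence a linear functional on $\mathbb{C}^d$, as is $A_i$; two linear functionals agreeing on $V$ agree on $\textrm{span}(V)$, and a fortiori on $B_d\cap\textrm{span}(V)$. Running over $i=1,\ldots,d$ and collecting coordinates yields $F_1=\big((f_1)_1,\ldots,(f_d)_1\big)=(A_1,\ldots,A_d)=A$ on $B_d\cap\textrm{span}(V)$, which is the claim. I do not anticipate any genuine difficulty here; the only points needing care are invoking the homogeneity of $V$ in exactly the form of Theorem \ref{Theorem 2.20} to push the vanishing down to the degree-$1$ component, together with the elementary but load-bearing fact that a linear map has no homogeneous part outside degree $1$, so that $(A_i)_1=A_i$.
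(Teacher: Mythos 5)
Your proposal is correct and follows essentially the same route as the paper's proof: apply Theorem \ref{Theorem 2.20} to the difference $F-A$ (you do it coordinatewise via $g_i=f_i-A_i\in I_V$), use that $A$ is concentrated in homogeneous degree $1$ so the degree-$1$ component of the difference is $F_1-A$, and then extend the resulting agreement on $V$ to $\textrm{span}(V)$ by linearity. Your write-up just makes explicit the steps the paper leaves implicit (linearity of taking homogeneous components and $(A_i)_1=A_i$), so there is nothing to add.
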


\begin{proof}
Note that $F=\sum_{n=1}^{\infty}F_{n}$, because $F\left(0\right)=0$.
By assumption, for every $x\in V$, we have
\[
A\left(x\right)-\sum_{n=1}^{\infty}F_{n}\left(x\right)=0\cdot
\]
 Since $V$ is homogeneous, it follows, by Theorem \ref{Theorem 36},
that $A\mid_{V}=\left(F_{1}\right)\mid_{V}$ and for every $2\leq n\in\mathbb{N}$,
$\left(F_{n}\right)\mid_{V}=0$. Since $F_{1}$ is uniquely extened
to a linear map on $\mathbb{C}^{d}$, we obtain 
\[
A\mid_{B_{d}\cap\textrm{span}\left(V\right)}=\left(F_{1}\right)\mid_{B_{d}\cap\textrm{span}\left(V\right)}\cdot
\]
\end{proof}
\begin{thm}
\label{Theorem 6.9}Let $V\subseteq B_{d}$ be a homogeneous variety.
Let $A:\mathbb{C}^{d}\longrightarrow\mathbb{C}^{d}$ be a linear map.
Then,
\[
\left\Vert A\mid_{V}\right\Vert _{Mult\left(V\right)}=\left\Vert A\mid_{\textrm{span}\left(V\right)}\right\Vert _{OP}\cdot
\]
\end{thm}

\begin{proof}
By Proposition \ref{Proposition 1.14}, there exists a vector valued
multiplier $F=\sum_{n=1}^{\infty}F_{n}$ on $B_{d}$, s.t. $F\mid_{V}=A\mid_{V}$
with $\left\Vert F\right\Vert _{\mathit{Mult}\left(B_{d}\right)}=\left\Vert A\mid_{V}\right\Vert _{Mult\left(V\right)}$.
Since $F\mid_{V}=A\mid_{V}$, it follows, by Lemma \ref{Lemma 5.6},
that
\[
A\mid_{B_{d}\cap\textrm{span}\left(V\right)}=\left(F_{1}\right)\mid_{B_{d}\cap\textrm{span}\left(V\right)}\cdot
\]
 By \cite[Lemma 6.1]{davidson2011isomorphism}, we have $\left\Vert F_{1}\right\Vert _{\mathit{Mult\left(B_{d}\right)}}\leq\left\Vert F\right\Vert _{Mult\left(B_{d}\right)}$.
Therefore, 
\begin{eqnarray*}
\left\Vert A\mid_{V}\right\Vert _{Mult\left(V\right)} & \leq & \left\Vert A\mid_{B_{d}\cap\textrm{span}\left(V\right)}\right\Vert _{Mult\left(B_{d}\cap\textrm{span}\left(V\right)\right)}\\
 & \leq & \left\Vert F_{1}\right\Vert _{\mathit{Mult\left(B_{d}\right)}}\\
 & \leq & \left\Vert F\right\Vert _{Mult\left(B_{d}\right)}\\
 & = & \left\Vert A\mid_{V}\right\Vert _{Mult\left(V\right)}\cdot
\end{eqnarray*}
 Hence, 
\[
\left\Vert A\mid_{V}\right\Vert _{Mult\left(V\right)}=\left\Vert A\mid_{B_{d}\cap\textrm{span}\left(V\right)}\right\Vert _{Mult\left(B_{d}\cap\textrm{span}\left(V\right)\right)}\cdot
\]
By Theorem \ref{Theorem 77}, 
\[
\left\Vert A\mid_{\textrm{span}\left(V\right)}\right\Vert _{OP}=\left\Vert A\mid_{B_{d}\cap\textrm{span}\left(V\right)}\right\Vert _{Mult\left(B_{d}\cap\textrm{span}\left(V\right)\right)}\cdot
\]
 Therefore, 
\[
\left\Vert A\mid_{V}\right\Vert _{Mult\left(V\right)}=\left\Vert A\mid_{\textrm{span}\left(V\right)}\right\Vert _{OP}\cdot
\]
\end{proof}
\begin{thm}
Let $V,W\subseteq B_{d}$ be homogeneous varieties. Suppose that $F=\left(f_{1},\ldots,f_{d}\right)$
is a biholomorphism of $V$onto $W$, with $F\left(0\right)=0$, where
for every $i=1,\ldots,d$, $f_{i}\in\mathcal{H}_{d}^{2}$. Then, 
\[
F\mid_{V}=\left(F_{1}\right)\mid_{V}\cdot
\]
\end{thm}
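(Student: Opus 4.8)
The plan is to reduce the statement to two facts that are already available: that a biholomorphism between homogeneous varieties fixing the origin is the restriction of a linear map, and Lemma~\ref{Lemma 5.6}. First I would record that, since each $f_{i}\in\mathcal{H}_{d}^{2}$ is analytic on $B_{d}$, the $d$-tuple $F=\left(f_{1},\ldots,f_{d}\right)$ is a holomorphic map $B_{d}\longrightarrow\mathbb{C}^{d}$, and by hypothesis it is a biholomorphism of the homogeneous variety $V$ onto the homogeneous variety $W$ with $F\left(0\right)=0$. Hence, by \cite[Proposition 7.4]{key-5} (the result invoked in Remark~\ref{Remark 1.0.15}), there exists an invertible linear map $A\colon\mathbb{C}^{d}\longrightarrow\mathbb{C}^{d}$ with $F\mid_{V}=A\mid_{V}$; this proposition applies to $F$ itself precisely because $F$ already fixes the origin, so no preliminary normalization is needed.

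Next I would feed this $A$ into Lemma~\ref{Lemma 5.6}, whose hypotheses are met verbatim: $V$ is homogeneous, $A$ is linear, every $f_{i}$ lies in $\mathcal{H}_{d}^{2}$, and $F\mid_{V}=A\mid_{V}$. Its conclusion is
\[
A\mid_{B_{d}\cap\textrm{span}\left(V\right)}=\left(F_{1}\right)\mid_{B_{d}\cap\textrm{span}\left(V\right)}.
\]
Restricting this identity to $V\subseteq B_{d}\cap\textrm{span}\left(V\right)$ gives $\left(F_{1}\right)\mid_{V}=A\mid_{V}$, and combining with $F\mid_{V}=A\mid_{V}$ yields $F\mid_{V}=\left(F_{1}\right)\mid_{V}$, as claimed. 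If one prefers not to quote Lemma~\ref{Lemma 5.6} as a black box, one can instead unwind its proof: since $F\left(0\right)=0$ one may write $F=\sum_{n\ge 1}F_{n}$ (the series converging pointwise on $B_{d}$ because the $f_{i}\in\mathcal{H}_{d}^{2}$), and applying Theorem~\ref{Theorem 2.20} componentwise to the functions $A_{i}-f_{i}\in I_{V}$ forces $\left(F_{n}\right)\mid_{V}=0$ for all $n\ge 2$ and $\left(F_{1}\right)\mid_{V}=A\mid_{V}$, whence $F\mid_{V}=\sum_{n\ge 1}\left(F_{n}\right)\mid_{V}=\left(F_{1}\right)\mid_{V}$.

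There is no delicate step once the linearization is in hand; the only place where care is warranted is the appeal to \cite[Proposition 7.4]{key-5}, namely ensuring it is invoked for the given $F$ rather than for an auxiliary biholomorphism manufactured from it. This is legitimate because $F\left(0\right)=0$; alternatively, to route only through the statements as quoted here, one can first apply Lemma~\ref{Lemma 1.14} to obtain a biholomorphism fixing $0$ and then argue as in Remark~\ref{Remark 1.0.15}. Either way, the homogeneity of both $V$ and $W$ is exactly what makes the linearization possible, and Lemma~\ref{Lemma 5.6} then closes the argument.
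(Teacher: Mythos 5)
Your proof is correct and takes essentially the same route as the paper: linearize $F$ via Remark \ref{Remark 1.0.15} (i.e., \cite[Proposition 7.4]{key-5}, applicable directly since $F\left(0\right)=0$) to obtain a linear $A$ with $F\mid_{V}=A\mid_{V}$, then conclude with Lemma \ref{Lemma 5.6}. The extra remarks on unwinding Lemma \ref{Lemma 5.6} via Theorem \ref{Theorem 2.20} are fine but not needed.
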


\begin{proof}
Since $F\left(0\right)=0$ and $V,W$ are homogeneous, it follows,
by Theorem \ref{Theorem 1.34}, that there exists a linear map $A:\mathbb{C}^{d}\longrightarrow\mathbb{C}^{d}$
s.t. that $F\mid_{V}=A\mid_{V}$. Now, the assertion follows from
Lemma \ref{Lemma 5.6}.
\end{proof}

\subsection{Automorphisms of $B_{d}$ as Vector Valued Multipliers and the Induced
Projections on the Drury-Arveson space}
\begin{thm}
\label{Theorem 713}Let $F=\left(f_{1},\ldots,f_{d}\right)$ be a
vector valued multiplier on $B_{d}$. Suppose that $F\left(a\right)=0$
for some $a\in B_{d}$. Then, 
\[
M_{F}M_{F}^{*}+P_{H_{\left\{ a\right\} }}={\bf Id}
\]
 if and only if 
\[
F\in{\bf Aut\left(\mathit{B_{d}}\right)},
\]
Where $P_{H_{\left\{ a\right\} }}$ is the orthogonal projection of
$\mathcal{H}_{d}^{2}$ onto $H_{\left\{ a\right\} }$; i.e., for every
$f\in\mathcal{H}_{d}^{2}$, $P_{H_{\left\{ a\right\} }}\left(f\right)=\left\langle f,\frac{k_{a}}{\left\Vert k_{a}\right\Vert }\right\rangle \frac{k_{a}}{\left\Vert k_{a}\right\Vert }$.
\end{thm}

\begin{proof}
Let $x,y\in B_{d}$. Then, 
\[
\left\langle P_{H_{\left\{ a\right\} }}k_{x},k_{y}\right\rangle =\frac{1}{\left\Vert k_{a}\right\Vert ^{2}}\left\langle k_{x},k_{a}\right\rangle \left\langle k_{a},k_{y}\right\rangle =\overline{\delta_{a}\left(x\right)}\delta_{a}\left(y\right),
\]
where for every $x\in B_{d}$, $\delta_{a}\left(x\right):=\frac{k_{a}\left(x\right)}{\left\Vert k_{a}\right\Vert }$.
Now, suppose that $F\in{\bf Aut\left(\mathit{\mathit{B_{d}}}\right)}$.
Then, by Proposition \ref{Theorem 1.31}, it follows that 
\[
\left\langle P_{H_{\left\{ a\right\} }}k_{x},k_{y}\right\rangle =\frac{\left\langle k_{x},k_{y}\right\rangle }{\left\langle k_{F\left(x\right)},k_{F\left(y\right)}\right\rangle }=\left(1-\left\langle F\left(y\right),F\left(x\right)\right\rangle \right)\left\langle k_{x},k_{y}\right\rangle \cdot
\]
By Proposition \ref{Proposition 71}, we have $\left\langle M_{F}M_{F}^{*}k_{x},k_{y}\right\rangle =\left\langle F\left(y\right),F\left(x\right)\right\rangle \left\langle k_{x},k_{y}\right\rangle $.
Therefore, 
\[
\left\langle \left(M_{F}M_{F}^{*}+P_{H_{\left\{ a\right\} }}\right)k_{x},k_{y}\right\rangle =\left\langle k_{x},k_{y}\right\rangle ,\,x,y\in B_{d}\cdot
\]
 Hence, 
\[
M_{F}M_{F}^{*}+P_{H_{\left\{ a\right\} }}={\bf Id}\cdot
\]
 Conversely, suppose that 
\[
M_{F}M_{F}^{*}+P_{H_{\left\{ a\right\} }}={\bf Id}\cdot
\]
 Let $\Phi_{a}$ be the elementary automorphism of $B_{d}$ that interchanges
$a\text{\  and \  0}$. Then, by the first direction above, we have
\[
M_{\Phi_{a}}M_{\Phi_{a}}^{*}={\bf Id}-P_{H_{\left\{ a\right\} }}\cdot
\]
Therefore,
\[
M_{F}M_{F}^{*}=M_{\Phi_{a}}M_{\Phi_{a}}^{*},
\]
It follows that 
\[
\left\langle M_{F}M_{F}^{*}k_{x},k_{y}\right\rangle =\left\langle M_{\Phi_{a}}M_{\Phi_{a}}^{*}k_{x},k_{y}\right\rangle ,\,x,y\in B_{d}\cdot
\]
Hence, 
\[
\left\langle F\left(y\right),F\left(x\right)\right\rangle \left\langle k_{x},k_{y}\right\rangle =\left\langle \Phi_{a}\left(y\right),\Phi_{a}\left(x\right)\right\rangle \left\langle k_{x},k_{y}\right\rangle ,\,x,y\in B_{d}\cdot
\]
 Therefore, 
\[
\left\langle F\left(y\right),F\left(x\right)\right\rangle =\left\langle \Phi_{a}\left(y\right),\Phi_{a}\left(x\right)\right\rangle ,\,x,y\in B_{d}\cdot
\]
 Now, since $\Phi_{a}\circ\Phi_{a}={\bf Id}$, it follows that 
\[
\left\langle F\circ\Phi_{a}\left(y\right),F\circ\Phi_{a}\left(x\right)\right\rangle =\left\langle y,x\right\rangle ,\,x,y\in B_{d}\cdot
\]
 Therefore, there exists a unitary $U:\mathbb{C}^{d}\longrightarrow\mathbb{C}^{d}$
s.t. $F\circ\Phi_{a}=U$ on $B_{d}$. Hence, $F=U\circ\Phi_{a}$.
That is, $F\in{\bf Aut\left(\mathit{B_{d}}\right)}$.
\end{proof}
\begin{cor}
Let $F\in{\bf Aut\left(\mathit{B_{d}}\right)}$. Then, $\left\Vert F\right\Vert _{Mult\left(B_{d}\right)}=1\cdot$
\end{cor}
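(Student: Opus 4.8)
\section*{Proof proposal}

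The plan is to read the statement off Corollary \ref{Corollary 3.12}. First I would check that the hypotheses of that corollary are automatic here: an automorphism $F\in\mathbf{Aut}(B_{d})$ is in particular a vector valued multiplier on $B_{d}$ (by Theorem \ref{Theorem 3.2} each coordinate of $F=U\Phi_{a}$ is a polynomial multiple of the kernel function $k_{a}$, hence lies in $\mathcal{M}_{d}$ since $\mathcal{M}_{d}$ is an algebra containing the polynomials and the kernel functions), and, setting $a:=F^{-1}(0)\in B_{d}$, we have $F(a)=0$. So Corollary \ref{Corollary 3.12} applies and yields
\[
M_{F}M_{F}^{*}=\mathbf{Id}-P_{H_{\{a\}}}.
\]

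Next I would identify the right-hand side. Since $H_{\{a\}}=\overline{\operatorname{span}\{k_{a}\}}=\mathbb{C}k_{a}$ is one-dimensional, $P_{H_{\{a\}}}$ is a rank-one orthogonal projection, so $\mathbf{Id}-P_{H_{\{a\}}}$ is the orthogonal projection of $\mathcal{H}_{d}^{2}$ onto $I_{\{a\}}=H_{\{a\}}^{\perp}$ (Proposition \ref{Proposition 2.2.3}). Because $\mathcal{H}_{d}^{2}$ is infinite-dimensional while $H_{\{a\}}$ is one-dimensional, this complement is nonzero, hence $\mathbf{Id}-P_{H_{\{a\}}}$ is a non-trivial orthogonal projection and therefore has operator norm exactly $1$.

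Combining the two observations, $\|M_{F}\|^{2}=\|M_{F}M_{F}^{*}\|=\|\mathbf{Id}-P_{H_{\{a\}}}\|=1$, so $\|F\|_{Mult(B_{d})}=\|M_{F}\|=1$, as claimed. I do not expect a genuine obstacle: the whole content is carried by Corollary \ref{Corollary 3.12}, and the only points needing a word of care are that $F(a)=0$ holds automatically for an automorphism and that a non-trivial orthogonal projection has norm $1$ (which gives the exact value rather than merely $\|F\|_{Mult(B_{d})}\le 1$). As an alternative for the lower bound one could instead note $\|F\|_{Mult(B_{d})}\ge\|F\|_{\infty}=\sup_{x\in B_{d}}\|F(x)\|=1$ using that the automorphism $F$ maps $B_{d}$ onto $B_{d}$, together with $\|M_{F}^{*}k_{x}\|=\|F(x)\|\,\|k_{x}\|$ from Proposition \ref{Proposition 3.11}.
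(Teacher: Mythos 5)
Your proposal is correct and follows essentially the same route as the paper: the paper's proof also sets $a=F^{-1}(0)$ and invokes Corollary \ref{Corollary 3.12} to conclude that $M_{F}M_{F}^{*}={\bf Id}-P_{H_{\{a\}}}$ is a (non-trivial) orthogonal projection, hence of norm $1$. Your write-up merely spells out the details the paper leaves implicit, such as $F(a)=0$ and the non-triviality of the projection.
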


\begin{proof}
Let $a=F^{-1}\left(0\right)$. Then, by Corollary \ref{Theorem 713},
$M_{F}M_{F}^{*}$ is a projection.
\end{proof}

\subsection{The Multiplier Norm of The $d$ Shift}
\begin{thm}
\label{Theorem 5.200}( \cite[Theorem 8.2.2]{rudin2008function}).
Let $\mathit{F:B_{d}\longrightarrow B_{d}}$ be holomorphic with $F\left(0\right)=0.$Then,
$F$ and the linear operator $D_{F}\left(0\right)$ fix the same points
of $B_{d}$.
\end{thm}

\begin{thm}
\label{Theorem 06.15}Let $V\subseteq B_{d}$ be a variety that contains
at least two points. Assume that $0\in V$. Then, 
\end{thm}

\begin{enumerate}
\item for every vector valued multiplier $F:B_{d}\longrightarrow\mathbb{C}^{d}$
such that ${\bf \mathit{Z}\mid_{\mathit{V}}}=F\mid_{V}$ with $\left\Vert {\bf \mathit{Z}\mid_{\mathit{V}}}\right\Vert _{Mult\left(V\right)}=\left\Vert F\right\Vert _{Mult\left(B_{d}\right)}$,
we have 
\[
F\mid_{B_{d}\cap\textrm{span}V}={\bf \mathit{Z}}\mid_{B_{d}\cap\textrm{span}V}\cdot
\]
\item $\left\Vert {\bf \mathit{Z}\mid_{\mathit{V}}}\right\Vert _{Mult\left(V\right)}=1\cdot$
\end{enumerate}
\begin{proof}
Let $V\subseteq B_{d}$ be a variety such that $\left|V\right|\geq2$
and  $0\in V$. 
\begin{enumerate}
\item Let a vector valued multiplier $F:B_{d}\longrightarrow\mathbb{C}^{d}$
such that ${\bf \mathit{Z}\mid_{\mathit{V}}}=F\mid_{V}$ with 
\[
\left\Vert {\bf \mathit{Z}\mid_{\mathit{V}}}\right\Vert _{Mult\left(V\right)}=\left\Vert F\right\Vert _{Mult\left(B_{d}\right)}\cdot
\]
It follows that 
\[
\left\Vert F\right\Vert _{\mathit{\infty}}\leq\left\Vert F\right\Vert _{Mult\left(B_{d}\right)}=\left\Vert {\bf \mathit{Z\mathit{\mid_{V}}}}\right\Vert _{Mult\left(V\right)}\leq\left\Vert \mathit{Z}\right\Vert _{Mult\left(B_{d}\right)}=1\cdot
\]
Therefore, $\left\Vert F\right\Vert _{\infty}\leq1$. Now, since $V$
contains at least two points and ${\bf \mathit{Z}\mid_{\mathit{V}}}=F\mid_{V}$
, $F$ is not a constant function. Therefore, by the maximum principle,
$F$ maps $B_{d}$ into $B_{d}$. Note that $F:B_{d}\longrightarrow B_{d}$
is a holomorphic function with $F\left(0\right)=0.$ Therefore, by
Theorem \ref{Theorem 5.200}, $F$ and the linear operator $D_{F}\left(0\right)$
fix the same points of $B_{d}$. In particular, $D_{F}\left(0\right)\mid_{V}=F\mid_{V}={\bf \mathit{Z}\mathit{\mid_{V}}}$.
Since $D_{F}\left(0\right)$ is linear, it follows that $D_{F}\left(0\right)\mid_{\mathit{B_{d}\cap\textrm{span}}V}={\bf \mathit{Z}\mathit{\mid_{B_{d}\cap\textrm{span}V}}}$.
Again, by Theorem \ref{Theorem 5.200}, it follows that 
\[
F\mid_{B_{d}\cap\textrm{span}V}={\bf \mathit{Z}}\mid_{B_{d}\cap\textrm{span}V}\cdot
\]
\item By Proposition \ref{Proposition 1.14}, there exists a vector valued
multiplier $F:B_{d}\longrightarrow\mathbb{C}^{d}$ such that ${\bf \mathit{Z}\mid_{\mathit{V}}}=F\mid_{V}$
with $\left\Vert {\bf \mathit{Z}\mid_{\mathit{V}}}\right\Vert _{Mult\left(V\right)}=\left\Vert F\right\Vert _{Mult\left(B_{d}\right)}$.
By $\left(1\right)$, we have $F\mid_{B_{d}\cap\textrm{span}V}={\bf \mathit{Z}}\mid_{B_{d}\cap\textrm{span}V}$.
Therefore,
\begin{eqnarray*}
\left\Vert F\right\Vert _{Mult\left(B_{d}\right)} & = & \left\Vert {\bf \mathit{Z}\mid_{\mathit{V}}}\right\Vert _{Mult\left(V\right)}\\
 & \leq & \left\Vert {\bf \mathit{Z}\mathit{\mid_{B_{d}\cap\textrm{span}V}}}\right\Vert _{Mult\left(B_{d}\cap\textrm{span}V\right)}\\
 & = & \left\Vert F\mid_{B_{d}\cap\textrm{spanV}}\right\Vert _{Mult\left(B_{d}\cap\textrm{span}V\right)}\\
 & \leq & \left\Vert F\right\Vert _{Mult\left(B_{d}\right)}\cdot
\end{eqnarray*}
Hence, $\left\Vert {\bf \mathit{Z}\mid_{\mathit{V}}}\right\Vert _{Mult\left(V\right)}=\left\Vert {\bf \mathit{Z}\mathit{\mid_{B_{d}\cap\textrm{span}V}}}\right\Vert _{Mult\left(B_{d}\cap\textrm{span}V\right)}=1$.
\end{enumerate}
\end{proof}
\begin{thm}
\label{Thm 07.15}Let $V\subseteq B_{d}$ be a variety that contains
at least two points . Then, 
\[
\left\Vert {\bf \mathit{Z}\mid_{\mathit{V}}}\right\Vert _{Mult\left(V\right)}=1\cdot
\]
\end{thm}

\begin{proof}
Let $a\in V$ and let $\Phi_{a}$ be the elementary automorphism of
$B_{d}$ such that $\Phi_{a}\left(a\right)=0$. Therefore, $0\in\Phi_{a}\left(V\right)$.
By Proposition \ref{Proposition 1.14}, there exists a vector valued
multiplier $F$ on $B_{d}$ such that 
\[
{\bf \mathit{Z}\mid_{\mathit{V}}}=F\mid_{V}
\]
 with $\left\Vert {\bf \mathit{Z}\mid_{\mathit{V}}}\right\Vert _{Mult\left(V\right)}=\left\Vert F\right\Vert _{Mult\left(B_{d}\right)}$.
Therefore, 
\[
\left\Vert F\right\Vert _{Mult\left(B_{d}\right)}=\left\Vert Z\mid_{V}\right\Vert \leq\left\Vert Z\right\Vert _{Mult\left(B_{d}\right)}=1\cdot
\]
 It follows that $F$ maps $B_{d}$ into $B_{d}$. Moreover, 
\[
\Phi_{a}\circ F\circ\Phi_{a}\mid_{\Phi_{a}\left(V\right)}=Z\mid_{\Phi_{a}\left(V\right)}\cdot
\]
 Since $0\in\Phi_{a}\left(V\right)$, and $\Phi_{a}\circ F\circ\Phi_{a}\left(0\right)=0,$
by Theorem \ref{Theorem 5.200}, it follows that 
\[
\Phi_{a}\circ F\circ\Phi_{a}\mid_{B_{d}\cap\textrm{span}\Phi_{a}\left(V\right)}=Z\mid_{B_{d}\cap\textrm{span}\Phi_{a}\left(V\right)}\cdot
\]
 Therefore, 
\[
F\circ\Phi_{a}\mid_{B_{d}\cap\textrm{span}\Phi_{a}\left(V\right)}=\Phi_{a}\mid_{B_{d}\cap\textrm{span}\Phi_{a}\left(V\right)}\cdot
\]
By \cite[Proposition 2.4.2]{rudin2008function}, automorphisms of
the ball map affine sets to affine sets. It follows that 

\[
\left\Vert F\circ\Phi_{a}\right\Vert _{Mult\left(B_{d}\right)}\geq\underset{x\in B_{d}\cap\textrm{span}\Phi_{a}\left(V\right)}{\sup}\left\Vert \Phi_{a}\left(x\right)\right\Vert =1\cdot
\]
 Since $\Phi_{a}$ is an automorphism, it follows that the map 
\begin{eqnarray*}
M_{d}\left(\mathcal{M}_{d}\right) & \to & M_{d}\left(\mathcal{M}_{d}\right)\\
F & \to & F\circ\varPhi_{a}
\end{eqnarray*}
is an isometry. Therefore, 
\[
\left\Vert F\right\Vert _{Mult\left(B_{d}\right)}=\left\Vert F\circ\varPhi_{a}\right\Vert _{Mult\left(B_{d}\right)}\geq1\cdot
\]
But, $\left\Vert F\right\Vert _{Mult\left(B_{d}\right)}\leq1$. Therefore,
$\left\Vert F\right\Vert _{Mult\left(B_{d}\right)}=1\cdot$
\end{proof}

\section{Multiplier Algebra Isomorphisms and the Induced Multiplier Biholomorphisms\label{sec:Norm-Multiplier-algebra-Isomorphisms}}
\begin{claim}
\label{claim: 8.1}Let $V,W$ be two varieties in $B_{d}$. Suppose
that $\varphi:\mathcal{M_{\mathit{V}}}\rightarrow\mathcal{M_{\mathit{W}}}$
is a multiplier algebra isomorphism. Let $F=\left(f_{1},\ldots,f_{d}\right)$
be a vector valued multiplier on $V$. Then, $F\circ G_{\varphi}$
is a vector valued multiplier on $W$ with 
\[
\left\Vert F\circ G_{\varphi}\right\Vert _{Mult\left(W\right)}\leq\left\Vert \varphi^{\left(d\right)}\right\Vert \left\Vert F\right\Vert _{Mult\left(V\right)}\cdot
\]
\end{claim}

\begin{proof}
Note that 
\[
F\circ G_{\varphi}=\left(f_{1}\circ G_{\varphi},\ldots,f_{d}\circ G_{\varphi}\right)=\left(\varphi\left(f_{1}\right),\ldots,\varphi\left(f_{d}\right)\right)\cdot
\]
Therefore, $F\circ G_{\varphi}$ is a vector valued multiplier on
$W$ and 
\[
\left\Vert F\circ G_{\varphi}\right\Vert _{Mult\left(W\right)}\leq\left\Vert \varphi^{\left(d\right)}\right\Vert \left\Vert F\right\Vert _{Mult\left(V\right)}\cdot
\]
\end{proof}
From claim \ref{claim: 8.1} we obtain the following propostion:
\begin{prop}
\label{Proposition 8.2}Let $V,W$ be two varieties in $B_{d}$. Suppose
that $\varphi,\psi:\mathcal{M_{\mathit{V}}}\rightarrow\mathcal{M_{\mathit{W}}}$
are multiplier algebra isomorphisms. Then, $G_{\psi^{-1}}\circ G_{\varphi}$
is a multiplier biholomorphism of $W$ onto $W$, and
\[
\left\Vert G_{\psi^{-1}}\circ G_{\varphi}\right\Vert _{Mult\left(W\right)}\leq\left\Vert \varphi^{\left(d\right)}\right\Vert \left\Vert G_{\psi^{-1}}\right\Vert _{Mult\left(V\right)}\cdot
\]
 In fact, $\varphi\circ\psi^{-1}:\mathcal{M_{\mathit{W}}}\rightarrow\mathcal{M_{\mathit{W}}}$
is an algebra isomorphism, which is given by 
\[
\varphi\circ\psi^{-1}\left(g\right)=g\circ G_{\psi^{-1}}\circ G_{\varphi},\quad g\in\mathcal{M}_{W}\cdot
\]
\end{prop}

\begin{thm}
\label{Theorem 7.3}Let $V$ and $W$ be two varieties in $B_{d}$.
Assume that $V$ contains at least two points, and that there exists
a multiplier algebra isomorphism 
\[
\varphi:\mathcal{M_{\mathit{V}}}\rightarrow\mathcal{M_{\mathit{W}}}\cdot
\]
Then,
\end{thm}

\[
\frac{1}{\left\Vert \left(\varphi^{-1}\right)^{\left(d\right)}\right\Vert }\leq\left\Vert G_{\varphi}\right\Vert _{Mult\left(W\right)}\leq\left\Vert \varphi^{\left(d\right)}\right\Vert \cdot
\]

\begin{proof}
Follows from Theorem \ref{Thm 07.15} and Proposition \ref{Proposition 8.2}. 
\end{proof}
\begin{cor}
\label{Corollary 7.4}Let $V$and $W$ be two varieties in $B_{d}$.
Suppose that there exists a RKHS isomorphism $T:H_{V}\longrightarrow H_{W}$
determined by
\[
T\left(k_{x}\right)=\lambda\left(x\right)k_{F(x)},\quad x\in V.
\]
Then, $T$ induces a multiplier algebra isomorphism $\varphi:\mathcal{M_{\mathit{V}}}\rightarrow\mathcal{M_{\mathit{W}}}$,
which is given by 
\[
\varphi\left(f\right)=f\circ F^{-1},\quad f\in\mathcal{M}_{V}\cdot
\]
Moreover, if $V$contains at least two points, then
\end{cor}

\[
\frac{1}{\left\Vert \varphi^{-1}\right\Vert _{cb}}\leq\frac{1}{\left\Vert \left(\varphi^{-1}\right)^{\left(d\right)}\right\Vert }\leq\left\Vert F^{-1}\right\Vert _{Mult\left(W\right)}\leq\left\Vert \varphi^{\left(d\right)}\right\Vert \leq\left\Vert \varphi\right\Vert _{cb}\leq\left\Vert T\right\Vert \left\Vert T^{-1}\right\Vert \cdot
\]

\begin{proof}
The assertion follows by Proposition \ref{Proposition 1.29}and Theorem
\ref{Theorem 7.3}.
\end{proof}
\begin{thm}
\label{Theorem 7.5}If $F:V\longrightarrow W$ is a multiplier biholomorphism
of $V$onto $W$, with $\left|V\right|\geq2$.\textbf{ }Then, the
following are equivalent.
\end{thm}

\begin{enumerate}
\item $F$ is a restriction of an automorphism of $B_{d}$ to $V$.
\item $\left\Vert F\right\Vert _{Mult\left(V\right)}=1$ and $\left\Vert F^{-1}\right\Vert _{Mult\left(W\right)}=1$.
\end{enumerate}
\begin{proof}
If $F:V\longrightarrow W$ is a restriction of an automorphism $B_{d}$,
then by \cite[Proposition 4.1]{davidson2015operator} the map $\varphi:\mathcal{M_{\mathit{V}}}\rightarrow\mathcal{M_{\mathit{W}}}$,
which is given by 
\[
\varphi\left(f\right)=f\circ F^{-1},\quad f\in\mathcal{M}_{V},
\]
 is a unital completely isometric multiplier algebra isomorphism.
That is, $\left\Vert \varphi\right\Vert _{cb}=1$ and $\left\Vert \varphi^{-1}\right\Vert _{cb}=1$.
Therefore, by Theorem \ref{Theorem 7.3} it follows that $\left\Vert F\right\Vert _{Mult\left(V\right)}=1$
and $\left\Vert F^{-1}\right\Vert _{Mult\left(W\right)}=1$. For the
converse, see the proof of \cite[Lemma 4.4+Theorem 4.5]{davidson2015operator}.
\end{proof}
For $\lambda\in\mathbb{T}$, let $U_{\lambda}:\mathbb{C^{\mathit{d}}}\longrightarrow\mathbb{C^{\mathit{d}}}$
be the unitary defined by $U_{\lambda}z=\lambda z$ for all $z\in\mathbb{C}^{d}$.
\begin{prop}
\label{Proposition 7.6}Let $V$and $W$ be two homogeneous varieties
in $B_{d}$. Suppose that $\varphi:\mathcal{M_{\mathit{V}}}\rightarrow\mathcal{M_{\mathit{W}}}$
is an algebra isomorphism. Then, there are $\lambda,\mu\in\mathbb{T}$
such that the biholomorphism
\[
G_{\varphi^{-1}}\circ U_{\lambda}\circ G_{\varphi}\circ U_{\mu}\circ G_{\varphi^{-1}}:V\to W
\]
fixes the origin. Moreover, there is a linear map $A:\mathbb{C^{\mathit{d}}}\longrightarrow\mathbb{C^{\mathit{d}}}$
such that 
\[
A\mid_{V}=G_{\varphi^{-1}}\circ U_{\lambda}\circ G_{\varphi}\circ U_{\mu}\circ G_{\varphi^{-1}}\cdot
\]
\end{prop}

\begin{proof}
This follows by a standard application of the \textquotedbl disc trick\textquotedbl{}
(See \cite[Proposition 4.7]{davidson2011isomorphism} and \cite[Lemma 9.6]{hartz2017isomorphism}).
The moreover part follows from \cite[Theorem 7.4]{davidson2011isomorphism}.
\end{proof}
\begin{thm}
\label{Theorem 7.7}Let $V$and $W$ be two homogeneous varieties
in $B_{d}$. Suppose that $\varphi:\mathcal{M_{\mathit{V}}}\rightarrow\mathcal{M_{\mathit{W}}}$
is an algebra isomorphism. Then, there is a linear map $A:\mathbb{C^{\mathit{d}}}\longrightarrow\mathbb{C^{\mathit{d}}}$
such that $A\left(V\right)=W$ with
\[
\left\Vert A\right\Vert _{Mult\left(V\right)}\leq\left\Vert \left(\varphi^{-1}\right)^{\left(d\right)}\right\Vert ^{2}\left\Vert \varphi^{\left(d\right)}\right\Vert \cdot
\]
\end{thm}

\begin{proof}
By Proposition, it follows that there is an invertible linear map
$A:\mathbb{C^{\mathit{d}}}\longrightarrow\mathbb{C^{\mathit{d}}}$
with $A\left(V\right)=W$, and $\lambda,\mu\in\mathbb{T}$ such that
\[
A\mid_{V}=G_{\varphi^{-1}}\circ U_{\lambda}\circ G_{\varphi}\circ U_{\mu}\circ G_{\varphi^{-1}},
\]
where $U_{\lambda},U_{\mu}:\mathbb{C^{\mathit{d}}}\longrightarrow\mathbb{C^{\mathit{d}}}$
are the unitaries defined above. Applying Claim \ref{claim: 8.1}
yields that
\begin{align*}
\left\Vert A\right\Vert _{Mult\left(V\right)} & \leq\left\Vert \left(\varphi^{-1}\right)^{\left(d\right)}\right\Vert \left\Vert G_{\varphi^{-1}}\circ U_{\lambda}\circ G_{\varphi}\circ U_{\mu}\right\Vert _{Mult\left(W\right)}\\
 & =\left\Vert \left(\varphi^{-1}\right)^{\left(d\right)}\right\Vert \left\Vert G_{\varphi^{-1}}\circ U_{\lambda}\circ G_{\varphi}\right\Vert _{Mult\left(W\right)}\\
 & \leq\left\Vert \left(\varphi^{-1}\right)^{\left(d\right)}\right\Vert \left\Vert \varphi^{\left(d\right)}\right\Vert \left\Vert G_{\varphi^{-1}}\circ U_{\lambda}\right\Vert _{Mult\left(V\right)}\\
 & =\left\Vert \left(\varphi^{-1}\right)^{\left(d\right)}\right\Vert \left\Vert \varphi^{\left(d\right)}\right\Vert \left\Vert G_{\varphi^{-1}}\right\Vert _{Mult\left(V\right)}\\
 & \leq\left\Vert \left(\varphi^{-1}\right)^{\left(d\right)}\right\Vert ^{2}\left\Vert \varphi^{\left(d\right)}\right\Vert \cdot
\end{align*}
\end{proof}
\section*{Acknowledgements}
This paper is based on the research thesis entitled "Deformations of Reproducing Kernel Hilbert Spaces on Homogeneous Varieties," submitted to the Senate of the Technion-Israel Institute of Technology in partial fulfillment of the requirements for the degree of Doctor of Philosophy. I gratefully acknowledge the financial support provided by the Technion during my Ph.D. studies. I am deeply thankful to my supervisor, Professor Orr Moshe Shalit, for his guidance, support, and insightful advice throughout the course of this research. I would also like to express my sincere thanks and gratitude to my parents for their deep love and support.

\bibliographystyle{plain}

\end{document}